\setlist[enumerate]{label = (\alph*), ref=(\text{\alph*)}}
\setlist[itemize]{nolistsep}
\renewcommand{\Im}{\mathop{\text{Im}}}
\renewcommand{\phi}{\varphi}
\renewcommand{\ge}{\geqslant}
\renewcommand{\le}{\leqslant}
\newcommand{\KK}{\mathbb{K}}
\renewcommand{\AA}{\mathbb{A}}
\newcommand{\QQ}{\mathbb{Q}}
\newcommand{\ZZ}{\mathbb{Z}}
\newcommand{\GG}{\mathbb{G}}
\newcommand{\TT}{\mathbb{T}}
\newcommand{\zero}{\mathbf{0}}
\def\fg{{\mathfrak g}}
\def\ft{{\mathfrak t}}
\def\fu{{\mathfrak u}}
\newcommand{\NN}{\mathbb{Z}_{>0}}
\newcommand{\Zgezero}{\mathbb{Z}_{\geqslant 0}}
\newcommand{\falpha}{\pmb{\alpha}}
\newcommand{\fbeta}{\pmb{\beta}}
\DeclareMathOperator{\Ker}{Ker}
\DeclareMathOperator{\GL}{GL}
\DeclareMathOperator{\Ad}{Ad}
\DeclareMathOperator{\Aut}{Aut}
\DeclareMathOperator{\id}{id}
\DeclareMathOperator{\Spec}{Spec}
\DeclareMathOperator{\Hom}{Hom}
\DeclareMathOperator{\diag}{diag}
\DeclareMathOperator{\cone}{cone}
\theoremstyle{plain}
\newtheorem{lemma}{Lemma}%[section]% -- это делает нумерацию 2.1, 2.2, ..., 2.50, 3.1, 3.2, ....
\newtheorem{proposition}{Proposition}
\newtheorem{theorem}{Theorem}
\newtheorem{corollary}{Corollary}
\theoremstyle{definition}
\newtheorem{definition}{Definition}
\newtheorem{example}{Example}
\theoremstyle{remark}
\newtheorem{remark}{Remark}
\begin{document}

\title[Affine monoids of corank one]{Affine monoids of corank one}
\author{Yulia Zaitseva}
\address{HSE University, Faculty of Computer Science, Pokrovsky Boulvard 11, Moscow, 109028 Russia}
\email{yuliazaitseva@gmail.com}

%\thanks{\emph{Address}: HSE University, Faculty of Computer Science, Pokrovsky Boulvard 11, Moscow, 109028 Russia\\\indent \emph{E-mail}: yuliazaitseva@gmail.com}

\thanks{Supported by the RSF-DST grant 22-41-02019}

\subjclass[2010]{Primary 20M32, 14M25, \ Secondary 14R20, 14R05}

\keywords{Algebraic variety, algebraic group, algebraic monoid, toric variety, group embedding, locally nilpotent derivation, semidirect product, idempotents, center}

\begin{abstract}
We give a classification of noncommutative algebraic monoid structures on normal affine varieties such that the group of invertible elements of the monoid is connected, solvable, and has a one-dimensional unipotent radical. We describe the set of idempotents and the center of such a monoid and give a criterion for existence of the zero element. 
\end{abstract}

\maketitle

%\tableofcontents

%%%%%%%%%%%%%%%%%%

\section{Introduction}

An (affine) algebraic monoid is an irreducible (affine) algebraic variety $X$ with an associative multiplication 
$\mu \colon X\times X\to X,\quad (x,y)\mapsto x*y$, 
which is a morphism of algebraic varieties and admits a unity $1\in X$ such that ${1*x=x*1=x}$ for all $x\in X$. The group of invertible elements $G(X)$ of an algebraic monoid $X$ is an algebraic group, which is Zariski open in~$X$. According to~\cite[Theorem~3]{Ri2}, every algebraic monoid $X$ with an affine group of invertible elements $G(X)$ is an affine monoid. For more information on algebraic monoids, see~\cite{Pu1988,Re2005,Ri1,Vi}. 

An affine algebraic monoid $X$ is called reductive (solvable, commutative) if $G(X)$ is a reductive (solvable, commutative) affine algebraic group. The most developed is the theory of reductive monoids, see e.g. the combinatorial classification of reductive monoids in~\cite{Vi,Ri1}. 

By the (co)rank of a monoid $X$ we mean the (co)dimension of a maximal torus in~$G(X)$. Let the ground field $\KK$ be algebraically closed and of characteristic zero. In the commutative case, the group $G(X)$ splits into the direct product of an algebraic torus $(\KK^\times)^r$ and a commutative unipotent group $\GG_a^s$, where $\GG_a = (\KK, +)$ is the additive group of the ground field. 

In~\cite{ABZ2018}, commutative monoids on affine spaces are studied. In particular, \cite[Proposition~1]{ABZ2018} gives a classification of commutative monoid structures of ranks $0$,~$n-1$, and~$n$ on~$\AA^n$. This implies a classification of commutative monoids on~$\AA^1$ and $\AA^2$, and \cite[Theorem~1]{ABZ2018} provides a classification on~$\AA^3$. In~\cite{GvSe2022}, these classifications are extended to non-closed ground fields of characteristic zero. 

In~\cite{DzZa2021}, a classification of commutative monoid structures of rank $0$, $n-1$, and~$n$ on normal affine varieties is obtained. It turns out that such an affine algebraic variety is toric, and structures of corank one are described by Demazure roots of the variety. The classification is given in terms of comultiplications $\mu^*\colon \KK[X] \to \KK[X] \otimes \KK[X]$:
\[\chi^u \mapsto \chi^u \otimes \chi^u (1 \otimes \chi^e + \chi^e \otimes 1)^{\langle p, u\rangle},\] 
where $p$ is the primitive vector on a ray of the cone of $X$ and $e$ is a Demazure root corresponding to this ray, see Section~\ref{prelim_sec} for definitions. In particular, it implies the classification of commutative monoid structures of any rank for normal affine surfaces. 

In~\cite{Bi2021}, a classification of noncommutative monoids on affine surfaces is given. In this case, $G(X)$ is a semidirect product of $\KK^\times$ and $\GG_a$. It is proved that the surface is toric and the comultiplication has the form
\begin{equation}
\label{comult_intr_eq}
\chi^u \mapsto \chi^u \otimes \chi^u (1 \otimes \chi^{e_1} + \chi^{e_2} \otimes 1)^{\langle p, u\rangle},
\end{equation}
where $e_1, e_2$ are two Demazure roots corresponding to the same ray of the cone of~$X$ with primitive vector~$p$. 

In this work, we obtain a classification of noncommutative affine monoids of corank one on normal varieties of an arbitrary dimension. This result generalizes classifications in~\cite{DzZa2021} and~\cite{Bi2021}. Let $X$ be such a monoid. In Proposition~\ref{toric_theor} we show that $X$ is toric. It was proved earlier in~\cite{Bi2021} for affine monoids of dimension~2. 
Then we get a classification of monoid structures of corank one in terms of comultiplications and toric geometry in Theorem~\ref{classif_theor}, see Section~\ref{proofclass_sec} for a proof. It turns out that formula~\eqref{comult_intr_eq} works in arbitrary dimension. Theorem~\ref{affspace_theor} is the specialization of the classification to affine spaces. In Theorem~\ref{idemp_theor}, we obtain a description of idempotents in~$X$. In contrast to the commutative case, the number of idempotents may be infinite under some conditions on Demazure roots~$e_1, e_2$ and the cone~$\sigma$ of the toric variety. In Proposition~\ref{idemp_geom_prop}, we study geometry of the subvariety~$E(X)$ of all idempotents in~$X$. In particular, we show that any irreducible component of $E(X)$ is either isomorphic to the affine line or is an isolated point. In Proposition~\ref{zero_prop}, it is proved that the monoid~$X$ admits the zero element if and only if $\sigma^\perp=0$ and $-e_1,-e_2\notin\sigma^\vee$. 
In Proposition~\ref{centerX_prop}, the system of equations defining the center $Z(X)$ of~$X$ is obtained. It follows that $\dim Z(X) = \dim X-2$; moreover, all irreducible components have the maximal dimension if $X$ is an affine space; see Corollaries~\ref{dimZ_cor} and~\ref{centerAn_cor}. Finally, the connection between irreducible components of~$E(X)$ and the center is given in Proposition~\ref{idempcenter_prop}. 

The author is grateful to Ivan Arzhantsev for valuable suggestions, attention to this work and permanent support, and to Roman Avdeev for useful comments. Special thanks are due to the anonymous reviewer for substantial corrections and remarks. 

%%%%%%%%%%%%%%%%%%%%%%%%%%%%%%%%%%%%%%%%%%%%%%%%%%%%%%%%%

\section{Semidirect products and the toric structure}

\begin{definition}
An (affine) irreducible normal algebraic variety~$X$ with a morphism ${X \times X \to X, \, (x,y) \mapsto x*y}$, is called an \emph{(affine) algebraic monoid} if $x*(y*z) = (x*y)*z$ for all $x,y,z \in X$ and there exists a point $1 \in X$ called \emph{unity} such that $1 * x = x * 1 = x$. 
\end{definition}

Let $X$ be an affine monoid of dimension~$n$. Then the group~$G(X)$ of invertible elements of~$X$ is a connected affine algebraic group, which is Zariski open in~$X$, see~\cite[Theorem~1]{Ri1} and~\cite[Theorem~5]{Ri2}. Recall that the \emph{rank} and the \emph{corank} of an affine algebraic group~$G$ are the dimension and the codimension of a maximal torus in~$G$, respectively. By the rank and the corank of a monoid~$X$ we mean the rank and the corank of the group $G(X)$, respectively. 

The aim of the present work is to study affine monoids of corank one. In this case, $G(X)$ has no semisimple part since any root subgroup in a semisimple group occurs together with an opposite one. 
So $G(X)$ has a one-dimensional unipotent radical and $G(X) = \GG_a \leftthreetimes T$, where $\GG_a = (\KK,+)$ and $T$ is a torus of dimension~$n-1$. 

By definition of a semidirect product, the group operation in~$G(X)$ is defined by a homomorphism $\chi\colon T \to \Aut(\GG_a) = \KK^\times$:
\begin{equation} \label{Gchi_mult_eq}
(\alpha, t] \cdot (\alpha', t'] = (\alpha + \chi(t)\alpha', \, tt'],
\end{equation}
where $\alpha, \alpha' \in \GG_a$ and $t, t' \in T$. 
We denote this group $G_\chi$. Thus 
\[G_\chi = \GG_a \leftthreetimes T,\]
where $T$ is an algebraic torus of dimension~$n-1$ and the multiplication is defined by~\eqref{Gchi_mult_eq}.

Notice that according to~\eqref{Gchi_mult_eq} we have $(\alpha, t] = (\alpha, 1](0, t] = \alpha \cdot t$, where $\alpha\in \GG_a$, $t \in T$. There is also a dual way to write the elements of~$G_\chi$. Namely, for an element $(\alpha, t] \in G_\chi$ consider $\beta = \chi^{-1}(t)\alpha \in \GG_a$. Then $t \cdot \beta = (0, t](\beta, 1] = (\chi(t)\beta, t] = (\alpha, t]$. Thus for any element $g \in G_\chi$ we have two decompositions
\[g = (\alpha, t] = \alpha \cdot t = [t, \beta) = t \cdot \beta.\]
The dual multiplication rule is
\[[t, \beta) \cdot [t', \beta') = [tt', \chi(t')^{-1}\beta + \beta').\]

The group $G_\chi$ is commutative if and only if $\chi=0$. Let us denote the center of a group~$G$ by $Z(G)$.

\begin{lemma}
\label{centerG_lem}
If $\chi \ne 0$, then $Z(G_\chi) = \{0\} \times \Ker \chi$. 
\end{lemma}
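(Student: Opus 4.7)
The plan is to read off the center directly from the multiplication rule~\eqref{Gchi_mult_eq}. An element $(\alpha, t] \in G_\chi$ lies in $Z(G_\chi)$ precisely when $(\alpha, t] \cdot (\alpha', t'] = (\alpha', t'] \cdot (\alpha, t]$ for every $(\alpha', t'] \in G_\chi$. Expanding both sides via~\eqref{Gchi_mult_eq}, the torus components automatically agree since $T$ is commutative, so the centralizer condition collapses to the single scalar equation
\[\alpha + \chi(t)\alpha' = \alpha' + \chi(t')\alpha \quad \text{for all } \alpha' \in \GG_a \text{ and all } t' \in T.\]

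I would then extract the two necessary constraints by specialization. Setting $t' = 1$ forces $\chi(t)\alpha' = \alpha'$ for every $\alpha' \in \GG_a$, hence $\chi(t) = 1$, that is, $t \in \Ker \chi$. Setting $\alpha' = 0$ forces $\alpha = \chi(t')\alpha$ for every $t' \in T$, and this is exactly where the hypothesis $\chi \ne 0$ enters: a nontrivial character of a torus must take some value different from $1$, so $\alpha = 0$. This yields the inclusion $Z(G_\chi) \subseteq \{0\} \times \Ker \chi$.

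For the reverse inclusion, I would verify directly that every element $(0, t]$ with $t \in \Ker \chi$ is central: the product $(0, t] \cdot (\alpha', t']$ equals $(\chi(t)\alpha', tt'] = (\alpha', tt']$, while $(\alpha', t'] \cdot (0, t]$ equals $(\alpha' + \chi(t')\cdot 0, t't] = (\alpha', t't]$, and these agree by commutativity of $T$. There is no real obstacle — the whole argument is a short manipulation of~\eqref{Gchi_mult_eq} — and the only subtle point worth flagging is that the hypothesis $\chi \ne 0$ is exactly what forces the $\GG_a$-component of a central element to vanish; without it, $G_\chi = \GG_a \times T$ is abelian and the conclusion would fail.
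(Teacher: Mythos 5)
Your proof is correct and follows essentially the same route as the paper: write out the centralizer condition from~\eqref{Gchi_mult_eq} and specialize (the paper uses $\alpha'=0,1$ together with a $t'$ satisfying $\chi(t')\ne 1$, while you use $t'=1$ and $\alpha'=0$, which is an equally valid choice of specializations). The verification of the reverse inclusion is also the same direct computation.
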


\begin{proof}
Let $(\alpha, t] \in Z(G_\chi)$, i.e. 
\begin{equation} \label{centerGchi_eq}
(\alpha + \chi(t)\alpha', \, tt'] = (\chi(t')\alpha + \alpha', \, t't]
\end{equation}
for any $\alpha' \in \GG_a$ and $t' \in T$. 
Since there exists $t' \in T$ with $\chi(t') \ne 1$, condition~\eqref{centerGchi_eq} for $\alpha'=0,1$ implies $\alpha = 0$ and $\chi(t) = 1$. Conversely, for $(\alpha, t] \in \{0\} \times \Ker \chi$ equation~\eqref{centerGchi_eq} turns into $(\alpha', \, tt'] = (\alpha', \, t't]$. 
\end{proof}

\begin{definition}
A \emph{group embedding} of an algebraic group $G$ is an irreducible algebraic variety~$X$ with an open embedding $G \hookrightarrow X$ such that the action of $G \times G$ on~$G$ by left and right multiplication can be extended to an action of~$G\times G$ on~$X$. 
\end{definition}

If $X$ is a monoid, then $G(X) \hookrightarrow X$ is a group embedding. For affine monoids the converse is also true, i.e. if $X$ is an affine group embedding of~$G$, then the multiplication on~$G$ extends to a multiplication $X \times X \to X$ in such a way that $G$ is the group $G(X)$ of invertible elements of~$X$, see~\cite[Theorem~1]{Vi} for characteristic zero and~\cite[Proposition~1]{Ri1} for the general case. 

\medskip

Recall that a normal irreducible algebraic variety is called \emph{toric} if it admits an effective action of an algebraic torus $\TT$ with an open orbit, see~\cite{CLS2011,Fu1993,Od1988}. In other words, a toric variety is a group embedding of an algebraic torus. The main result of this section is the following statement. 

\begin{proposition}
\label{toric_theor}
Any affine monoid $X$ of corank one is a toric variety. Moreover, the group of invertible elements $G(X)$ is invariant with respect to the acting torus $\TT$. 
\end{proposition}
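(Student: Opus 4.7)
The plan is to exhibit an $n$-dimensional torus acting effectively on $X$ with a dense open orbit, in which case the invariance of $G(X)$ will come for free. The torus will be built out of the two-sided multiplication by the maximal torus $T \subset G(X)$.

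First I would dispose of the case $\chi = 0$: here $G(X) = \GG_a \times T$ is abelian and dense in $X$, so the monoid $X$ itself is commutative, and both claims follow from the classification of commutative corank-one monoids in~\cite{DzZa2021}. From now on assume $\chi \ne 0$. Since $X$ is a group embedding of $G(X)$, the group $G(X)\times G(X)$ acts on $X$ by two-sided multiplication $(g_1, g_2)\cdot x = g_1 * x * g_2^{-1}$, and restricting gives a morphism-action of the torus $T\times T$ on $X$.

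Next I would identify the kernel of this action, which can be read off from the dense open subset $G_\chi\subset X$. Using~\eqref{Gchi_mult_eq} one computes
\[(s_1, s_2) \cdot (\alpha, t] = (\chi(s_1)\alpha,\, s_1 t s_2^{-1}], \qquad s_1, s_2 \in T, \ (\alpha, t]\in G_\chi,\]
so $(s_1, s_2)$ acts trivially on all of $G_\chi$ (hence on all of $X$) if and only if $\chi(s_1) = 1$ and $s_1 = s_2$, i.e.\ $(s_1, s_2)\in H := \{(s, s) : s\in \Ker\chi\}$. Set $\TT := (T\times T)/H$. Being a quotient of the connected torus $T\times T$ by a closed subgroup, $\TT$ is a torus; since $\chi$ is a nontrivial character of the connected torus $T$, it is surjective onto $\KK^\times$, giving $\dim\Ker\chi = n-2$ and $\dim\TT = 2(n-1) - (n-2) = n = \dim X$.

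To exhibit an open orbit I would look at the element $(1, 1]\in G_\chi$: its stabilizer in $T\times T$ consists of those $(s_1, s_2)$ with $\chi(s_1) = 1$ and $s_1 s_2^{-1} = 1$, which is exactly $H$. Hence the $\TT$-orbit of $(1, 1]$ has dimension $n$ and is dense in $X$, presenting $X$ as a normal irreducible variety with an effective $\TT$-action with open orbit, i.e.\ a toric variety. The invariance of $G(X)$ is then immediate: since $T\subset G(X)$ and $G(X)$ is a group under $*$, the subset $G(X)\subset X$ is closed under left-right multiplication by $T\times T$, hence $\TT$-invariant. I expect the main subtleties to be verifying that $\TT$ is genuinely a torus rather than a disconnected diagonalizable group, and confirming the dimension count for $\Ker\chi$; both are settled by the connectedness of $T$ and the nontriviality of $\chi$.
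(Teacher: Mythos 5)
Your proposal is correct and follows essentially the same route as the paper: both build the acting torus as $(T\times T)/\diag\Ker\chi$ acting by two-sided multiplication and count dimensions to get $\dim\TT=n$. The only minor differences are that you verify the open orbit by computing the stabilizer of the unity directly (where the paper invokes Demazure's general result that a full-dimensional effective torus action has an open orbit) and that you handle the case $\chi=0$ via the commutative classification of~\cite{DzZa2021} rather than~\cite{ArKo2015}; both variations are fine.
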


\begin{proof}
If $\chi = 0$, then $G_\chi$ is commutative and the result follows from~\cite[Theorem~2 and Lemma~2]{ArKo2015}. 

Let $\chi \ne 0$. First consider the actions of $G_\chi$ on itself by left and right multiplication. Let a group homomorphism $\theta\colon T \times T \to \Aut(G_\chi)$ be defined by 
\[\theta(t_1, t_2)(g) = t_1gt_2^{-1}.\]
An element $(t_1, t_2) \in T \times T$ belongs to the kernel of~$\theta$ if and only if $(t_1, t_2)$ belongs to the diagonal of the center $\diag Z(G_\chi) = \{(g,g) \mid g \in Z(G_\chi)\} \subseteq G_\chi \times G_\chi$. By Lemma~\ref{centerG_lem}, $Z(G_\chi) = \{0\} \times \Ker \chi \subseteq \GG_a \leftthreetimes T$, so $\Ker \theta = \diag \Ker \chi$. 
Since $T \times T$ is a torus, the image \[\TT = \Im \theta \subseteq \Aut(G_\chi)\] is a torus as well. It is isomorphic to $(T \times T)/\diag\Ker\chi$, so the torus $\TT$ has dimension $\dim \TT = (n-1) + (n-1) - (n-2) = n$. 

Thus we have an effective action of $\TT$ on $G_\chi$ coming from left and right multiplications of~$G_\chi$ on itself. By definition of a group embedding, this action can be extended to the action on~$X$, so there exists an effective action of $\TT$ on~$X$. It is known that any effective action of a torus of dimension coinciding with the dimension of a variety has an open orbit, see~\cite[Corollaire~1, P.~521]{De1970}. We have $\dim \TT = \dim G_\chi = \dim X = n$, so both actions of $\TT$ on $G_\chi$ and $X$ have an open orbit. 
\end{proof}

The idea of the following generalization of Proposition~\ref{toric_theor} was proposed by Sergey Gorchinskiy and Constantin Shramov. Consider a solvable monoid $X$, i.e. a monoid with a solvable group of invertible elements $G(X)$. Then $G(X) = U \leftthreetimes T$, where $T$ is a torus and $U$ is the unipotent radical of~$G(X)$. In some cases it can be proved that $X$ is toric as well. 

Namely, let the multiplication in a semidirect product $G = U \leftthreetimes T$ be given by a homomorphism $\psi\colon T \to \Aut(U)$:
\[(u,t] \cdot (u',t'] = (u \cdot \psi(t)(u'), tt'], \quad\quad u,u' \in U, \;\; t,t' \in T.\]

\begin{definition}
This semidirect product is called \emph{active} if $\dim T + \dim \Im \psi = \dim G$ or, equivalently, $\dim \Im \psi = \dim U$. 
\end{definition}

\begin{example}
Any connected affine algebraic group of corank one is solvable, and the semidirect product $G_\chi = \GG_a \leftthreetimes T$ is active if and only if $\chi \ne 0$. 
\end{example}

Since $U$ is a unipotent group, it is isomorphic to a vector space as an algebraic variety. Let us prove that we can choose coordinates in $U$ in such a way that $T$ acts on $U$ linearly. 

\begin{lemma}
There are coordinates in $U$ such that $\psi(T) \subseteq \GL(U)$. 
\end{lemma}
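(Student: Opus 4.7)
The plan is to transport a linear structure from the Lie algebra $\mathfrak{u} = \Lie(U)$ to $U$ itself by means of the exponential map. Since the ground field has characteristic zero and $U$ is unipotent, the Lie algebra $\mathfrak{u}$ is nilpotent. The Campbell--Baker--Hausdorff series therefore terminates, and $\exp\colon \mathfrak{u}\to U$ is an isomorphism of algebraic varieties with polynomial inverse. This already provides a preferred isomorphism of $U$ with an affine space; the task is then to arrange that $\psi(T)$ acts linearly in these coordinates.

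To this end, I would pass from $\psi$ to its differential at the identity. Because $\psi\colon T\to\Aut(U)$ takes values in group automorphisms of $U$, differentiation produces a morphism of algebraic groups $d\psi\colon T\to\GL(\mathfrak{u})$, equivalently a rational representation of the torus on the finite-dimensional vector space $\mathfrak{u}$. By naturality of the exponential map for morphisms of algebraic groups, one has the equivariance
\[
\psi(t)\circ\exp \;=\; \exp\circ\, d\psi(t) \qquad\text{for every } t\in T.
\]
Fixing any basis of $\mathfrak{u}$ and using $\exp^{-1}\colon U\xrightarrow{\sim}\mathfrak{u}$ to pull back the corresponding linear coordinates to $U$, the automorphism $\psi(t)$ is read off as $d\psi(t)\in\GL(\mathfrak{u})=\GL(U)$, which is precisely what the lemma demands.

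I do not expect a serious obstacle. The one ingredient whose proof is not immediate is that $\exp$ is a \emph{morphism of varieties} with polynomial inverse in a canonical way, but this is classical for unipotent groups in characteristic zero (polynomiality follows from the CBH formula combined with nilpotency of $\mathfrak{u}$, and the inverse $\log$ is similarly polynomial). The naturality identity above is then a direct consequence of functoriality of the exponential under homomorphisms of algebraic groups, applied to each $\psi(t)$.
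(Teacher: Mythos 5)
Your argument is correct and is essentially the paper's own proof: both transport linear coordinates from $\mathfrak{u}$ to $U$ via the exponential map (bijective and polynomial in both directions since $\mathfrak{u}$ is nilpotent in characteristic zero) and use the equivariance $\psi(t)\circ\exp=\exp\circ\,d\psi(t)$, the paper phrasing $d\psi(t)$ as the restriction of the adjoint representation $\Ad(t)$ to $\mathfrak{u}$. No substantive difference.
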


\begin{proof}
For $G = U \leftthreetimes T$ and its Lie algebra $\fg = \fu \oplus \ft$, consider the conjugation 
\[\psi\colon G \to \Aut(G), \quad \psi(g)(g') = gg'g^{-1},\]
and the adjoint representation
\[\Ad\colon G \to \Aut(\fg), \quad \Ad(g) = d\psi(g).\] 
Since $U$ is normal in $G$, we have restrictions
\[\psi\colon T \to \Aut(U) \quad\text{and}\quad \Ad\colon T \to \Aut(\fu) \subseteq \GL(\fu).\]
It is known that the exponential map $\exp\colon \fu \to U$ is bijective and commutes with the differential, i.e.
$\exp \circ \,\psi(t) = \Ad(t) \circ \exp$ for any $t \in T$. So $\exp$ takes coordinates on $\fu$ to~$U$ in such a way that $\psi(t) \in \GL(U)$ for any $t \in T$. 
\end{proof}

So one may assume that $\psi(t) = \diag(\chi_1(t), \ldots, \chi_k(t)) \in \GL(U)$, where $\chi_i, \, 1 \le i \le k$, are characters of the torus~$T$. In these terms, the semidirect product is active if and only if $\chi_i$ are linearly independent. 

\begin{example}
Notice that $\dim \Im \psi \le \dim T$, so for an active semidirect product there is a necessary condition $\dim T \ge \frac12\dim G$. For example, the solvable group $B$ of upper triangular $n \times n$ matrices is the semidirect product of the group of unitriangular matrices $U$ and the torus of diagonal matrices~$T$. It is not active for $n > 3$ since $\dim T = n < \frac12 \dim G = \frac{n(n+1)}{4}$. For $n=3$ we have $\dim T = 3 = \frac12\dim B$, however, the semidirect product is not active as well since $\psi(\diag(t_1, t_2, t_3))$ acts on $U$ with linear dependent characters $t_1t_2^{-1}, \, t_1t_3^{-1}, \, t_2t_3^{-1}$. 
\end{example}

\begin{example}
The group \[G = \left\{\begin{pmatrix}t_1 & 0 & \ldots & 0 & u_1\\ 0 & t_2 & \ldots & 0 & u_2\\ \ldots & \ldots & \ddots & \ldots & \ldots\\ 0 & 0 & \ldots & t_{n-1} & u_{n-1} \\ 0 & 0 & \ldots & 0 & t_n\end{pmatrix}, \, t_i \in \KK^\times, u_j \in \KK\right\}\] is an active semidirect product of the group $U$ of unitriangular matrices of~$G$ and the torus of diagonal matrices~$T$ since $\psi(T) \subseteq \GL(U)$ is generated by linearly independent characters $t_1t_n^{-1}, \, \ldots, t_{n-1}t_n^{-1}$. 
\end{example}

\begin{proposition}
Any solvable affine monoid $X$ with active group of invertible elements is a toric variety. Moreover, the group of invertible elements $G(X)$ is invariant with respect to the acting torus $\TT$. 
\end{proposition}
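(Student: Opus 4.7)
The plan is to mimic the argument of Proposition~\ref{toric_theor}, replacing the explicit description of $Z(G_\chi)$ from Lemma~\ref{centerG_lem} by a general computation of $Z(G) \cap T$ in terms of $\Ker\psi$, and then use the activeness hypothesis to verify that the torus produced by left and right multiplication has dimension exactly $\dim X$.

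First I would define the homomorphism
\[\theta\colon T\times T\to \Aut(G),\qquad \theta(t_1,t_2)(g)=t_1gt_2^{-1},\]
and determine its kernel. Evaluating $\theta(t_1,t_2)$ at $g=1$ forces $t_1=t_2=:t$, and then $\theta(t,t)=\id$ says $t\in Z(G)\cap T$. A short computation in the semidirect product
\[(0,t](u,t']=(\psi(t)(u),tt'],\qquad (u,t'](0,t]=(u,t't]\]
shows that $t\in T$ commutes with every $(u,t']\in G$ if and only if $\psi(t)=\id$. Thus $Z(G)\cap T=\Ker\psi$ and consequently $\Ker\theta=\diag\Ker\psi$.

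Next I would use the activeness of the semidirect product. Since $\Im\psi\simeq T/\Ker\psi$, the equality $\dim\Im\psi=\dim U$ gives $\dim\Ker\psi=\dim T-\dim U$. Therefore the image $\TT:=\Im\theta\subseteq\Aut(G)$ is a torus of dimension
\[\dim\TT=2\dim T-\dim\Ker\psi=\dim T+\dim U=\dim G=\dim X.\]
By construction $\TT$ acts effectively on $G$ by left and right translations, and since $X$ is a group embedding of $G$, the $G\times G$-action extends to $X$, and hence so does the $\TT$-action. Because the $\TT$-action on $X$ is effective and $\dim\TT=\dim X$, a result of Demazure (\cite[Corollaire~1, P.~521]{De1970}) produces an open $\TT$-orbit, so $X$ is a toric variety. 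Finally, $\TT$ is built from left and right translations of $G$ by itself, so $\TT\cdot G=G$; since $G=G(X)$ is open in $X$, this gives $\TT$-invariance of $G(X)$.

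The only real point where one has to be careful is the identification $Z(G)\cap T=\Ker\psi$, which replaces the explicit computation of Lemma~\ref{centerG_lem}; once this is in place, the dimension count in which activeness is used is immediate, and the rest of the argument is exactly the one already given for the corank-one case.
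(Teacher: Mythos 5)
Your proposal is correct and follows essentially the same route as the paper's proof: the same map $\theta\colon T\times T\to\Aut(G)$, the same identification $\Ker\theta=\diag\Ker\psi$ via the computation showing $t\in Z(G)$ iff $\psi(t)=\id$, the same dimension count using activeness, and the same appeal to Demazure's result on effective torus actions of full dimension. The only cosmetic difference is that you phrase the dimension count via $\dim\Ker\psi=\dim T-\dim U$ rather than directly via $\dim\Im\psi$, which is equivalent.
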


\begin{proof}
Following the proof of Proposition~\ref{toric_theor}, consider the map $\theta\colon T \times T \to \Aut(G)$ and the torus $\TT = \Im \theta$. An element $(t_1, t_2) \in T \times T$ belongs to $\Ker \theta$ if and only if $t_1 = t_2 \in Z(G)$. Notice that $t = (0, t] \in Z(G)$ if and only if $t \in \Ker \psi$. Indeed, if $(\psi(t)(u'), \, tt'] = (u', \, t't]$ for any $u' \in U$, $t \in T$, then $\psi(t) = \id$. So $\Ker\theta = \diag \Ker \psi$. 

Since $\dim T + \dim \Im \psi = \dim G$, the torus $\TT$ has dimension 
\[\dim(T \times T) - \dim \Ker\theta = \dim T + \dim T - (\dim T - \dim \Im \psi) = \dim G = \dim X.\] 
As above, it follows that the action of~$\TT$ has an open orbit on~$G(X)$ and~$X$.
\end{proof}

It would be interesting to investigate affine algebraic monoids with an arbitrary active group of invertible elements, and we plan to do it in a further publication. In this article, we concentrate on affine monoids of corank one. 

\section{Preliminaries on toric varieties}
\label{prelim_sec}

\subsection{Polyhedral cones of an affine toric variety} \label{cones_subsec}
Let us recall basic facts on affine toric varieties; see, for example,~\cite[Section~1.3]{Fu1993}. 
Let $N$ be a lattice of rank $n$, $N_\QQ = N \otimes_\ZZ \QQ$ be the rational vector space of dimension~$n$, and $\sigma \subseteq N_\QQ$ be a strongly convex polyhedral cone. Below we give a construction of an affine toric variety $X_\sigma$ of dimension $n$ corresponding to the cone~$\sigma$. It is known that all affine toric varieties arise in this way. 

Let $M = \Hom(N, \ZZ)$ be the dual lattice, $M_\QQ = M \otimes_\ZZ \QQ$ be the corresponding rational vector space, and $\langle\,\cdot\,,\,\cdot\,\rangle\colon N_\QQ \times M_\QQ \to \QQ$ be the natural pairing. We can consider $M$ as the lattice of characters of a torus $\TT = \Hom(M, \KK^\times)$ of dimension~$n$; for a lattice element $u \in M$, let $\chi^u\colon \TT \to \KK^\times$ be the corresponding character. Then the dual lattice $N$ is identified with the lattice of one-parameter subgroups of the torus~$\TT = N \otimes_\ZZ \KK^\times$. Consider the polyhedral cone $\sigma^\vee$ dual to the cone~$\sigma$:
\[
\sigma^\vee = \{u \in M_\QQ \mid \langle v, u\rangle \ge 0 \; \text{ for all } v \in \sigma\}.
\]
Then $\sigma^\vee$ is a cone of full dimension, $S_\sigma = M \cap \sigma^\vee$ is a finitely generated semigroup with $\ZZ S_\sigma = M$, and $\KK[S_\sigma] = \!\bigoplus\limits_{u \in S_\sigma} \!\KK\chi^u$ is a finitely generated $\KK$-algebra. 
We define $X_\sigma$ as the spectrum of the algebra $\KK[S_\sigma]$. 

Consider $\KK$ as a semigroup with respect to multiplication. Any semigroup homomorphism $S_\sigma \to \KK$ defines an algebra homomorphism $\KK[X_\sigma] \to \KK$, which corresponds to a point in~$X_\sigma$. Conversely, any point in~$X_\sigma$ is defined by some semigroup homomorphism ${S_\sigma \to \KK}$. The torus $\TT$ can be identified with the subset $X_0$ of $X_\sigma$ as the restrictions of group homomorphisms $M \to \KK^\times$ to semigroup homomorphisms $S_\sigma \to \KK$. 

In fact, $X_0$ is open in~$X_\sigma$, so any character $\chi^u$ of the torus $\TT$ can be identified with a rational function on~$X_\sigma$. We have the decomposition
\begin{equation}
\label{torgrad_eq}
\KK[X_\sigma] = \!\bigoplus\limits_{u \in S_\sigma} \!\KK\chi^u. 
\end{equation}
Let an element $t$ of the torus $\TT$ act on $\chi^u$ by multiplication by $\chi^u(t)$. This defines the action of~$\TT$ on $\KK[X_\sigma]$ and hence the action of~$\TT$ on~$X_\sigma$. 

\subsection{Faces and toric open subsets} \label{Urho_subsec}
Recall that a face of a cone is the intersection of the cone with a supporting hyperplane. There is a bijection between faces of the dual cones~$\sigma$ and~$\sigma^\vee$: for any $k$-dimensional face $\tau$ of the cone~$\sigma$ the set 
\[\tau^\perp \cap \sigma^\vee = \{u \in M_\QQ \mid \langle v, u\rangle = 0 \;\;\forall v \in \tau\} \cap \sigma^\vee\]
is an $(n-k)$-dimensional face of~$\sigma^\vee$, and, conversely, for any face $\gamma$ of the cone~$\sigma^\vee$ of dimension~$k$ the set $\gamma^\perp \cap \sigma$ is a face of~$\sigma$ of dimension $n-k$. Faces of dimension one are called \emph{rays}, and faces of codimension one are called \emph{facets}. 

Any homomorphism of semigroups $S_\sigma \to S_{\sigma'}$ defines a homomorphism $\KK[S_\sigma] \to \KK[S_{\sigma'}]$ of algebras and a morphism $\Spec \KK[S_{\sigma'}] \to \Spec \KK[S_\sigma]$. 
In particular, if $\tau \subseteq \sigma$, then $S_\tau \supseteq S_\sigma$, which determines a morphism $X_\tau \to X_\sigma$. It can be proved that the morphism $X_\tau \to X_\sigma$ is an open embedding if and only if $\tau$ is a face of the cone~$\sigma$. Moreover, in this case $S_\tau = S_\sigma + \Zgezero(-u')$, where $u' \in M$ and $\tau = \sigma \cap u'^\perp$, or, equivalently, $u' \in M$ is from the relative interior of the dual face $\tau^\perp \cap \sigma^\vee$ of the cone~$\sigma^\vee$. Then $\KK[S_\tau]$ is the localization of $\KK[S_\sigma]$ at $\chi^{u'}$, which determines an embedding $X_\tau \subseteq X_\sigma$ as a principal open subset. In terms of semigroup homomorphisms, $X_\tau$ consists of points $S_\sigma \to \KK$ such that the image of~$u'$ is nonzero, or, equivalently, the image of any $u' \in M$ from the relative interior of $\tau^\perp \cap \sigma^\vee$ is nonzero. In particular, for all faces $\tau$ of~$\sigma$, the actions of $\TT$ on~$X_\tau$ defined in subsection~\ref{cones_subsec} are compatible and have the open orbit is~$X_0$. 

\subsection{Torus orbits} \label{Torb_subsec}
It is known that $\TT$-orbits on $X_\sigma$ correspond to faces of the cone~$\sigma$, see~\cite[Section 3.1]{Fu1993}. More precisely, for a face $\tau \subseteq \sigma$ denote by $x_\tau$ the distinguished point in $X_\sigma$ given by the following semigroup homomorphism $S_\sigma \to \KK$:
\[u \mapsto \begin{cases}1 \;\text{ if } u \in \tau^\perp,\\
0 \;\text{ otherwise}.\end{cases}\]
By $O_\tau$ we denote the $\TT$-orbit of the point~$x_\tau$; it has dimension $n-\dim\tau$. In terms of semigroup homomorphisms it consists of homomorphisms $S_\sigma \to \KK$ such that
\[u \mapsto \begin{cases}\KK^\times \;\text{ if } u \in \tau^\perp,\\
0 \;\text{ otherwise}.\end{cases}\]
It is known that all orbits of $\TT$ on $X_\sigma$ are of the form $O_\tau$ for a face $\tau$ of $\sigma$. If $\gamma$ is a face of~$\tau$, then $O_{\tau} \subseteq \overline{O_\gamma}$. For more details, see \cite[Theorem 3.2.6]{Cox1995}. 

\subsection{Locally nilpotent derivations, $\GG_a$-actions, and Demazure roots}
\label{LND_Dem_subsec}
For an algebra~$A$, a linear operator $\delta\colon A \to A$ is a \emph{derivation} if it satisfies the Leibniz rule: \[\delta(fg) = \delta(f)g + f \delta(g)\] for any $f, g \in A$. A derivation $\delta$ is called \emph{locally nilpotent} (LND) if for any $f \in A$ there exists a number $n \in \NN$ such that $\delta^n(f) = 0$. The exponential map defines a bijection between locally nilpotent derivations on an algebra $A$ and rational $\GG_a$-actions on~$A$, see~\cite[Section~1.5]{Fr2006}. 
More precisely, an LND~$\delta$ on~$A$ defines the $\GG_a$-action $\alpha \cdot f = \exp(\alpha\delta)(f)$ for $\alpha \in \GG_a$, $f \in A$; conversely, given an action $\xi\colon \GG_a \times A \to A$, $(\alpha, f) \mapsto \alpha \cdot f$, we can recover the LND~$\delta$ as $\left(\frac{d\xi}{d\alpha}\right)|_{\alpha = 0}$. For an affine algebraic variety~$X$, $\GG_a$-actions on~$X$ are in bijection with $\GG_a$-actions on~$\KK[X]$, and so with LNDs on the algebra $\KK[X]$. 

Let $A = \bigoplus\limits_{u \in S} A_u$ be graded by a semigroup $S$. A derivation $\delta\colon A \to A$ is called \emph{homogeneous} if it maps homogeneous elements to homogeneous ones. It follows from the Leibniz rule that, for a domain $A$, a homogeneous derivation has the \emph{degree} $\deg \delta \in \ZZ S$ such that $\delta(A_u) \subseteq A_{u + \deg \delta}$ for any $u \in S$. 

Let $X_\sigma$ be an affine toric variety, and let $p_i \in N, \; 1 \le i \le m$, be primitive vectors on the rays of the cone~$\sigma$. For any $1 \le i \le m$, denote
\[
\mathfrak{R}_i = \{e \in M \mid \langle p_i, e\rangle = -1, \; \langle p_j, e\rangle \ge 0 \; \text{ for all } j \ne i, \; 1 \le j \le m\}.
\]
The elements of the set $\mathfrak{R} = \!\!\!\bigsqcup\limits_{1 \le i \le m} \!\!\!\mathfrak{R}_i$ are called the \emph{Demazure roots} of the toric variety~$X_\sigma$. It is easy to see that if $e \in \mathfrak{R}_i$, $\gamma$ is a face of~$\sigma$, and $e \in \gamma^\perp$, then the cone generated by $\gamma$ and $p_i$ is a face of~$\sigma$ as well. 

Demazure roots of $X_\sigma$ are in one-to-one correspondence with nonzero homogeneous LNDs on the algebra~$\KK[X_\sigma]$ with respect to grading~\eqref{torgrad_eq} up to proportionality. Homogeneous LNDs on~$\KK[X_\sigma]$ are, in turn, in one-to-one correspondence with $\GG_a$-actions on~$X_\sigma$ normalized by the acting torus~$\TT$, see~\cite[Theorem~2.7]{Li2010}. A Demazure root~$e$ is the degree of the corresponding LND, and the action of the torus $\TT$ on $\GG_a$ by conjugation is the multiplication by~$\chi^e$. 

\section{Classification results}

Let $X$ be an affine monoid with multiplication $X \times X \to X$. The dual homomorphism of algebras of regular functions is the \emph{comultiplication} $\KK[X] \to \KK[X] \otimes \KK[X]$. The comultiplication determines the multiplication, so affine monoids can be described in these terms. 

\smallskip

Let us formulate the main result. 

\begin{theorem}
\label{classif_theor}
Let $X$ be an affine monoid of corank one. Then $X = X_\sigma$ is toric, and the comultiplication $\KK[X_\sigma] \to \KK[X_\sigma] \otimes \KK[X_\sigma]$ has the form
\begin{equation} \label{comult_X_eq}
\chi^u \mapsto \chi^u \otimes \chi^u \, (1\otimes \chi^{e_1} + \chi^{e_2}\otimes1)^{\langle p, u\rangle},
\end{equation}
where $\KK[X_\sigma] = \!\bigoplus\limits_{u \in S_\sigma}\! \KK\chi^u$ as in equation~\eqref{torgrad_eq}, $p$ is the primitive vector on a ray of the cone~$\sigma$, and $e_1, e_2$ are Demazure roots corresponding to~$p$. Conversely, for any affine toric variety~$X_\sigma$, any primitive vector $p$ on a ray of the cone~$\sigma$, and any Demazure roots $e_1, e_2$ corresponding to the same~$p$, formula~\eqref{comult_X_eq} defines a monoid structure of corank one on~$X_\sigma$. 

The group of invertible elements is the toric open subset $X_\rho \subseteq X_\sigma$ isomorphic to the group $G_\chi$ for $\chi = \chi^{e_2 - e_1}$. 
\end{theorem}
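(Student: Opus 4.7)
My plan is to leverage Proposition~\ref{toric_theor} (which gives $X = X_\sigma$) and the explicit group law on $G_\chi = \GG_a \leftthreetimes T$ to compute $\mu^*$ on the dense open subset $G(X) \subseteq X$, then recognize the result as formula~\eqref{comult_X_eq}. The commutative case $\chi = 0$ reduces to~\cite{DzZa2021} and fits the statement by taking $e_1 = e_2$, so I focus on $\chi \ne 0$.

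First I would locate $G(X)$ inside $X_\sigma$: being open, affine, and $\TT$-invariant, it equals $X_\tau$ for some face $\tau$ of $\sigma$; a direct computation of the $\TT$-action on $G_\chi$ shows that $G_\chi$ has exactly two $\TT$-orbits (the dense torus $X_0$ and the hyperplane $\{\alpha = 0\} = T$), so $\tau = \rho$ is a ray, and I let $p$ denote its primitive vector. Next, from the group law I obtain $\mu^*(\alpha) = \alpha \otimes 1 + \chi \otimes \alpha$ and $\mu^*(\chi^m) = \chi^m \otimes \chi^m$ for any character $m$ of $T$. Each $\chi^u$ with $u \in S_\sigma$ is $\TT$-homogeneous on $G_\chi$, hence of the form $\alpha^{\langle p, u\rangle}\, g_u$ for some character $g_u$ of $T$ depending additively on $u$; the exponent $\langle p, u\rangle$ is forced because it equals the order of vanishing of $\chi^u$ along the boundary divisor $O_\rho = \{\alpha = 0\}$. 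Expanding
\[
\mu^*(\chi^u) = (\alpha \otimes 1 + \chi \otimes \alpha)^{\langle p, u\rangle}(g_u \otimes g_u)
\]
by the binomial theorem and regrouping the sum as $(\chi^u \otimes \chi^u)(1 \otimes \chi^{e_1} + \chi^{e_2} \otimes 1)^{\langle p, u\rangle}$ identifies unique elements $e_1, e_2 \in M$ characterized by $\langle p, e_i\rangle = -1$, $g_{e_1} = 1$, and $g_{e_2} = \chi$.

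The main obstacle is to verify that $e_1$ and $e_2$ are Demazure roots, i.e. $\langle p_j, e_i\rangle \ge 0$ for every other ray $p_j$ of $\sigma$. The tensor monomials $\chi^{u + k e_2} \otimes \chi^{u + (\langle p, u\rangle - k)e_1}$ appearing in the binomial expansion carry pairwise distinct $\TT \times \TT$-weights, so since $\mu^*$ has image in $\KK[X_\sigma] \otimes \KK[X_\sigma]$, each such tensor must be globally regular, forcing $u + k e_2,\, u + (\langle p, u\rangle - k) e_1 \in S_\sigma$ for all $u \in S_\sigma$ and $0 \le k \le \langle p, u\rangle$. Choosing $u \in S_\sigma$ with $\langle p_j, u\rangle = 0$ and $\langle p, u\rangle$ arbitrarily large (possible since the facets of $\sigma^\vee$ dual to distinct rays of $\sigma$ are distinct $(n-1)$-dimensional cones) then forces $\langle p_j, e_i\rangle \ge 0$.

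For the converse, given an affine toric variety $X_\sigma$ and Demazure roots $e_1, e_2$ on a common ray with primitive vector $p$, I would check that formula~\eqref{comult_X_eq} takes values in $\KK[X_\sigma] \otimes \KK[X_\sigma]$ (each binomial term lies in $S_\sigma \otimes S_\sigma$ by the Demazure inequalities), defines a coassociative comultiplication via Vandermonde's convolution $\binom{a+b}{k} = \sum_{i+j=k} \binom{a}{i}\binom{b}{j}$, and admits as counit the evaluation at the distinguished point $x_\rho$. Restricting the resulting multiplication to the open subvariety $X_\rho \subseteq X_\sigma$ recovers the group law of $G_\chi$ with $\chi = \chi^{e_2 - e_1}$, thereby identifying $G(X) = X_\rho$ as claimed.
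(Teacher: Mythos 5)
Your proposal is correct in substance and follows the same overall architecture as the paper (reduce to Proposition~\ref{toric_theor}, identify $G(X)=X_\rho$ with $\rho$ a ray via the $\TT$-orbit structure of $\GG_a\leftthreetimes T$, compute $\mu^*$ on $\KK[G_\chi]$ from the group law, and read off $e_1,e_2$ from $\falpha^{-1}$ and $\fbeta^{-1}$). Where you genuinely diverge is the key step that $e_1,e_2$ are Demazure roots. The paper extends the left and right $\GG_a$-translations to $X_\sigma$, observes that the resulting homogeneous locally nilpotent derivations $\delta_l=d/d\falpha$ and $\delta_r=d/d\fbeta$ preserve $\KK[X_\sigma]$ with degrees $e_1,e_2$, and invokes the correspondence between homogeneous LNDs and Demazure roots (\cite[Theorem~2.7]{Li2010}). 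You instead extract the inequalities $\langle p_j,e_i\rangle\ge 0$ directly from the inclusion $\mu^*(\KK[X_\sigma])\subseteq\KK[X_\sigma]\otimes\KK[X_\sigma]$: the binomial terms $\chi^{u+ie_2}\otimes\chi^{u+je_1}$ have pairwise distinct $\TT\times\TT$-weights, hence each lies in $S_\sigma\times S_\sigma$, and testing against a lattice point $u$ in the relative interior of the facet $p_j^\perp\cap\sigma^\vee$ (which has $\langle p_j,u\rangle=0$ and $\langle p,u\rangle\ge 1$) yields the claim. This is a perfectly valid and arguably more elementary argument, as it avoids Liendo's theorem; the paper's route is more conceptual and sets up the LND picture reused later for idempotents. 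In the converse direction, the paper gets associativity and the unit for free from density via Lemma~\ref{comult_restr_lem}, whereas you propose a direct Vandermonde/trinomial verification of coassociativity; both work, the density argument being shorter. One small point you should not skip: the theorem asserts that $X_\rho$ is exactly the group of invertible elements, so besides checking that every point of $X_\rho$ is invertible you must also show no point outside $X_\rho$ is; this follows quickly because for $u$ in the relative interior of $\rho^\perp\cap\sigma^\vee$ one has $\chi^u(x*y)=\chi^u(x)\chi^u(y)$, so $x*y=x_\rho$ forces $\chi^u(x)\ne 0$, i.e. $x\in X_\rho$.
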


\begin{remark}
Consider an automorphism of the lattice~$N$ such that the corresponding automorphism of the vector space~$N_\QQ$ preserves the cone~$\sigma$. Let $e_k$ map to~$e_k'$ under the dual automorphism of the lattice~$M$, where $e_k$, $k=1,2$, are two Demazure roots corresponding to the same ray of~$\sigma$. Then two monoid structures on $X_\sigma$ given by $e_1, e_2$ and by $e_1', e_2'$ are isomorphic. 
\end{remark}

\begin{example}
\label{mult_dim3_example}
Let us find all monoid structures of rank 2 on the quadratic cone \[X = \{vw=zt\} \subseteq \AA^4;\] see~\cite[Example~5]{DzZa2021} for the commutative case. It is an affine toric variety given by the cone~$\sigma$ with $p_1 = (1, 0, 0)$, $p_2 = (0, 1, 0)$, $p_3 = (1, 0, 1)$, and $p_4 = (0, 1, 1)$, see Figure~\ref{dim3pict}. Indeed, the semigroup $S_\sigma = M \cap \,\sigma^\vee$ is generated by vectors $(1,0,0)$, $(0,1,0)$, $(0,0,1)$, and $(1,1,-1)$; for the functions $v=\chi^{(1,0,0)}$, $w=\chi^{(0,1,0)}$, $z=\chi^{(0,0,1)}$, and $t=\chi^{(1,1,-1)}$, we obtain $\KK[X] = \KK[v, w, z, t]$ with relation $vw = zt$. 
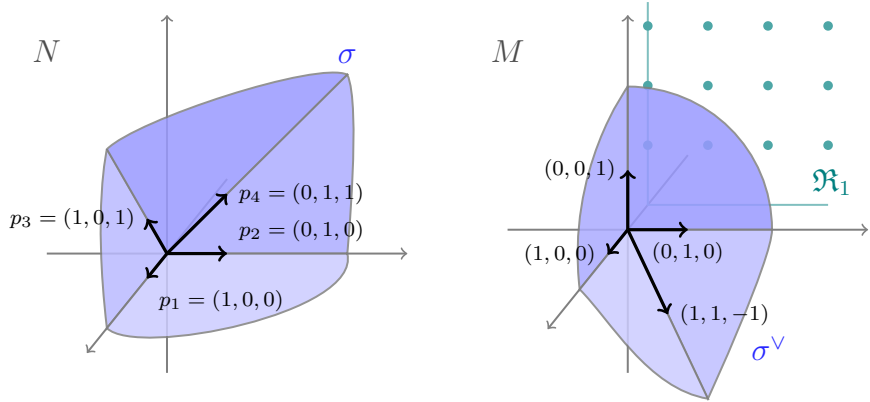
\begin{figure}[h]
\begin{center}

\tikzset{every picture/.style={line width=0.75pt}} %set default line width to 0.75pt        

\begin{tikzpicture}[x=0.75pt,y=0.75pt,yscale=-1,xscale=1]
%uncomment if require: \path (0,300); %set diagram left start at 0, and has height of 300
    \tikzstyle{bluefill} = [fill=blue!20, draw = blue!80,opacity=0.85]
    \tikzstyle{violetfill} = [fill=violet!40, draw = violet!80,opacity=0.85]
    \tikzstyle{tealfill} = [fill=teal!15, draw = teal!80,opacity=0.85]
%    \tikzstyle{conefill_gamma} = [fill=red!20, draw = black!70]

\coordinate (e1) at (-20,25);
\coordinate (e2) at (60,0);
\coordinate (e3) at (0,-60);

\coordinate (O) at (20,230);
\coordinate (Oxmax) at ($(O)+2*(e1)$);
\coordinate (Oymax) at ($(O)+2*(e2)$);
\coordinate (Ozmax) at ($(O)+2*(e3)$);
\coordinate (Oxmin) at ($(O)-1.5*(e1)$);
\coordinate (Oymin) at ($(O)-1*(e2)$);
\coordinate (Ozmin) at ($(O)-1*(e3)$);
\coordinate (A) at ($(O)+1.5*(e1)$);
\coordinate (B) at ($(O)+1.5*(e1)+1.5*(e3)$);
\coordinate (C) at ($(O)+1.5*(e2)+1.5*(e3)$);
\coordinate (D) at ($(O)+1.5*(e2)$);
\node[color=black!70] at ($(O)-1*(e2)+1.7*(e3)$) {$N$};

\draw [->,color=black!50] (Oxmin) -- (Oxmax); %node[below] {\scriptsize $x_1$};
\draw [->,color=black!50] (Oymin) -- (Oymax); %node[below] {\scriptsize $x_2$};
\draw [->,color=black!50] (Ozmin) -- (Ozmax); %node[below] {\scriptsize $x_3$};

\filldraw [fill=blue!27,draw=black!50,opacity=0.85] (O) -- (A)  .. controls +(-7,-60) and +(0,0) .. (B) -- cycle;
\filldraw [fill=blue!40,draw=black!50,opacity=0.85] (O) -- (B)  .. controls +(15,-15) and +(-15,-7) .. (C) -- cycle;
\filldraw [fill=blue!32,draw=black!50,opacity=0.85] (O) -- (C) node[above,color=blue] {$\sigma$} .. controls +(7,15) and +(0,0) .. (D) -- cycle;
\filldraw [fill=blue!20,draw=black!50,opacity=0.85] (O) -- (D)  .. controls +(7,30) and +(15,15) .. (A) -- cycle;

\draw [->,color=black,very thick] (O) -- ($(O)+0.5*(e1)$) node[below right] {\tiny $p_1=(1,0,0)$};
\draw [->,color=black,very thick] (O) -- ($(O)+0.5*(e2)$) node[above right] {\tiny $p_2=(0,1,0)$};
\draw [->,color=black,very thick] (O) -- ($(O)+0.5*(e1)+0.5*(e3)$) node[left] {\tiny $p_3=(1,0,1)$};
\draw [->,color=black,very thick] (O) -- ($(O)+0.5*(e2)+0.5*(e3)$) node[right] {\tiny $p_4=(0,1,1)$};

%%%%%%%%%%%%%%%%%%%%%%%%%%% 

\coordinate (O) at (250,230);
\coordinate (O) at ($(O)+0.2*(e3)$);
\coordinate (Oxmax) at ($(O)+2*(e1)$);
\coordinate (Oymax) at ($(O)+2*(e2)$);
\coordinate (Ozmax) at ($(O)+1.8*(e3)$);
\coordinate (Oxmin) at ($(O)-1.5*(e1)$);
\coordinate (Oymin) at ($(O)-1*(e2)$);
\coordinate (Ozmin) at ($(O)-1.2*(e3)$);
\coordinate (A) at ($(O)+1.2*(e1)$);
\coordinate (B) at ($(O)+1.2*(e3)$);
\coordinate (C) at ($(O)+1.2*(e2)$);
\coordinate (D) at ($(O)+1*(e1)+1*(e2)-1*(e3)$);
\node[color=black!70] at ($(O)-1*(e2)+1.5*(e3)$) {$M$};

\draw [->,color=black!50] (Oxmin) -- (Oxmax); %node[below] {\scriptsize $x_1$};
\draw [->,color=black!50] (Oymin) -- (Oymax); %node[below] {\scriptsize $x_2$};
\draw [->,color=black!50] (Ozmin) -- (Ozmax); %node[below] {\scriptsize $x_3$};

%\draw [color=teal!50] ($(O)-0.5*(e1)$) -- ($(O)-0.5*(e1)+1.5*(e2)$) -- ($(O)-0.5*(e1)+1.5*(e2)+1.5*(e3)$) -- ($(O)-0.5*(e1)+1.5*(e3)$) -- cycle;
\draw [color=teal!50] ($(O)-0.5*(e1)+1.7*(e3)$) -- ($(O)-0.5*(e1)$) -- ($(O)-0.5*(e1)+1.5*(e2)$);

\foreach \x in {0,...,3}
  \foreach \y in {1,2,3}
  {
  \node[draw=teal!70,circle,inner sep=1pt,fill=teal!70] at ($(O)+\x*0.5*(e2)+\y*0.5*(e3)-0.5*(e1)$) {};
  %\draw circle at ($\x*(e2)+\y*(e3)-(e1)$);
  %\draw (\x,\y) node{\x,\y};
  }
\node[color=teal] at ($(O)+1.7*(e2)+0.4*(e3)$) {$\mathfrak{R}_1$};

\filldraw [fill=blue!33,draw=black!50,opacity=0.85] (O) -- (A)  .. controls +(-7,-60) and +(0,0) .. (B) -- cycle;
\filldraw [fill=blue!40,draw=black!50,opacity=0.85] (O) -- (B)  .. controls +(40,0) and +(0,-40) .. (C) -- cycle;
\filldraw [fill=blue!25,draw=black!50,opacity=0.85] (O) -- (C) .. controls +(0,15) and +(0,0) .. node[midway,color=blue,below right] {$\sigma^\vee$} (D) -- cycle;
\filldraw [fill=blue!20,draw=black!50,opacity=0.85] (O) -- (D)  .. controls +(-30,-5) and +(15,15) .. (A) -- cycle;

\draw [->,color=black,very thick] (O) -- ($(O)+0.5*(e1)$) node[left] {\tiny $(1,0,0)$};
\draw [->,color=black,very thick] (O) -- ($(O)+0.5*(e2)$) node[below] {\tiny $(0,1,0)$};
\draw [->,color=black,very thick] (O) -- ($(O)+0.5*(e3)$) node[left] {\tiny $(0,0,1)$};
\draw [->,color=black,very thick] (O) -- ($(O)+0.5*(e1)+0.5*(e2)-0.5*(e3)$) node[right] {\tiny $(1,1,-1)$};

\end{tikzpicture}
\end{center}
\caption{The cones of $X$ in Example~\ref{mult_dim3_example}.}
\label{dim3pict}
\end{figure}
All vectors $p_i$ are equivalent up to an automorphism of the lattice $N$, let $p = p_1$. The set of corresponding Demazure roots equals \[\mathfrak{R}_1 = \{(-1, k, l) \mid k \in \Zgezero, l \in \ZZ_{>0}\},\] take $e_1 = (-1, k_1, l_1)$ and $e_2 = (-1, k_2, l_2)$. According to Theorem~\ref{classif_theor}, the product of $x, y \in X$ is defined by the formula 
\[\chi^u(x*y) = \chi^u(x) \chi^u(y) \, (\chi^{e_1}(y) + \chi^{e_2}(x))^{\langle p, u\rangle},\]
i.e.
\begin{multline*}
(v_x, w_x, z_x, t_x)*(v_y, w_y, z_y, t_y) = \\ = (v_xw_y^{k_1}z_y^{l_1} + v_yw_x^{k_2}z_x^{l_2}, \, w_xw_y, \, z_xz_y, \, t_xw_y^{k_1+1}z_y^{l_1-1} + t_yw_x^{k_2+1}z_x^{l_2-1}).
\end{multline*}
\end{example}

\medskip

The proof of Theorem~\ref{classif_theor} is given in the next section. Let us specialize the result to the case of affine space. 

\begin{theorem}
\label{affspace_theor}
Any monoid structure of corank one on $\AA^n$ up to a polynomial change of variables is given by
\[(x_1, \ldots, x_n) * (y_1, \ldots, y_n) = (x_1y_1, \, \ldots, \, x_{n-1}y_{n-1}, \, x_1^{a_1}\ldots x_{n-1}^{a_{n-1}}y_n + y_1^{b_1}\ldots y_{n-1}^{b_{n-1}}x_n),\]
where $a_1, \ldots, a_{n-1}, b_1, \ldots, b_{n-1} \in \ZZ_{\ge0}$.
\end{theorem}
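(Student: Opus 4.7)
The plan is to derive Theorem~\ref{affspace_theor} directly from Theorem~\ref{classif_theor} by specializing all the toric data to the affine space. The first step is to recognize $\AA^n$ as a toric variety: in coordinates $x_1,\ldots,x_n$, it corresponds to the lattice $N = \ZZ^n$ with the cone $\sigma = \cone(e_1,\ldots,e_n)$ and dual cone $\sigma^\vee = \cone(f_1,\ldots,f_n)$, where $f_i$ is the dual basis; the coordinate function $x_i$ coincides with the character $\chi^{f_i}$. This is the only cone, up to lattice automorphism, whose associated affine toric variety is smooth of dimension~$n$, so Theorem~\ref{classif_theor} forces the monoid structure to arise from this cone.

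Next I would reduce to a standard choice of ray. The rays of $\sigma$ are generated by $p_i=e_i$, and the symmetric group $S_n$ acts on $N$ by lattice automorphisms preserving~$\sigma$; on $\AA^n$ this action is just the permutation of coordinates, which is a polynomial change of variables. Hence I may assume that the ray picked out in Theorem~\ref{classif_theor} is $p = p_n$. The Demazure roots attached to~$p_n$ are then easy to enumerate:
\[
\mathfrak{R}_n = \{(c_1,\ldots,c_{n-1},-1)\in\ZZ^n : c_i\ge 0\},
\]
so the two Demazure roots can be written as $e_1=(a_1,\ldots,a_{n-1},-1)$ and $e_2=(b_1,\ldots,b_{n-1},-1)$ with nonnegative integer entries.

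The final step is to read off the multiplication from the comultiplication formula~\eqref{comult_X_eq}. For $u = f_i$ with $i<n$ one has $\langle p,f_i\rangle = 0$, so $x_i(x*y) = x_iy_i$. For $u = f_n$ one has $\langle p,f_n\rangle = 1$, and evaluating
\[
\chi^{e_1}(y) + \chi^{e_2}(x) = y_1^{a_1}\cdots y_{n-1}^{a_{n-1}}y_n^{-1} + x_1^{b_1}\cdots x_{n-1}^{b_{n-1}}x_n^{-1}
\]
and multiplying by $x_ny_n$ yields
\[
x_n(x*y) = y_1^{a_1}\cdots y_{n-1}^{a_{n-1}}x_n + x_1^{b_1}\cdots x_{n-1}^{b_{n-1}}y_n,
\]
which is the formula in the statement after relabeling $a_i\leftrightarrow b_i$ (legitimate because $e_1$ and $e_2$ are interchangeable in the theorem).

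The only mild subtlety, and the thing most likely to require a sentence of justification, is the reduction to $p=p_n$ via a \emph{polynomial} change of variables: I need to point out that a lattice automorphism preserving $\cone(e_1,\ldots,e_n)$ must permute the rays, hence the coordinates, and therefore lifts to a true polynomial automorphism of $\AA^n$ rather than merely a birational or toric one. Once this is said, the remainder is a direct computation from~\eqref{comult_X_eq}.
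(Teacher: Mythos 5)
Your proof is correct and follows essentially the same route as the paper: specialize Theorem~\ref{classif_theor} to the standard toric structure on $\AA^n$, reduce to the ray $p=(0,\ldots,0,1)$ by permuting coordinates, enumerate the Demazure roots attached to it, and evaluate formula~\eqref{comult_X_eq} on the coordinate characters. The only difference is in the first reduction: the paper cites Bia{\l}ynicki-Birula's linearization theorem to put the torus action in diagonal form, whereas you argue that a cone presenting $\AA^n$ must be the positive orthant up to a lattice automorphism (strictly, the relevant condition is $X_\sigma\cong\AA^n$ rather than mere smoothness, since $\AA^k\times(\KK^\times)^{n-k}$ is also smooth of dimension~$n$); both justifications work.
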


\begin{proof}
According to~\cite{BB-1} one can assume that the toric structure on $\AA^n$ from Theorem~\ref{classif_theor} is given by diagonal matrices. The cones $\sigma \subseteq N_\QQ$ and $\sigma^\vee \subseteq M_\QQ$ of the affine space~$\AA^n$ are the positive octants in the corresponding rational vector spaces. Since all rays of $\sigma$ are equivalent up to automorphism, we can consider only one ray~$\rho$ with the primitive vector $p = (0, \ldots, 0, 1) \in N$. It remains to choose two Demazure roots $e_1 = (b_1, \ldots, b_{n-1}, -1)$, $e_2 = (a_1, \ldots, a_{n-1}, -1)$, $a_1, \ldots, a_{n-1}, b_1, \ldots, b_{n-1} \in \ZZ_{\ge0}$, and use formula~\eqref{comult_X_eq} for all basis vectors $u=u_i$, corresponding to coordinates $x_i$ on~$\AA^n$, $1 \le i \le n$. For $1 \le i \le n-1$, we have $\chi^{u_i}(x*y) = \chi^{u_i}(x)\chi^{u_i}(y) = x_iy_i$, and for $i = n$ we obtain \[\chi^{u_n}(x*y) = \chi^{u_n}(x)\chi^{u_n}(y)(\chi^{e_1}(y)+\chi^{e_2}(x)) = x_ny_n(y_1^{b_1}\ldots y_{n-1}^{b_{n-1}}y_n^{-1} + x_1^{a_1}\ldots x_{n-1}^{a_{n-1}}x_n^{-1}).\]
\end{proof}

\section{Proof of Theorem~\ref{classif_theor}}
\label{proofclass_sec}

The proof is divided into four subsections below. The first three subsections contain the proof of the direct implication of the theorem, and the last subsection provides the converse implication. 

\subsection{Preliminary results}

Let $X$ be an affine monoid of corank one. 
By Proposition~\ref{toric_theor}, the variety $X$ is toric with respect to the torus~$\TT$, and the open subset $G_\chi \subseteq X$ is invariant with respect to~$\TT$, i.e. is a toric variety as well. Let $X = X_\sigma$ for a cone $\sigma \subseteq N_\QQ$ and $G_\chi = X_\rho$ for a cone $\rho \subseteq N_\QQ$. 
As an algebraic variety, the group $G_\chi = \GG_a \leftthreetimes T$ is isomorphic to the direct product of the line $\GG_a$ and the torus $T$. 
Then the cone $\rho \subseteq N_\QQ$ is a ray and the dual cone $\rho^\vee \subseteq M_\QQ$ is a halfspace. 

Notice that the inclusion $G_\chi = X_\rho \hookrightarrow X = X_\sigma$ restricts to the identity map on the open orbit of the torus action. So it comes from the inclusion of semigroups $S_\sigma \subseteq S_\rho$ and of cones $\rho \hookrightarrow \sigma$ as in subsection~\ref{Urho_subsec}. Since $X_\rho \subseteq X_\sigma$ is an open embedding, the cone $\rho$ is a face of the cone $\sigma$, see subsection~\ref{Urho_subsec}. 
So $\rho$ is a ray of the cone~$\sigma$. 

The action of $\TT = \Im\theta$ on~$G_\chi$ comes from left and right multiplications of $T$ on~$G_\chi$, see Proposition~\ref{toric_theor}. If we act on a point $(\alpha, t] \in G_\chi$ by an element $\theta(t_1, t_2) \in \TT$, $t_1, t_2 \in T$, we obtain a point
\begin{equation} \label{TactonGchi_eq}
\theta(t_1, t_2)\bigl((\alpha, t]\bigr) = (0, t_1](\alpha, t](0, t_2^{-1}] = (\chi(t_1)\alpha, \, t_1tt_2^{-1}].
\end{equation}
Notice that $\chi(t_1) = \chi(t_1')$ does not depend on a choice of a representative $\theta(t_1, t_2) = \theta(t_1', t_2') \in \TT$ since $\Ker \theta = \diag \Ker \chi$. So $\theta(t_1, t_2) \mapsto \chi(t_1)$ is a character of~$\TT$. It follows that the coordinate function $\falpha \in \KK[G_\chi]$ is homogeneous with respect to grading~\eqref{torgrad_eq} corresponding to the action of~$\TT$. The function $\fbeta = \chi^{-1}(t)\falpha \in \KK[G_\chi]$ is homogeneous as well. 
Multiplying $\falpha$ and $\fbeta$ by an appropriate scalar we may assume that $\falpha = \chi^a$ and $\fbeta = \chi^b$ for some $a, b \in S_\rho$. Notice that $\falpha \in \KK[G_\chi]$ and $1/\falpha \notin \KK[G_\chi]$, whence $a \in S_\rho$ does not belong to the face $\rho^\perp$, i.e. $\langle p, a\rangle > 0$. 

%%%%%%%%%%%
\subsection{Comultiplication on $\KK[G_\chi]$} Let $G$ be the group of invertible elements in $X$. Then $G$ is open in $X$, the algebra $\KK[X]$ is a subalgebra of $\KK[G]$, and the comultiplication on $\KK[X]$ is the restriction of the comultiplication~$\KK[G] \to \KK[G]\otimes\KK[G]$ corresponding to the multiplication in the group $G$. 

\medskip

Therefore, we are interested in the comultiplication on $\KK[G_\chi]$. 

\begin{lemma}
\label{comult_Gchi_lem}
The comultiplication $\KK[G_\chi] \to \KK[G_\chi] \otimes \KK[G_\chi]$ is given by 
\begin{equation} \label{comult_Gchi_eq2}
\chi^u \mapsto \chi^u \otimes \chi^u \, (1 \otimes \falpha^{-1} + \fbeta^{-1} \otimes 1)^{\langle p, u\rangle},
\end{equation}
where $u \in S_\rho$ and $\KK[G_\chi] = \!\bigoplus\limits_{u \in S_\rho} \!\KK\chi^u$. 
\end{lemma}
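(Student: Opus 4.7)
The plan is to reduce the computation to two elementary cases using the multiplicativity of $\mu^*$, and then extend by the factorization $\chi^u = \chi^{u-\langle p,u\rangle a}\cdot(\chi^a)^{\langle p,u\rangle}$. Since $\KK[G_\chi]=\bigoplus_{u\in S_\rho}\KK\chi^u$ and $\mu^*$ is an algebra homomorphism, it suffices to verify the formula on a convenient set of generators.

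\emph{Step 1 (torus part).} For $\nu\in\rho^\perp\cap M$, the character $\chi^\nu$ depends only on the $T$-component of a point $(\alpha,t]\in G_\chi$, since it is the pull-back of a character of $T$ along the projection $G_\chi\to T$. As the $T$-components of a product in $G_\chi$ multiply, one has $\mu^*(\chi^\nu)=\chi^\nu\otimes\chi^\nu$.

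\emph{Step 2 ($\falpha$-part).} The semidirect-product formula~\eqref{Gchi_mult_eq} gives directly
\[\mu^*(\falpha)=\falpha\otimes 1+\widetilde\chi\otimes\falpha,\]
where $\widetilde\chi\in\KK[G_\chi]$ is the pull-back of $\chi\colon T\to\KK^\times$. Using the relation $\fbeta=\chi^{-1}(t)\falpha$ from the preliminaries, we identify $\widetilde\chi=\falpha\fbeta^{-1}=\chi^{a-b}$, so $\mu^*(\chi^a)=\chi^a\otimes 1+\chi^{a-b}\otimes\chi^a$. Rewriting, this is the same as $(\chi^a\otimes\chi^a)(1\otimes\falpha^{-1}+\fbeta^{-1}\otimes 1)$, which matches the desired formula for $u=a$, provided $\langle p,a\rangle=1$.

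\emph{Step 3 (the value $\langle p,a\rangle$).} This is where the main effort lies. The ray $\rho$ of $G_\chi=X_\rho$ corresponds to the unique codimension-one $\TT$-orbit $O_\rho$, which under the identification $G_\chi\cong \GG_a\times T$ is the divisor $\{0\}\times T$. Since $\falpha$ is the standard coordinate on the factor $\GG_a$, it vanishes to first order on $O_\rho$; as the order of vanishing of $\chi^u$ on $O_\rho$ is $\langle p,u\rangle$, we conclude $\langle p,a\rangle=1$, and symmetrically $\langle p,b\rangle=1$. I expect verifying this divisor/cone correspondence carefully to be the main conceptual obstacle, though it follows from the general $\TT$-orbit description recalled in Section~\ref{Torb_subsec}.

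\emph{Step 4 (general $u$).} For $u\in S_\rho$, set $k=\langle p,u\rangle\ge 0$ and $\nu=u-ka$. Then $\langle p,\nu\rangle=0$, so $\nu\in\rho^\perp\cap M$. Combining Steps 1 and 2 via multiplicativity,
\[\mu^*(\chi^u)=\mu^*(\chi^\nu)\,\mu^*(\chi^a)^k=(\chi^\nu\otimes\chi^\nu)\bigl(\chi^a\otimes\chi^a\bigr)^k\bigl(1\otimes\falpha^{-1}+\fbeta^{-1}\otimes 1\bigr)^k,\]
and the prefactor collapses to $\chi^u\otimes\chi^u$, yielding~\eqref{comult_Gchi_eq2}. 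The remainder is formal bookkeeping with the grading and the semidirect-product law.
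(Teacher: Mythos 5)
Your proof is correct and follows essentially the same route as the paper: reduce to the generators $\rho^\perp\cap M$ and $a$ of the semigroup $S_\rho$, read off the two elementary comultiplications (torus characters and $\falpha$) from the semidirect-product law, and recombine by multiplicativity of $\mu^*$. The only (harmless) deviation is Step~3, where you obtain $\langle p,a\rangle=1$ from the order of vanishing of $\falpha$ along the divisor $O_\rho$, whereas the paper deduces it from the fact that $M(\rho)$ together with $a$ generates a semigroup whose group hull is all of $M$; both are standard toric facts and either suffices.
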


\begin{proof}
Consider the orbits of the action of the torus $\TT$ on $X_\rho = G_\chi = \GG_a \leftthreetimes T$. The cone~$\rho$ has two faces $0$ and $\rho$, so according to subsection~\ref{Torb_subsec} there are two orbits $X_\rho = O_0 \cup O_\rho$. By formula~\eqref{TactonGchi_eq}, we see that $O_0 = \{\falpha \ne 0\}$ is the open orbit and $O_\rho = \{\falpha = 0\} = T$ is its complement. 
The torus $\Hom(M(\rho), \KK^\times)$ for the sublattice $M(\rho) = \rho^\perp \cap M$ in~$M$ is identified with $O_\rho$, so $\KK[T] = \KK[M(\rho)]$. 

Since $\KK[G_\chi] = 
\KK[T][\falpha]$, the semigroup~$S_\rho$ is generated by the lattice~$M(\rho)$ and $a\in S_\rho$, where $\falpha = \chi^a$. 
Notice that $\langle p, u_1+u_2\rangle = \langle p, u_1\rangle+\langle p, u_2\rangle$ and $\chi^{u_1+u_2} = \chi^{u_1}\chi^{u_2}$. Then it is sufficient to check formula~\eqref{comult_Gchi_eq2} for $u \in M(\rho)$ and for $u = a$. 

For $u \in M(\rho)$ we obtain $\chi^u \mapsto \chi^u \otimes \chi^u$ since $\chi^u \in \KK[T]$ are multiplied as characters in $T \subseteq G_\chi$. 
Notice that for $u \in M(\rho)$ formula~\eqref{comult_Gchi_eq2} also turns into $\chi^u \mapsto \chi^u \otimes \chi^u$ since $\langle p, u\rangle = 0$. 

By equation~\eqref{Gchi_mult_eq} we also have
$\falpha \mapsto \falpha\otimes1 + \chi(t)\otimes\falpha$.
Let us check that it coincides with formula~\eqref{comult_Gchi_eq2} for $\chi^u = \falpha$, i.e. for $u=a$. Notice that \[\falpha\otimes1 + \chi(t)\otimes\falpha = \falpha\otimes\falpha \, (1\otimes\falpha^{-1} + \chi(t)\falpha^{-1} \otimes 1),\] 
where $\chi(t)\falpha^{-1} = \fbeta^{-1}$. The exponent $\langle p,a\rangle$ is equal to~$1$ since the sublattice $M(\rho) = \{u \in M \mid \langle p, u\rangle = 0\}$ and the element $a \in \rho^\vee$ generate a semigroup $S_\rho$ in $M$ such that $\ZZ S_\rho = M$. 
\end{proof}

\subsection{Derivations and Demazure roots}
By Lemma~\ref{comult_Gchi_lem}, the comultiplication on $\KK[X_\sigma]$ is the restriction of the comultiplication on $\KK[G_\chi]$, see equation~\eqref{comult_Gchi_eq2}:
\[\chi^u \mapsto \chi^u \otimes \chi^u \, (1 \otimes \falpha^{-1} + \fbeta^{-1} \otimes 1)^{\langle p, u\rangle}.\]
Denote $e_1 = -a$ and $e_2 = -b$, where $\falpha = \chi^a$ and $\fbeta = \chi^b$, $a, b \in S_\rho$. We have seen in the proof of Lemma~\ref{comult_Gchi_lem} that $\langle p, a\rangle = 1$, so $\langle p, e_1\rangle = -1$. In the same way $\langle p, e_2\rangle = -1$. 

Consider two $\GG_a$-actions on~$G_\chi$ coming from left and right multiplication by $\GG_a \subseteq G_\chi$: 
$$\begin{aligned}
\GG_a \times G_\chi \to G_\chi, &\quad \alpha' \cdot (\alpha, \, t] = (\alpha' + \alpha, \, t];\\
G_\chi \times \GG_a \to G_\chi, &\quad [t, \beta) \cdot \beta' = [t, \beta + \beta').
\end{aligned}$$
To obtain the corresponding LNDs on $\KK[G_\chi] = \KK[T][\falpha] = \KK[T][\fbeta]$, we differentiate dual actions of~$\GG_a$ on~$\KK[G_\chi]$ at $\alpha'=0$ and $\beta'=0$: 
\[\delta_l = \frac{d}{d\falpha}; \quad \delta_r = \frac{d}{d\fbeta}.\] 

Left and right multiplications on $G_\chi$ can be extended to $X_\sigma$. Since the algebra $\KK[X_\sigma]$ is invariant with respect to the action by left and right multiplication, the algebra $\KK[X_\sigma]$ is also invariant with respect to derivations~$\delta_l$ and~$\delta_r$. Notice that $\delta_l$ and~$\delta_r$ are homogeneous LNDs on $\KK[X]$ with respect to the grading $\KK[X_\sigma] = \!\bigoplus\limits_{u \in S_\sigma} \!\KK\chi^u$ with degrees $e_1$ and $e_2$, respectively, since $\falpha^{-1} = \chi^{e_1}$ and $\fbeta^{-1} = \chi^{e_2}$. It follows that $e_1$ and $e_2$ are Demazure roots of~$X_\sigma$. Since $\langle p, e_1\rangle = \langle p, e_2\rangle = -1$, they correspond to the ray~$\rho$. Thus we proved that the comultiplication on $\KK[X_\sigma]$ is given by formula~\eqref{comult_X_eq}. 

\subsection{The inverse implication}
\label{invproof_subsect}
First let us provide the proof of the following lemma. 
\begin{lemma} \label{comult_restr_lem}
Let $G$ be an affine algebraic group and $G \hookrightarrow X$ be an open embedding into an affine algebraic variety $X$. Let $\mu\colon \KK[G] \to \KK[G]\otimes\KK[G]$ be the comultiplication on $\KK[G]$ and $\mu(\KK[X]) \subseteq \KK[X] \otimes \KK[X]$. Then the restriction of~$\mu$ to~$\KK[X]$ defines a comultiplication on $\KK[X]$ that corresponds to some monoid structure on~$X$. 
\end{lemma}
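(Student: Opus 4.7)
The plan is to transfer the group axioms on $G$ to $X$ via a standard density argument: since $G$ is open and dense in the irreducible variety $X$, any equation between morphisms $X^k \to X$ that holds on $G^k$ automatically extends to all of $X^k$.

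First I would observe that, since $\mu \colon \KK[G] \to \KK[G] \otimes \KK[G]$ is a $\KK$-algebra homomorphism and $\mu(\KK[X]) \subseteq \KK[X] \otimes \KK[X]$ by hypothesis, the restriction $\mu|_{\KK[X]} \colon \KK[X] \to \KK[X] \otimes \KK[X]$ is again a $\KK$-algebra homomorphism. Since $X$ is affine, it corresponds to a morphism $m \colon X \times X \to X$ of affine varieties. The key compatibility is that the inclusions $\KK[X] \hookrightarrow \KK[G]$ and $\KK[X]\otimes\KK[X] \hookrightarrow \KK[G]\otimes\KK[G]$ intertwine $\mu$ and $\mu|_{\KK[X]}$ by construction; dualizing this, the restriction of $m$ to $G \times G$ equals the group multiplication of $G$. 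In particular $m(1,x) = m(x,1) = x$ for every $x \in G$.

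Next I would verify the monoid axioms by density. Since $X$ is irreducible and $G$ is a non-empty open subset, $G$ is dense in $X$, hence $G^k$ is dense in $X^k$ for every $k$ (products of irreducible varieties over an algebraically closed field remain irreducible). For associativity, the two morphisms $m \circ (m \times \id)$ and $m \circ (\id \times m)$ from $X \times X \times X$ to $X$ agree on the dense open subset $G \times G \times G$ by associativity of the group law on $G$, and by separatedness of $X$ they must agree on all of $X \times X \times X$. The same argument, applied to the morphisms $x \mapsto m(1,x)$, $x \mapsto m(x,1)$, and $\id_X$ from $X$ to $X$, yields the unit axiom.

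There is no real obstacle: the substantive hypothesis $\mu(\KK[X]) \subseteq \KK[X] \otimes \KK[X]$ has already done the hard work by producing $m$ in the first place. The only point to keep in mind is the standard principle that two morphisms of separated varieties agreeing on a dense open subset must coincide, which is exactly what promotes the group identities on $G$ to monoid identities on $X$.
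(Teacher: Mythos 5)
Your proof is correct and follows essentially the same route as the paper: the hypothesis $\mu(\KK[X])\subseteq\KK[X]\otimes\KK[X]$ produces the morphism $m\colon X\times X\to X$ extending the group law, and associativity plus the unit axiom then propagate from the dense open subset $G^k\subseteq X^k$ by the standard separatedness/density principle. The paper's proof is just a more terse version of exactly this argument.
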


\begin{proof}
If $\mu(\KK[X]) \subseteq \KK[X] \otimes \KK[X]$, then the dual map $X \times X \to X$ is a morphism, which restricts to the group multiplication $G \times G \to G$. Since the former is associative and admits a unity, the same holds for~$X \times X \to X$. 
\end{proof}

Now let $X_\sigma$ be an affine toric variety, $p$ be the primitive vector on a ray~$\rho$ of the cone $\sigma \subseteq N_\QQ$, and $e_1, e_2 \in M$ be Demazure roots corresponding to~$p$. First let us prove that formula~\eqref{comult_X_eq} defines a map to $\KK[X_\sigma] \otimes \KK[X_\sigma]$. 
We have
\begin{equation}
\label{comult_X_binom_eq}
\chi^u \mapsto \chi^u \otimes \chi^u \, (1 \otimes \chi^{e_1} + \chi^{e_2} \otimes 1)^{\langle p, u\rangle} = \sum\limits_{i+j=\langle p, u\rangle} \begin{pmatrix}\langle p, u\rangle\\i\end{pmatrix} \chi^{u+ie_2} \otimes \chi^{u+je_1}.
\end{equation}
Notice that, according to the definition of a Demazure root, $\langle p, u+ie_2\rangle = \langle p,u \rangle - i \ge 0$, and for the primitive vector $p'$ on any other ray of the cone~$\sigma$ we have $\langle p', u+ie_2\rangle = \langle p',u \rangle + i \langle p', e_2\rangle \ge 0 + 0 = 0$ as well. So $\chi^{u+ie_2} \in \KK[X_\sigma]$. Similarly, $\chi^{u+je_1} \in \KK[X_\sigma]$.

Consider the natural open embedding of the toric variety $X_\rho$ with the cone~$\rho$ into~$X_\sigma$, see subsection~\ref{Urho_subsec}. Let us show that the unity of the given multiplication is the point $x_\rho$, see subsection~\ref{Torb_subsec}. Recall that by the definition of this point for $u \in S_\rho$ we have $\chi^u(x_\rho) = 1$ if $u \in \rho^\perp$ and $\chi^u(x_\rho)=0$ otherwise. Suppose $x \in X_\sigma$, $u \in S_\sigma$. Then 
\[\chi^u(x * x_\rho) = \sum\limits_{i+j=\langle p, u\rangle} \begin{pmatrix}\langle p, u\rangle\\i\end{pmatrix} \chi^{u+ie_2}(x) \chi^{u+je_1}(x_\rho) = \chi^u(x).\]
Indeed, the last equality holds since $\langle p, u+je_1\rangle = 0$ if and only if $j = \langle p,u\rangle$. 

Let us prove that the set of invertible elements in the monoid $X_\sigma$ is a subset of~$X_\rho$. Consider a point $x \in X_\sigma$, and assume that there exists the inverse $y = x^{-1} \in X_\sigma$. Then for any $u \in S_\sigma$ in the relative interior of the facet $\rho^\perp \cap \sigma^\vee$ we have $\langle p,u\rangle = 0$, so formula~\eqref{comult_X_binom_eq} gives
\[
\chi^u(x*y) = \chi^u(x)\chi^u(y), 
\]
that has to equal~$1$ since \[\chi^u(x_\rho) = \begin{cases}1 \;\text{ if } u \in \rho^\perp,\\
0 \;\text{ otherwise}.\end{cases}\] So $\chi^u(x) \in \KK^\times$ if $u$ is in the relative interior of the facet $\rho^\perp \cap \sigma^\vee$. According to the description of the open subset~$X_\rho$ in subsection~\ref{Urho_subsec}, we obtain $x \in X_\rho$. 

Finally, since $e_1, e_2$ are Demazure roots with $\langle p, e_1\rangle = \langle p, e_2\rangle = -1$, it follows that $-e_1, -e_2 \in \rho^\vee$, so the characters $\falpha := \chi^{-e_1}$ and $\fbeta := \chi^{-e_2}$ are regular functions on~$X_\rho$. 
The restriction of comultiplication~\eqref{comult_X_eq} to the open subset~$X_\rho$ coincides with formula~\eqref{comult_Gchi_eq2}, so all points in $X_\rho$ are invertible and $X_\rho \cong G_\chi$, where $\chi = \falpha/\fbeta$. 
Thus $X_\sigma$ is a monoid with the group of invertible elements $X_\rho$ of corank one by Lemma~\ref{comult_restr_lem}. This completes the proof of Theorem~\ref{classif_theor}.

%%%%%%%%%%%%

\section{Idempotent elements}

The classification of affine monoids of corank one from Theorem~\ref{classif_theor} allows us to study algebraic properties of such monoids. 

\medskip

Let us give the classification of \emph{idempotents} in $X$, i.e. elements $x \in X$ such that $x*x = x$. 

\begin{theorem}
\label{idemp_theor}
In the notation of Theorem~\ref{classif_theor}, for any face~$\gamma$ of the cone $\sigma$ the set of idempotents $E_\gamma$ in the orbit $O_\gamma$ in~$X$ is as follows:
\begin{enumerate}
  \item $E_\gamma = \{x_\gamma\}$ if $\rho$ is a ray of $\gamma$;
  \item $E_\gamma = \varnothing$ if $\rho$ is not a ray of $\gamma$ and $e_1, e_2 \notin \gamma^\perp$;
  \item $E_\gamma = \varnothing$ if $\rho$ is not a ray of $\gamma$ and $e_1, e_2 \in \gamma^\perp$;
  \item $E_\gamma = O_\gamma \cap \,\{\chi^u = 1 \;\; \forall u \in \cone(\gamma, \rho)^\perp \cap S_\sigma\}$ otherwise. 
\end{enumerate}
\end{theorem}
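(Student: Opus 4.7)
The plan is to analyze the idempotent condition $\chi^u(x*x)=\chi^u(x)$ case by case, using the expanded comultiplication from~\eqref{comult_X_binom_eq}:
\[\chi^u(x)=\sum_{i+j=\langle p,u\rangle}\binom{\langle p,u\rangle}{i}\chi^{u+ie_2}(x)\,\chi^{u+je_1}(x).\]
For $x\in O_\gamma$ and $v\in S_\sigma$, $\chi^v(x)$ is nonzero iff $v\in\gamma^\perp$, and $v\mapsto\chi^v(x)$ restricts to a group character on $\gamma^\perp\cap M$. The key in each case is to determine which terms of the sum survive according to whether $u+ie_2$ and $u+je_1$ lie in $\gamma^\perp$, and then to solve the resulting system.

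For the equations on $u\in\gamma^\perp\cap S_\sigma$: in case (a), $p\in\gamma$ forces $\langle p,u\rangle=0$, so the sum collapses to $\chi^u(x)=\chi^u(x)^2$, hence $\chi^u(x)=1$, which uniquely identifies $x=x_\gamma$. In case (b), I would take $u$ in the relative interior of $\gamma^\perp\cap\sigma^\vee$ so that $n:=\langle p,u\rangle>0$; a nonzero term requires $u+ie_2,u+je_1\in\gamma^\perp$, which forces $i=j=0$ and contradicts $i+j=n$, so the right-hand side vanishes while $\chi^u(x)\neq0$, giving $E_\gamma=\varnothing$. In case (d), with say $e_2\in\gamma^\perp$ and $e_1\notin\gamma^\perp$, only the term $j=0$, $i=n$ survives, so the equation is $\chi^u(x)\,\chi^{e_2}(x)^{\langle p,u\rangle}=1$; since $u\mapsto\langle p,u\rangle$ surjects $\gamma^\perp\cap M$ onto $\ZZ$ with kernel $\gamma^\perp\cap p^\perp\cap M=\cone(\gamma,\rho)^\perp\cap M$, this system is equivalent to $\chi^v(x)=1$ for $v\in\cone(\gamma,\rho)^\perp\cap S_\sigma$, as claimed. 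The equations for $u\notin\gamma^\perp$ must also be verified: in case (a), $\gamma^\perp\subseteq p^\perp$ forces $\langle p,u+ie_2\rangle=\langle p,u+je_1\rangle=0$, so $i=j=\langle p,u\rangle$, which together with $i+j=\langle p,u\rangle$ yields $i=j=0$ and hence $u\in\gamma^\perp$, a contradiction; in case (d), $u+ie_2\in\gamma^\perp$ is equivalent to $u\in\gamma^\perp$, so every term vanishes.

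Case (c) is the main obstacle. With $e_1,e_2\in\gamma^\perp$ every term in the sum survives, and for $u$ with $n:=\langle p,u\rangle>0$ the equation collapses to $\chi^u(x)\cdot s^n=1$, where $s:=\chi^{e_1}(x)+\chi^{e_2}(x)$. Applying the same relation to $u':=u+e_2$, which lies in $\gamma^\perp\cap S_\sigma$ by the Demazure root property and satisfies $\langle p,u'\rangle=n-1$, yields $\chi^u(x)\,\chi^{e_2}(x)\cdot s^{n-1}=1$. Dividing the two relations gives $s=\chi^{e_2}(x)$, hence $\chi^{e_1}(x)=0$; but $e_1\in\gamma^\perp$ forces $\chi^{e_1}(x)\in\KK^\times$, a contradiction, so $E_\gamma=\varnothing$.
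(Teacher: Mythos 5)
Your proposal is correct and follows essentially the same strategy as the paper's proof: for $x \in O_\gamma$ you determine which binomial terms of $\chi^u(x*x)$ survive according to whether $u+ie_2,\,u+je_1$ lie in $\gamma^\perp$ (which rests on the Demazure-root inequalities $\langle p',e_k\rangle \ge 0$ for the rays $p'$ of $\gamma$, all distinct from $p$), and then solve the resulting system on $\gamma^\perp \cap S_\sigma$ while checking the equations for $u \notin \gamma^\perp$ are automatic. The only divergence is in case (c), where the paper chooses $u \in \gamma^\perp \cap S_\sigma$ with $\langle p,u\rangle = 1$ and reads off the contradiction $\chi^u(x) = 2\chi^u(x)$ directly, whereas you compare the relations $\chi^u(x)s^n = 1$ and $\chi^{u+e_2}(x)s^{n-1} = 1$ to force $\chi^{e_1}(x) = 0$; both arguments are valid.
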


\begin{remark}
\label{def_Dem_cone_rem}
Let $e$ be a Demazure root corresponding to the ray~$\rho$ of the cone~$\sigma$, which is not a ray of the cone~$\gamma$, and $e \in \gamma^\perp$. Then $\cone(\gamma, \rho)$ is a face of~$\sigma$, see subsection~\ref{LND_Dem_subsec}. 
\end{remark}

A proof of Theorem~\ref{idemp_theor} is given at the end of this section. 

\begin{example}
\label{idempA2_example}
Let $X = \AA^2$ and the multiplication be given by formula
\[(x_1, x_2) * (y_1, y_2) = (x_1y_1, \, x_1^ay_2 + x_2), \quad a > 0.\]
Denote a basis of $N$ by $s_1, s_2$. According to the proof of Theorem~\ref{affspace_theor}, the monoid corresponds to the cone $\sigma = \cone(s_1, s_2)$, the ray $\rho$ with the primitive vector ${p = s_2 \in N}$ and Demazure roots $e_1 = (a,-1) \in M$, $e_2 = (0,-1) \in M$, see Figure~\ref{A2pict}. 

\begin{figure}[h]
\begin{center}
\tikzset{every picture/.style={line width=0.75pt}} %set default line width to 0.75pt        

\begin{tikzpicture}[x=0.75pt,y=0.75pt,yscale=-1,xscale=1]
%uncomment if require: \path (0,300); %set diagram left start at 0, and has height of 300
    \tikzstyle{conefill} = [fill=blue!20, draw = black!40]
    \tikzstyle{conefill_gamma} = [fill=red!20, draw = black!70]

\coordinate (e1) at (20,0);
\coordinate (e2) at (0,-20);

%%%%%%%%%%%%%%%%% pict in N %%%%%%%%%%%%

\coordinate (O) at (120,230);
\coordinate (Oxmax) at ($(O)+5*(e1)$);
\coordinate (Oymax) at ($(O)+5*(e2)$);
\coordinate (Oxmin) at ($(O)-(e1)$);
\coordinate (Oymin) at ($(O)-2*(e2)$);
\coordinate (A) at ($(O)+4.5*(e1)$);
\coordinate (B) at ($(O)+4.5*(e2)$);
\coordinate (T) at ($(O)+4*(e2)-2*(e1)$);

\filldraw [conefill] 
(O) node[below left] {$0$} .. controls (O) and (B) .. node[very near end,below left] {$\rho$} (B)  .. node [midway,below left,blue] {$\sigma$} controls +(60,-10) and +(10,-60) .. (A) .. node[very near start,below left] {$\gamma$} controls (A) and (O) .. (O) -- cycle;

\draw [->,color=black!50] (Oxmin) -- (Oxmax);
\draw [->,color=black!50] (Oymin) -- (Oymax);

\draw[->,very thick] (O) -- ($(O)+(e1)$) node [below] {\small $s_1$};
\draw[->,very thick] (O) -- ($(O)+(e2)$) node [left] {\small $p=s_2$};
\draw[black!60] (T) node {$N$};

%%%%%%%%%%%%%%%%% pict in M %%%%%%%%%%%%

\coordinate (O) at ($(O)+10*(e1)$);
\coordinate (Oxmax) at ($(O)+5*(e1)$);
\coordinate (Oymax) at ($(O)+5*(e2)$);
\coordinate (Oxmin) at ($(O)-(e1)$);
\coordinate (Oymin) at ($(O)-2*(e2)$);
\coordinate (A) at ($(O)+4.5*(e1)$);
\coordinate (B) at ($(O)+4.5*(e2)$);
\coordinate (T) at ($(O)+4*(e2)-2*(e1)$);

\filldraw [conefill] 
(O) node[below left] {$0$} .. controls (O) and (B) .. node[very near end,below left] {$\gamma^\perp$} (B)  .. node [midway,below,blue] {$\sigma^\vee$} controls +(60,-10) and +(10,-60) .. (A) .. node[very near start,above left] {$\rho^\perp$} controls (A) and (O) .. (O) -- cycle;

\draw [->,color=black!50] (Oxmin) -- (Oxmax);
\draw [->,color=black!50] (Oymin) -- (Oymax);

\draw[->,very thick] (O) -- ($(O)+(e1)$) node [below] {\small $u_1$};
\draw[->,very thick] (O) -- ($(O)+(e2)$) node [left] {\small $u_2$};
\draw[black!60] (T) node {$M$};
\node[draw,circle,teal,inner sep=1pt,fill] at ($(O)+3*(e1)-(e2)$) {};
\node[teal,right] at ($(O)+3*(e1)-(e2)$) {\scriptsize $e_1=(a,-1)$};
\node[draw,circle,teal,inner sep=1pt,fill] at ($(O)-(e2)$) {};
\node[teal,below right] at ($(O)-(e2)$) {\scriptsize $e_2=(0,-1)$};

%%%%%%%%%%%%%%%%% pict in A^2 %%%%%%%%%%%%

\coordinate (O) at ($(O)+10*(e1)$);
\coordinate (Oxmax) at ($(O)+5*(e1)$);
\coordinate (Oymax) at ($(O)+5*(e2)$);
\coordinate (Oxmin) at ($(O)-2*(e1)$);
\coordinate (Oymin) at ($(O)-2*(e2)$);
\coordinate (A) at ($(O)+4.5*(e1)$);
\coordinate (B) at ($(O)+4.5*(e2)$);
\coordinate (T) at ($(O)+4*(e2)-(e1)$);

\draw [->,color=black!50] (Oxmin) -- (Oxmax);
\draw [->,color=black!50] (Oymin) -- (Oymax);

\draw[very thick,red] (Oymin) -- (B) node [below right] {$E_\gamma=O_\gamma$};
\draw[very thick,teal] (Oxmin) -- (A) node [above] {$O_\rho$};
\node[draw,circle,blue,inner sep=1pt,fill] at (O) {};
\node[draw,circle,red,inner sep=1pt,fill] at ($(O)+1.5*(e1)$) {};
\node[red,below right] at ($(O)+(e1)$) {\scriptsize $(1,0) \in E_\rho$};
\draw (O) node[above right,blue] {\scriptsize $E_\sigma=O_\sigma$} node[below left,blue] {\scriptsize $(0,0)$};
\draw ($(O)+3*(e1)+2.5*(e2)$) node {$O_0$};
\draw[black!60] (T) node {$\AA^2$};
%\node[draw,circle,teal,inner sep=1pt,fill] at ($(O)-(e2)$) {};
%\node[teal,below right] at ($(O)-(e2)$) {\scriptsize $e_2=(0,-1)$};

%\coordinate [label=above:{\textcolor{blue!60}{\scriptsize $u$}},label=below:{\tiny 0,1}] (u) at ($(O)+(Ou)$);
%\node[draw,circle,blue!60,inner sep=1pt,fill] at (u) {};
%\node[draw,circle,blue!60,inner sep=1pt,fill] at (end1) {};
%\node[draw,circle,blue!60,inner sep=1pt,fill] at (end2) {};
%\coordinate (e1) at ($1/3*(end1)-1/3*(u)$);
%\coordinate (e2) at ($1/3*(end2)-1/3*(u)$);
%\draw[thick,teal,-latex] (u) -- node [midway,above] {$e_1$} ($(u)+(e1)$);
%\draw[thick,teal,-latex] ($(u)+(e1)$) -- ($(u)+2*(e1)$);
%\draw[thick,teal,-latex] ($(u)+2*(e1)$) -- (end1);
%\draw[thick,teal,-latex] (u) -- node [midway,below] {$e_2$} ($(u)+(e2)$);
%\draw[thick,teal,-latex] ($(u)+(e2)$) -- ($(u)+2*(e2)$);
%\draw[thick,teal,-latex] ($(u)+2*(e2)$) -- (end2);

\end{tikzpicture}
\end{center}
\caption{The cones, $\TT$-orbits, and idempotents in $X$ in Example~\ref{idempA2_example}.}
\label{A2pict}
\end{figure}
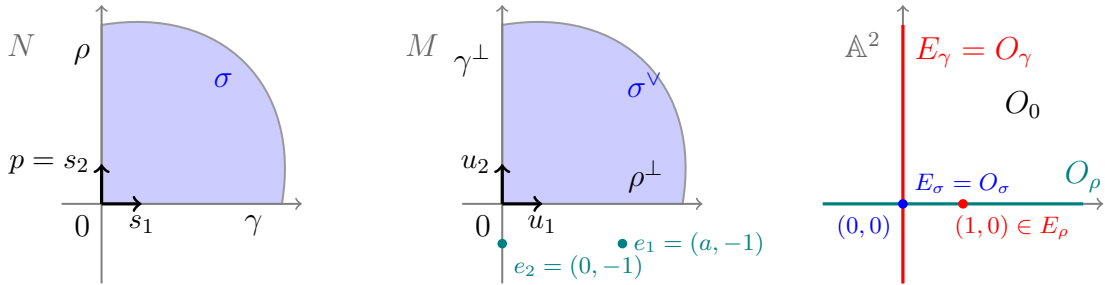

The cone $\sigma$ has four faces $0, \,\rho, \,\gamma = \cone(s_1)$, and $\sigma$. According to Theorem~\ref{idemp_theor}, idempotents in~$X$ belong to toric orbits $O_\rho$, $O_\sigma$ in item~(a), and $O_\gamma$ in item~(d). In $O_\rho$ and~$O_\sigma$ we obtain idempotents $x_\rho = (1,0)$ and $x_\sigma = (0,0)$. The set $E_\gamma$ of idempotents in $O_\gamma$ is given by equations $\chi^u = 1$ for $u \in \cone(\gamma, \rho)^\perp = 0$, whence $E_\gamma=O_\gamma$. So the set of idempotents consists of the line $(0,x_2)$, $x_2 \in \KK$, and the point $(1,0)$. It coincides with the solutions of $(x_1^2, \, x_1^ax_2 + x_2) = (x_1, x_2)$. 
\end{example}

\begin{example}
\label{idempA4_example}
Let $X = \AA^4$ and the multiplication rule be as follows: 
\begin{equation}
\label{mult_A4ex_eq}
(x_1, x_2, x_3, x_4) * (y_1, y_2, y_3, y_4) = (x_1y_1, \, x_2y_2, \, x_3y_3, \, x_3^ay_4 + y_2^by_3^cx_4), \quad a,b,c > 0.
\end{equation}
Denote a basis of $N$ by $s_i$, $1 \le i \le 4$. According to the proof of Theorem~\ref{affspace_theor}, the monoid corresponds to the cone $\sigma = \cone(s_1, s_2,s_3,s_4)$, the ray $\rho$ with the primitive vector ${p = s_4 \in N}$, and Demazure roots $e_1 = (0,b,c,-1) \in M$, $e_2 = (0,0,a,-1) \in M$. 

For any subset $I \subseteq \{1,2,3,4\}$, consider the face $\gamma = \gamma(I) = \cone\limits_{i \in I} s_i$ of the cone~$\sigma$, which corresponds to the torus orbit
\[O_{\gamma} = \{(x_1, x_2, x_3, x_4) \mid x_i = 0 \text{ if } i \in I \text{\; and \;} x_i \ne 0 \text{ if } i \notin I\}.\]
The point $x_{\gamma} \in O_{\gamma}$ is the point with $i$-th coordinate $0$ if $i \in I$ and $1$ if $i \notin I$. Notice that $e_1 \in \gamma^\perp$ if and only if $I \subseteq \{1\}$, and $e_2 \in \gamma^\perp$ if and only if $I \subseteq \{1, 2\}$. 

Let us find the set of idempotents in an orbit $O_\gamma$, $\gamma = \gamma(I)$, comparing results of Theorem~\ref{idemp_theor} and direct computations. By equation~\eqref{mult_A4ex_eq}, a point $(x_1, x_2, x_3, x_4) \in \AA^4$ is an idempotent if
\begin{equation}
\label{mult_A4ex_idemp_eq}
(x_1^2, x_2^2, x_3^2, x_3^ax_4 + x_2^bx_3^cx_4) = (x_1, x_2, x_3, x_4).
\end{equation}

(a) Let $\rho$ be a ray of $\gamma$, i.e. $4 \in I$ (there are eight such cones $\gamma$). Then by Theorem~\ref{idemp_theor}, there exists exactly one idempotent $x_\gamma = (x_1, x_2, x_3, 0)$ in~$O_\gamma$, where $x_i = 0$ if $i \in I$ and $x_i = 1$ otherwise. The same follows from equation~\eqref{mult_A4ex_idemp_eq} under the assumption $x_4 = 0$. 

(b) Let $\rho$ be not a ray of $\gamma$ and $e_1, e_2 \notin \gamma^\perp$, i.e. $4 \notin I$, $3 \in I$. Then there are no idempotents in $O_\gamma$ by Theorem~\ref{idemp_theor}. This agrees with formula~\eqref{mult_A4ex_idemp_eq}: from $x_4 \ne 0, x_3 = 0$ it follows that $x_3^ax_4 + x_2^bx_3^cx_4 = 0 \ne x_4$. 

(c) Let $\rho$ be not a ray of $\gamma$ and $e_1, e_2 \in \gamma^\perp$, i.e. $I \subseteq \{1\}$. There are no idempotents in such $O_\gamma$ by Theorem~\ref{idemp_theor} as well. Also, from equation~\eqref{mult_A4ex_idemp_eq} and $x_2, x_3, x_4 \ne 0$ it follows that $x_2 = x_3 = 1, \, x_4 + x_4 = x_4, \, x_4 \ne 0$, a contradiction. 

(d) Let $\rho$ be not a ray of $\gamma$ and $e_1 \notin \gamma^\perp, e_2 \in \gamma^\perp$, i.e. $I \subseteq \{1, 2\}$ and $2 \in I$. Consider two possible cases.

(d1) Let $I = \{1,2\}$, i.e. $O_\gamma = \{(0,0,x_3,x_4) \mid x_3, x_4 \in \KK^\times\}$. Denote by $u_i, \, 1 \le i \le 4$, the basis of~$M$ dual to $s_i, \, 1 \le i \le 4$. The semigroup of integer points in the face \[\cone(\gamma, \rho)^\perp \cap \sigma^\vee = \cone(s_1, s_2, s_4)^\perp \cap \cone(u_1, u_2, u_3, u_4) = \cone(u_3)\] of the dual cone $\sigma^\vee$ is generated by~$u_3$, so by Theorem~\ref{idemp_theor} the set of idempotents in $O_\gamma$ equals
\[E_\gamma = O_\gamma \cap \,\{\chi^u = 1 \;\; \forall u \in \cone(\gamma,\rho)^\perp \cap S_\sigma\} = \{(0,0,1,x_4) \mid x_4 \in \KK^\times\}.\]
This agrees with equation~\eqref{mult_A4ex_idemp_eq}, which is equivalent to $x_3 = 1$ under the assumptions $x_1 = x_2 = 0$, $x_3, x_4 \ne 0$. 

(d2) Let $I = \{2\}$. In the same way we obtain
\[\cone(\gamma, \rho)^\perp \cap \sigma^\vee = \cone(u_1, u_3)\]
and 
\[E_\gamma = O_\gamma \cap \,\{\chi^u = 1 \;\; \forall u \in \gamma^\perp \cap \rho^\perp \cap S_\sigma\} = \{(1,0,1,x_4) \mid x_4 \in \KK^\times\}.\]

Gathering all cases together, we obtain that the set of idempotents consists of two lines $(0,0,1,x_4)$ and $(1,0,1,x_4)$, $x_4 \in \KK$, and six points 
\[(0, 0, 0, 0), (0, 1, 0, 0), (0, 1, 1, 0), (1, 0, 0, 0), (1, 1, 0, 0), (1, 1, 1, 0).\]
\end{example}

It is known that an irreducible commutative algebraic monoid in characteristic zero has a finite number of idempotents~\cite[Section~3.5.3, Exercise 16b]{Re2005}. This agrees with our result. 

\begin{corollary}
\label{idemp_comm_cor}
In the notation of Theorem~\ref{classif_theor}, the number of idempotents in $X$ is finite if and only if $e_1, e_2$ belong to $\gamma^\perp$ simultaneously, where $\gamma$ ranges over faces of~$\sigma$ such that $\rho$ is not a ray of~$\gamma$. In such a case, the number of idempotents equals the number of faces of~$\sigma$ that contain~$\rho$. In particular, this holds if $X$ is commutative. 
\end{corollary}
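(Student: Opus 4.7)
The strategy is to decompose the set of all idempotents $E(X)$, via Theorem~\ref{idemp_theor}, as the disjoint union of the sets $E_\gamma$ as $\gamma$ runs over the (finitely many) faces of the cone $\sigma$. Cases (a), (b), (c) of that theorem contribute finitely many idempotents per face — namely $1$, $0$, $0$ respectively — so the finiteness of $E(X)$ is controlled entirely by case (d), which applies precisely when $\rho$ is not a ray of $\gamma$ and exactly one of $e_1, e_2$ lies in $\gamma^\perp$. The main step will be to show that $E_\gamma$ is always infinite in case (d). Once this is granted, $E(X)$ is finite if and only if case (d) never arises, i.e.\ for every face $\gamma$ with $\rho\not\subseteq\gamma$ one has $e_1\in\gamma^\perp\Leftrightarrow e_2\in\gamma^\perp$; and in that situation only case (a) contributes, giving one idempotent $x_\gamma$ for each face of $\sigma$ containing $\rho$ as a ray.

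The crux is therefore the analysis of $E_\gamma$ in case (d). I would identify the orbit $O_\gamma$ with the algebraic torus of character lattice $\gamma^\perp\cap M$, and note that the equations ``$\chi^u=1$ for all $u\in\cone(\gamma,\rho)^\perp\cap S_\sigma$'' cut out a closed subtorus of $O_\gamma$ isomorphic to $\Hom(Q,\KK^\times)$, where $Q$ is the quotient of $\gamma^\perp\cap M$ by the sublattice generated by those $u$. A standard toric argument — integer points of a face of $\sigma^\vee$ span the corresponding lattice, since by Remark~\ref{def_Dem_cone_rem} $\cone(\gamma,\rho)^\perp\cap\sigma^\vee$ really is a face of $\sigma^\vee$ — identifies this sublattice with $\gamma^\perp\cap\rho^\perp\cap M$, which is precisely the kernel of the pairing $u\mapsto\langle p,u\rangle$ from $\gamma^\perp\cap M$ to $\ZZ$. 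In case (d) some $e_i\in\gamma^\perp$ satisfies $\langle p,e_i\rangle=-1$, so this pairing is surjective; hence $Q\cong\ZZ$ and $E_\gamma\cong\KK^\times$ is infinite.

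For the final assertion, commutativity of $X$ means that the comultiplication~\eqref{comult_X_eq} is fixed by the flip on $\KK[X_\sigma]\otimes\KK[X_\sigma]$. Picking (as in the argument surrounding Lemma~\ref{comult_Gchi_lem}) any $u\in S_\sigma$ with $\langle p,u\rangle=1$ — such $u$ exists since for any $v\in S_\sigma$ with $k=\langle p,v\rangle>0$ the element $v+(k-1)e_1$ still lies in $S_\sigma$ and pairs to $1$ against $p$ — and expanding, the equality of the two sides at this degree reads $\chi^{u+e_2}\otimes\chi^u+\chi^u\otimes\chi^{u+e_1}=\chi^u\otimes\chi^{u+e_2}+\chi^{u+e_1}\otimes\chi^u$, which forces $e_1=e_2$. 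The face-wise condition is then trivial, so commutative monoids are covered.

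The main obstacle is the dimension count in case (d): one has to verify that the quotient $Q$ is genuinely infinite cyclic rather than merely finite. This is exactly where the defining property $\langle p,e_i\rangle=-1$ of a Demazure root plays an essential role, producing an element that maps to $-1$ under $\langle p,\cdot\rangle$ and hence makes the map onto $\ZZ$.
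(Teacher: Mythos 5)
Your proof is correct and follows essentially the same route as the paper, which simply observes that the number of idempotents is finite if and only if case~(d) of Theorem~\ref{idemp_theor} never occurs, and that commutativity forces $e_1=e_2$. The only addition is your explicit verification that $E_\gamma$ is infinite in case~(d) via the quotient lattice $Q\cong\ZZ$ — a point the paper leaves implicit here (it is confirmed later by Proposition~\ref{idemp_geom_prop}, where $E_\gamma$ is shown to be a single $R_p$-orbit $\cong\KK^\times$) — and your check is sound.
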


\begin{proof}
The number of idempotents is finite if and only if case $(d)$ is impossible. If $X$ is commutative, then $e_1 = e_2$. 
\end{proof}

\begin{proof}[Proof of Theorem~\ref{idemp_theor}]
Let $x \in O_\gamma$ be an idempotent in~$X_\sigma$. From now on $u \in S_\sigma$. First recall that, according to~\eqref{comult_X_binom_eq}, for any $x_1, x_2 \in X_\sigma$ the multiplication is given by
\begin{equation}
\label{comult_X_binom_eq2}
\chi^u(x_1 * x_2) = \sum\limits_{i+j=\langle p, u\rangle} \begin{pmatrix}\langle p, u\rangle\\i\end{pmatrix} \chi^{u+ie_2}(x_1) \chi^{u+je_1}(x_2).
\end{equation}
Informally, $u+ie_1$ and $u+ie_2$, $0 \le i \le \langle p,u\rangle$, form two segments in $S_\sigma$ between~$u$ and the facet $\rho^\perp \cap \sigma^\vee$, see Figure~\ref{4cases_pict} below. As $i$ increases, the element $u + ie_k$, $k = 1,2$, becomes closer to the facet $\rho^\perp \cap \sigma^\vee$ and not closer to all other facets by the definition of a Demazure root, see subsection~\ref{invproof_subsect}. In particular, $u + ie_k \in \rho^\perp$ if and only if $i = \langle p,u\rangle$, where $k = 1,2$. 

First consider $u \in \rho^\perp$, see Figure~\ref{4cases_pict}. Then the sum in~\eqref{comult_X_binom_eq2} has only one summand, so for the idempotent~$x$ we have $\chi^u(x) = \chi^u(x*x) = \chi^u(x)\chi^u(x)$.
Thus
\begin{equation}
\label{idemp_rhoperp_eq}
\chi^u(x) = 0,1 \quad \text{ for any } u \in \rho^\perp. 
\end{equation}

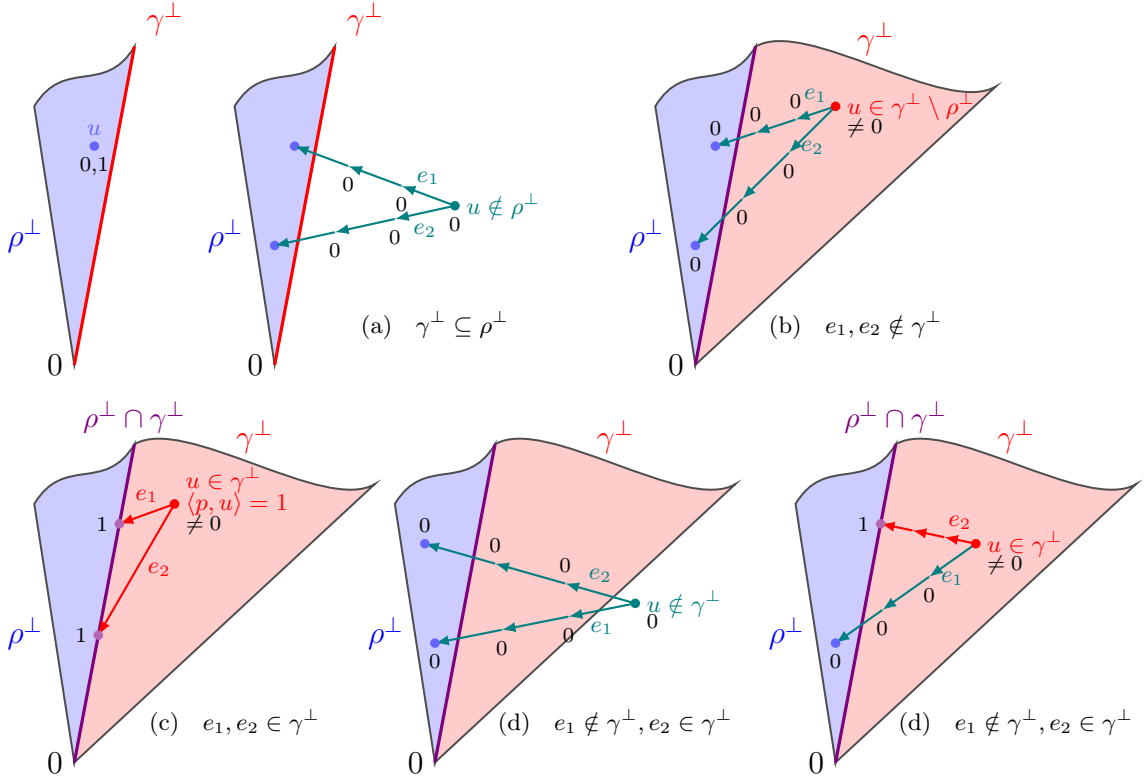
\begin{figure}[h]
\begin{center}
\tikzset{every picture/.style={line width=0.75pt}} %set default line width to 0.75pt        

\begin{tikzpicture}[x=0.75pt,y=0.75pt,yscale=-1,xscale=1]
%uncomment if require: \path (0,300); %set diagram left start at 0, and has height of 300
    \tikzstyle{conefill} = [fill=blue!20, draw = black!70]
    \tikzstyle{conefill_gamma} = [fill=red!20, draw = black!70]

\coordinate (OA) at (-20,-130);
\coordinate (OB) at (30,-160);

%%%%%%%%%%%%%%%%% case u \in rho perp %%%%%%%%%%%%

\coordinate (O) at (120,230);
\coordinate (Ou) at (10,-110);
\coordinate (A) at ($(O)+(OA)$);
\coordinate (B) at ($(O)+(OB)$);

\filldraw [conefill] 
(O) node[left] {$0$} .. controls (O) and (B) .. (B) .. controls +(-15,25) and +(15,-25) .. (A) .. node [midway,left,blue] {$\rho^\perp$} controls (A) and (O) .. (O) -- cycle;
\draw[very thick,red] (O) -- (B) node [above right] {$\gamma^\perp$};

\coordinate [label=above:{\textcolor{blue!60}{\scriptsize $u$}},label=below:{\tiny 0,1}] (u) at ($(O)+(Ou)$);
\node[draw,circle,blue!60,inner sep=1pt,fill] at (u) {};

%%%%%%%%%%%%%% case a II %%%%%%%%%%%%%%%
\coordinate (O) at (220,230);
\coordinate (Ou) at (90,-80);
\coordinate (Oend1) at (10,-110);
\coordinate (Oend2) at (0,-60);
\coordinate (A) at ($(O)+(OA)$);
\coordinate (B) at ($(O)+(OB)$);
\coordinate (end1) at ($(O)+(Oend1)$);
\coordinate (end2) at ($(O)+(Oend2)$);

\filldraw [conefill] 
(O) node[left] {$0$} .. controls (O) and (B) .. (B) .. controls +(-15,25) and +(15,-25) .. (A) .. node [midway,left,blue] {$\rho^\perp$} controls (A) and (O) .. (O) -- cycle;
\draw[very thick,red] (O) -- (B) node [above right] {$\gamma^\perp$};

\coordinate [label=right:{\textcolor{teal}{\scriptsize $u \notin \rho^\perp$}}] (u) at ($(O)+(Ou)$);
\node[draw,circle,teal,inner sep=1pt,fill] at (u) {};
\node[draw,circle,blue!60,inner sep=1pt,fill] at (end1) {};
\node[draw,circle,blue!60,inner sep=1pt,fill] at (end2) {};
\coordinate (e1) at ($1/3*(end1)-1/3*(u)$);
\coordinate (e2) at ($1/3*(end2)-1/3*(u)$);
\draw[thick,teal,-latex] (u) node [below] {\tiny\textcolor{black}{0}} -- node [midway,above] {\scriptsize $e_1$} ($(u)+(e1)$);
\draw[thick,teal,-latex] ($(u)+(e1)$) node [below] {\tiny\textcolor{black}{0}} -- ($(u)+2*(e1)$);
\draw[thick,teal,-latex] ($(u)+2*(e1)$) node [below] {\tiny\textcolor{black}{0}} -- (end1);
\draw[thick,teal,-latex] (u) -- node [midway,below] {\scriptsize $e_2$} ($(u)+(e2)$);
\draw[thick,teal,-latex] ($(u)+(e2)$) node [below] {\tiny\textcolor{black}{0}} -- ($(u)+2*(e2)$);
\draw[thick,teal,-latex] ($(u)+2*(e2)$) node [below] {\tiny\textcolor{black}{0}} -- (end2);
\draw ($(O)+(80,-20)$) node {\scriptsize (a) \; $\gamma^\perp \subseteq \rho^\perp$};

%%%%%%%%%%% case rho not a ray of gamma %%%%%%%%%%
\coordinate (OC) at (150,-140);

%%%%%%%%%%%%%% case b %%%%%%%%%%%%%%%
\coordinate (O) at (430,230);
\coordinate (Ou) at (70,-130);
\coordinate (Oend1) at (10,-110);
\coordinate (Oend2) at (0,-60);
\coordinate (A) at ($(O)+(OA)$);
\coordinate (B) at ($(O)+(OB)$);
\coordinate (C) at ($(O)+(OC)$);
\coordinate (end1) at ($(O)+(Oend1)$);
\coordinate (end2) at ($(O)+(Oend2)$);

\filldraw [conefill] 
(O) node[left] {$0$} .. controls (O) and (B) .. (B) .. controls +(-15,25) and +(15,-25) .. (A) .. node [midway,left,blue] {$\rho^\perp$} controls (A) and (O) .. (O) -- cycle;
\filldraw [conefill_gamma] 
(O) .. controls (O) and (C) .. (C) .. node [midway,above,red] {$\gamma^\perp$} controls +(-25,15) and +(25,-15) .. (B) .. controls (B) and (O) .. (O) -- cycle;
\draw[very thick,violet] (O) -- (B); %node [above right] {$\gamma^\perp$};

\coordinate [label=right:{\textcolor{red}{\scriptsize $u \in \gamma^\perp \setminus \rho^\perp$}}] (u) at ($(O)+(Ou)$);
\coordinate (e1) at ($1/3*(end1)-1/3*(u)$);
\coordinate (e2) at ($1/3*(end2)-1/3*(u)$);
\draw[thick,teal,-latex] (u) node [below right] {\tiny\textcolor{black}{$\ne 0$}} -- node [midway,above] {\scriptsize $e_1$} ($(u)+(e1)$);
\draw[thick,teal,-latex] ($(u)+(e1)$) node [above] {\tiny\textcolor{black}{0}} -- ($(u)+2*(e1)$);
\draw[thick,teal,-latex] ($(u)+2*(e1)$) node [above] {\tiny\textcolor{black}{0}} -- (end1) node [above] {\tiny\textcolor{black}{0}};
\draw[thick,teal,-latex] (u) -- node [midway,below] {\scriptsize $e_2$} ($(u)+(e2)$);
\draw[thick,teal,-latex] ($(u)+(e2)$) node [below] {\tiny\textcolor{black}{0}} -- ($(u)+2*(e2)$);
\draw[thick,teal,-latex] ($(u)+2*(e2)$) node [below] {\tiny\textcolor{black}{0}} -- (end2) node [below] {\tiny\textcolor{black}{0}};
%\coordinate (T) at ($(O)+(20,-40)$); 
\node[draw,circle,red,inner sep=1pt,fill] at (u) {};
\node[draw,circle,blue!60,inner sep=1pt,fill] at (end1) {};
\node[draw,circle,blue!60,inner sep=1pt,fill] at (end2) {};
\draw ($(O)+(80,-20)$) node {\scriptsize (b) \; $e_1, e_2 \notin \gamma^\perp$};

%%%%%%%%%%%%%% case c %%%%%%%%%%%%%%%
\coordinate (O) at (120,430);
\coordinate (Ou) at (50,-130);
\coordinate (Oend1) at ($0.75*(OB)$);
\coordinate (Oend2) at ($0.4*(OB)$);
\coordinate (A) at ($(O)+(OA)$);
\coordinate (B) at ($(O)+(OB)$);
\coordinate (C) at ($(O)+(OC)$);
\coordinate (end1) at ($(O)+(Oend1)$);
\coordinate (end2) at ($(O)+(Oend2)$);

\filldraw [conefill] 
(O) node[left] {$0$} .. controls (O) and (B) .. (B) .. controls +(-15,25) and +(15,-25) .. (A) .. node [midway,left,blue] {$\rho^\perp$} controls (A) and (O) .. (O) -- cycle;
\filldraw [conefill_gamma] 
(O) .. controls (O) and (C) .. (C) .. node [midway,above,red] {$\gamma^\perp$} controls +(-25,15) and +(25,-15) .. (B) .. controls (B) and (O) .. (O) -- cycle;
\draw[very thick,violet] (O) -- (B) node [above] {\small $\rho^\perp \cap \gamma^\perp$};

\coordinate [label=right:{\textcolor{red}{\scriptsize $\langle p, u\rangle = 1$}},label=above right:{\textcolor{red}{\scriptsize $u \in \gamma^\perp$}}] (u) at ($(O)+(Ou)$);
\coordinate (e1) at ($(end1)-(u)$);
\coordinate (e2) at ($(end2)-(u)$);
\draw[thick,red,-latex] (u) node [below right] {\tiny\textcolor{black}{$\ne 0$}} -- node [midway,above] {\scriptsize $e_1$} ($(u)+(e1)$) node [left] {\tiny\textcolor{black}{1}};
%\draw[thick,teal,-latex] ($(u)+(e1)$) node [above] {\tiny\textcolor{black}{0}} -- ($(u)+2*(e1)$);
%\draw[thick,teal,-latex] ($(u)+2*(e1)$) node [above] {\tiny\textcolor{black}{0}} -- (end1) node [above] {\tiny\textcolor{black}{0}};
\draw[thick,red,-latex] (u) -- node [midway,right] {\scriptsize $e_2$} ($(u)+(e2)$) node [left] {\tiny\textcolor{black}{1}};
%\draw[thick,teal,-latex] ($(u)+(e2)$) node [below] {\tiny\textcolor{black}{0}} -- ($(u)+2*(e2)$);
%\draw[thick,teal,-latex] ($(u)+2*(e2)$) node [below] {\tiny\textcolor{black}{0}} -- (end2) node [below] {\tiny\textcolor{black}{0}};
%\coordinate (T) at ($(O)+(20,-40)$); 
\node[draw,circle,red,inner sep=1pt,fill] at (u) {};
\node[draw,circle,violet!60,inner sep=1pt,fill] at (end1) {};
\node[draw,circle,violet!60,inner sep=1pt,fill] at (end2) {};
\draw ($(O)+(80,-20)$) node {\scriptsize (c) \; $e_1, e_2 \in \gamma^\perp$};

%%%%%%%%%%%%%% case d1 %%%%%%%%%%%%%%%
\coordinate (O) at (300,430);
\coordinate (Ou) at (100,-80);%(70,-130);
\coordinate (Oend1) at (-5,-110);
\coordinate (Oend2) at (0,-60);
\coordinate (A) at ($(O)+(OA)$);
\coordinate (B) at ($(O)+(OB)$);
\coordinate (C) at ($(O)+(OC)$);
\coordinate (end1) at ($(O)+(Oend1)$);
\coordinate (end2) at ($(O)+(Oend2)$);

\filldraw [conefill] 
(O) node[left] {$0$} .. controls (O) and (B) .. (B) .. controls +(-15,25) and +(15,-25) .. (A) .. node [midway,left,blue] {$\rho^\perp$} controls (A) and (O) .. (O) -- cycle;
\filldraw [conefill_gamma] 
(O) .. controls (O) and (C) .. (C) .. node [midway,above,red] {$\gamma^\perp$} controls +(-25,15) and +(25,-15) .. (B) .. controls (B) and (O) .. (O) -- cycle;
\draw[very thick,violet] (O) -- (B); %node [above right] {$\gamma^\perp$};

\coordinate [label=right:{\textcolor{teal}{\scriptsize $u \notin \gamma^\perp$}}] (u) at ($(O)+(Ou)$);
\coordinate (e1) at ($1/3*(end1)-1/3*(u)$);
\coordinate (e2) at ($1/3*(end2)-1/3*(u)$);
\draw[thick,teal,-latex] (u) node [below right] {\tiny\textcolor{black}{$0$}} -- node [midway,above] {\scriptsize $e_2$} ($(u)+(e1)$);
\draw[thick,teal,-latex] ($(u)+(e1)$) node [above] {\tiny\textcolor{black}{0}} -- ($(u)+2*(e1)$);
\draw[thick,teal,-latex] ($(u)+2*(e1)$) node [above] {\tiny\textcolor{black}{0}} -- (end1) node [above] {\tiny\textcolor{black}{0}};
\draw[thick,teal,-latex] (u) -- node [midway,below] {\scriptsize $e_1$} ($(u)+(e2)$);
\draw[thick,teal,-latex] ($(u)+(e2)$) node [below] {\tiny\textcolor{black}{0}} -- ($(u)+2*(e2)$);
\draw[thick,teal,-latex] ($(u)+2*(e2)$) node [below] {\tiny\textcolor{black}{0}} -- (end2) node [below] {\tiny\textcolor{black}{0}};
%\coordinate (T) at ($(O)+(20,-40)$); 
\node[draw,circle,teal,inner sep=1pt,fill] at (u) {};
\node[draw,circle,blue!60,inner sep=1pt,fill] at (end1) {};
\node[draw,circle,blue!60,inner sep=1pt,fill] at (end2) {};
\draw ($(O)+(90,-20)$) node {\scriptsize (d) \; $e_1 \notin \gamma^\perp, e_2 \in \gamma^\perp$};

%%%%%%%%%%%%%% case d2 %%%%%%%%%%%%%%%
\coordinate (O) at (500,430);
\coordinate (Ou) at (70,-110);
\coordinate (Oend1) at ($0.75*(OB)$);
\coordinate (Oend2) at (0,-60);
\coordinate (A) at ($(O)+(OA)$);
\coordinate (B) at ($(O)+(OB)$);
\coordinate (C) at ($(O)+(OC)$);
\coordinate (end1) at ($(O)+(Oend1)$);
\coordinate (end2) at ($(O)+(Oend2)$);

\filldraw [conefill] 
(O) node[left] {$0$} .. controls (O) and (B) .. (B) .. controls +(-15,25) and +(15,-25) .. (A) .. node [midway,left,blue] {$\rho^\perp$} controls (A) and (O) .. (O) -- cycle;
\filldraw [conefill_gamma] 
(O) .. controls (O) and (C) .. (C) .. node [midway,above,red] {$\gamma^\perp$} controls +(-25,15) and +(25,-15) .. (B) .. controls (B) and (O) .. (O) -- cycle;
\draw[very thick,violet] (O) -- (B) node [above] {\small $\rho^\perp \cap \gamma^\perp$};

\coordinate [label=right:{\textcolor{red}{\scriptsize $u \in \gamma^\perp$}}] (u) at ($(O)+(Ou)$);
\coordinate (e1) at ($1/3*(end1)-1/3*(u)$);
\coordinate (e2) at ($1/3*(end2)-1/3*(u)$);
\draw[thick,red,-latex] (u) node [below right] {\tiny\textcolor{black}{$\ne 0$}} -- node [midway,above] {\scriptsize $e_2$} ($(u)+(e1)$);
\draw[thick,red,-latex] ($(u)+(e1)$) node {} -- ($(u)+2*(e1)$);
\draw[thick,red,-latex] ($(u)+2*(e1)$) node {} -- (end1) node [left] {\tiny\textcolor{black}{1}};
\draw[thick,teal,-latex] (u) -- node [midway,below] {\scriptsize $e_1$} ($(u)+(e2)$);
\draw[thick,teal,-latex] ($(u)+(e2)$) node [below] {\tiny\textcolor{black}{0}} -- ($(u)+2*(e2)$);
\draw[thick,teal,-latex] ($(u)+2*(e2)$) node [below] {\tiny\textcolor{black}{0}} -- (end2) node [below] {\tiny\textcolor{black}{0}};
%\coordinate (T) at ($(O)+(20,-40)$); 
\node[draw,circle,red,inner sep=1pt,fill] at (u) {};
\node[draw,circle,violet!60,inner sep=1pt,fill] at (end1) {};
\node[draw,circle,blue!60,inner sep=1pt,fill] at (end2) {};
\draw ($(O)+(90,-20)$) node {\scriptsize (d) \; $e_1 \notin \gamma^\perp, e_2 \in \gamma^\perp$};

\end{tikzpicture}
\end{center}
\caption{The dual cone in four cases of Theorem~\ref{idemp_theor}.}
\label{4cases_pict}
\end{figure}

Recall that from $x \in O_\gamma$ it follows that $\chi^u(x) \ne 0$ if and only if $u \in \gamma^\perp$. 

(a) Let $\rho$ be a ray of the face $\gamma$. Then $\gamma^\perp \subseteq \rho^\perp$, so, according to the above, we obtain $\chi^u(x) = \begin{cases}1 \text{ if } u \in \gamma^\perp\\ 0 \text{ otherwise}\end{cases}$, i.e. $x = x_\gamma$. 

Conversely, let us check that $x = x_\gamma$ is an idempotent, i.e. $\chi^u(x_\gamma*x_\gamma) = \chi^u(x_\gamma)$ for any $u \in S_\sigma$. If $u \in \rho^\perp$, then the sum in equation~\eqref{comult_X_binom_eq2} consists of one summand and $\chi^u(x_\gamma*x_\gamma) = \chi^u(x_\gamma)\chi^u(x_\gamma) = \chi^u(x_\gamma)$ since $\chi^u(x_\gamma) = 0,1$. 
If $u \notin \rho^\perp$, then the sum in equation~\eqref{comult_X_binom_eq2} consists of more than one summand, but in each summand $u+ie_2$ or $u+ie_1$ do not belong to $\rho^\perp$, i.e. each summand equals zero. So if $u \notin \rho^\perp$, then $\chi^u(x_\gamma*x_\gamma) = 0 = \chi^u(x_\gamma)$. 

\smallskip

(b) Let $\rho$ be not a ray of the face $\gamma$ and $e_1, e_2 \notin \gamma^\perp$. Since $e_1 \notin \gamma$, there exists a ray $\rho'$ of~$\gamma$ with the primitive vector~$p'$ such that $\langle p', e_1\rangle \ne 0$. Moreover, $\rho' \ne \rho$ since $\rho$ is not a ray of $\gamma$, so $\langle p', e_1\rangle > 0$. 

Take any $u \in \gamma^\perp \setminus \rho^\perp$. Then for any $1 \le j \le \langle p,u\rangle$ we have $\langle p', u+je_1\rangle = j\langle p', e_1\rangle > 0$, so $u + je_1 \notin \gamma^\perp$ and $\chi^{u+je_1}(x) = 0$. In the same way, $\chi^{u+ie_2}(x) = 0$ for $1 \le i \le \langle p,u\rangle$. Since $u \notin \rho^\perp$, it follows that the sum in $\chi^u(x*x)$ has at least two summands and equals zero. However, $\chi^u(x) \ne 0$ since $u \in \gamma^\perp$. Thus there is no idempotents in case (b). 

\smallskip

(c) Let $\rho$ be not a ray of the face $\gamma$ and $e_1, e_2 \in \gamma^\perp$. Take $u \in \gamma^\perp$ such that $\langle p,u\rangle = 1$. Such $u \in S_\sigma$ exists since the rational plane $\gamma^\perp \cap \{\langle p, \,\cdot\,\rangle = 1\}$ contains a lattice element~$-e_1$, so one can find a lattice element in the intersection of this plane and the cone~$\sigma^\vee$ as well. 

Then $u+e_1, u+e_2 \in \gamma^\perp \cap \rho^\perp$, whence $\chi^{u+e_1}(x) = \chi^{u+ie_2}(x) = 1$ and formula~\eqref{comult_X_binom_eq2} gives for an idempotent $x$ the equation $\chi^u(x) = \chi^u(x*x) = \chi^u(x) + \chi^u(x)$. It follows that $\chi^u(x)=0$, which contradicts $u \in \gamma^\perp$. 

\smallskip

(d) Let $\rho$ be not a ray of the face $\gamma$ and $e_1 \notin \gamma^\perp$, $e_2 \in \gamma^\perp$. From $e_1 \notin \gamma^\perp$ it follows that there exists a ray of $\gamma$ with the primitive vector $p'$ such that $\langle p', e_1\rangle > 0$, see case~(b). Consider two cases. 

Let $u \notin \gamma^\perp$. Since $p' \in \gamma$ and $u \in \sigma^\vee \setminus \gamma^\perp$, we have
$$\begin{aligned}
\langle p', u+je_1\rangle &> 0+j\langle p',e_1\rangle \ge 0 \quad\text{ for any } 0 \le j \le \langle p,u\rangle,\\
\langle p', u+ie_2\rangle &> 0+i\langle p',e_2\rangle \ge 0 \quad\text{ for any } 0 \le i \le \langle p,u\rangle,
\end{aligned}$$
so 
$$\begin{aligned}
u+je_1 &\notin \gamma^\perp \quad\text{ for any } 0 \le j \le \langle p,u\rangle,\\
u+ie_2 &\notin \gamma^\perp \quad\text{ for any } 0 \le i \le \langle p,u\rangle. 
\end{aligned}$$ 
Then $\chi^u(x) = 0 = \chi^u(x*x)$ for any $u \notin \gamma^\perp$. 

Let $u \in \gamma^\perp$. In this case, we have 
$$\begin{aligned}
\langle p', u+je_1\rangle &= 0+j\langle p',e_1\rangle > 0 \quad\text{ for any } 1 \le j \le \langle p,u\rangle,\\
\langle p', u+ie_2\rangle &= 0+i\langle p',e_2\rangle = 0 \quad\text{ for any } 0 \le i \le \langle p,u\rangle,
\end{aligned}$$
i.e.
$$\begin{aligned}
u+je_1 &\notin \gamma^\perp \quad\text{ for any } 1 \le j \le \langle p,u\rangle,\\
u+ie_2 &\in \gamma^\perp \quad\text{ for any } 0 \le i \le \langle p,u\rangle.
\end{aligned}$$
Then for $x \in O_\gamma$, the condition $\chi^u(x) = \chi^u(x*x)$ is equivalent to $\chi^u(x) = \chi^u(x)\chi^{u+\langle p,u\rangle e_2}(x)$, i.e. $\chi^{u+\langle p,u\rangle e_2}(x) = 1$. Notice that $u+\langle p,u\rangle e_2 \in \rho^\perp \cap \gamma^\perp = \cone(\gamma, \rho)^\perp$, so the condition $\chi^{u+\langle p,u\rangle e_2}(x) = 1$ is equivalent~\eqref{idemp_rhoperp_eq}. This completes the proof. 
\end{proof}

\section{Geometry of the set $E(X)$}

Let us take a closer look at the geometric structure of the set of idempotents. Denote the set of all idempotents in~$X$ by~$E(X)$. It is closed in~$X$. The set of idempotents $E_\gamma$ in $O_\gamma$ in item~(d) of Theorem~\ref{idemp_theor} is closed in $O_\gamma$ and may be not closed in $X$. We are going to prove that the closure of $E_\gamma$ in $X$ is the union of $E_\gamma$ and one point, which is an idempotent from item~(a) of Theorem~\ref{idemp_theor}. Idempotents are also connected with the action of the group $G_\chi \times G_\chi$ by left and right multiplication, where $G_\chi$ is the group of invertible elements in the monoid. More precisely, the following statement holds. 

\begin{proposition}
\label{idemp_geom_prop}
In the notation of Theorems~\ref{classif_theor} and~\ref{idemp_theor}, the following assertions hold. 
\begin{enumerate}
  \item For any face $\gamma$ of the cone $\sigma$ such that $\rho$ is not a ray of $\gamma$ and exactly one of Demazure roots $e_1, e_2$ belongs to $\gamma^\perp$
  \[\overline{E_\gamma} = E_\gamma \cup \{x_{\cone(\gamma, \rho)}\}.\]
  \item Irreducible components of the subvariety $E(X)$ do not intersect, each of them is either a point or is isomorphic to the affine line. 
  \item Any irreducible component of $E(X)$ is a subset of a $(G_\chi \times G_\chi)$-orbit, and any ${(G_\chi \times G_\chi)}$-orbit contains at most one irreducible component of $E(X)$. 
\end{enumerate}
\end{proposition}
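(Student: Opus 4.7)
For part (a), I start from the description
\[E_\gamma = O_\gamma \cap \{\chi^u = 1 \ \forall\, u \in \cone(\gamma,\rho)^\perp \cap S_\sigma\}\]
of Theorem~\ref{idemp_theor}(d). Writing $\tau = \cone(\gamma,\rho)$, the toric orbit $O_\gamma$ is a torus with character lattice $\gamma^\perp \cap M$, and the conditions $\chi^u = 1$ single out the sublattice $\tau^\perp \cap M$. Since $\dim\tau = \dim\gamma + 1$, the quotient lattice has rank one, so $E_\gamma$ is a coset of a one-dimensional subtorus of $O_\gamma$; in particular $E_\gamma \cong \KK^\times$. To compute its closure in $X_\sigma$, I plan to pass to the affine toric orbit closure $\overline{O_\gamma}$, whose defining cone is the image of $\sigma$ in $N_\QQ/\langle\gamma\rangle_\QQ$. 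The one-parameter subgroup cutting out $E_\gamma$ has direction equal to the image of the primitive vector $p$ of $\rho$ in this quotient, which is precisely the primitive generator of the ray $\tau/\gamma$ of $\sigma/\gamma$. The orbit--cone correspondence then says that this one-parameter subgroup acquires the single distinguished point $x_\tau$ in its closure, proving $\overline{E_\gamma} = E_\gamma \cup \{x_\tau\}$.

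For part (b), combining Theorem~\ref{idemp_theor} with (a), every piece of $E(X)$ is either a singleton $\{x_\gamma\}$ (case (a)) or an affine line $\overline{E_{\gamma'}} = E_{\gamma'} \cup \{x_{\cone(\gamma',\rho)}\}$ (case (d)). The key verification is that distinct closures $\overline{E_{\gamma'_1}}, \overline{E_{\gamma'_2}}$ are disjoint. Their open pieces live in different toric orbits, so the only possible intersection is at coinciding limit points $x_{\cone(\gamma'_i,\rho)}$. By Remark~\ref{def_Dem_cone_rem} the face $\cone(\gamma',\rho)$ has rays equal to the rays of $\gamma'$ together with $\rho$; since $\rho$ is not a ray of $\gamma'$ in case (d), the map $\gamma' \mapsto \cone(\gamma',\rho)$ is injective on case-(d) faces and the limit points are pairwise distinct. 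The isolated components are then exactly those $\{x_\gamma\}$ for which $\gamma$ contains $\rho$ but the face of $\sigma$ with rays equal to the rays of $\gamma$ other than $\rho$ (if it exists) fails to be in case (d); all other case-(a) points are absorbed into some $\overline{E_{\gamma'}}$.

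For part (c), recall that $\TT = \Im\theta$ is realized as $(T \times T)/\diag\Ker\chi$ inside $\Aut(G_\chi)$, so every $\TT$-orbit is contained in a single $(G_\chi\times G_\chi)$-orbit. Hence $E_{\gamma'} \subseteq O_{\gamma'}$ is contained in some orbit $\Of$, and the remaining task is to show $x_\tau \in \Of$. For this I plan to apply the $\GG_a$-action on $X$ by left or right multiplication whose associated LND has degree $e_k \notin \gamma'^\perp$: expanding $\exp(\alpha\delta)(\chi^u)$ and evaluating at the distinguished point $x_\tau$, the only surviving contribution comes from shifting $u$ by the multiple of $e_k$ that annihilates $\rho$, producing a point of the larger toric orbit $O_{\gamma'}$, which can then be $\TT$-translated to lie in $E_{\gamma'}$. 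Thus $\overline{E_{\gamma'}} \subseteq \Of$. For the uniqueness statement, I argue that distinct irreducible components of $E(X)$ are labelled by distinct toric data (namely the pair $\{\gamma', \cone(\gamma',\rho)\}$ in case (d), or the singleton $\{\gamma\}$ in case (a), which are all distinguished by (b)), and that a $(G_\chi\times G_\chi)$-orbit is determined by the collection of toric orbits it meets via the two root $\GG_a$-actions; this forces each orbit to contain at most one component. The main obstacle I expect is the explicit $\GG_a$-calculation in (c): one must verify that acting by the degree-$e_k$ LND at $x_\tau$ really lands in $O_{\gamma'}$ and not in a deeper boundary stratum, which requires a careful tracking of which characters are annihilated by which faces.
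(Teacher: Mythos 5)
Your parts (a) and (b) are essentially correct and take a genuinely more elementary route than the paper. The paper proves (a) via the machinery of \cite{AKZ2012}: it characterizes the $H_e$-connected pairs of $\TT$-orbits (namely $(O_\gamma,O_{\gamma'})$ is $H_e$-connected iff $\rho$ is not a ray of $\gamma$, $\gamma'=\cone(\gamma,\rho)$ and $e\in\gamma^\perp$), observes that $E_\gamma$ is a union of $R_p$-orbits, and deduces that it is a single $R_p$-orbit whose closure is one closed $H_{e_1}$-orbit. You instead identify $E_\gamma$ directly as the one-dimensional subtorus of $O_\gamma$ on which the saturated corank-one sublattice $\cone(\gamma,\rho)^\perp\cap\gamma^\perp\cap M$ is trivial, i.e.\ the $R_p$-orbit of $x_\gamma$, and compute its closure by the orbit--cone correspondence; this buys you $E_\gamma\cong\KK^\times$ without any reference to \cite{AKZ2012}. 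The one claim you assert without proof is that the closure adds a \emph{single} point: you must rule out a limit at $t\to\infty$, i.e.\ check that $-\bar p$ lies in no cone of the star fan of $\gamma$ (pair with $u\in\sigma^\vee$ cutting out $\gamma$, using $\langle p,u\rangle>0$). The paper gets ``exactly one added point'' for free from the closedness of orbits of the unipotent group $H_{e_1}$ on the affine variety~$X$. Your injectivity argument for $\gamma'\mapsto\cone(\gamma',\rho)$ in (b) is fine.

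Part (c), however, contains a concrete error: you propose to move $x_\tau$, $\tau=\cone(\gamma',\rho)$, into $O_{\gamma'}$ using the root subgroup whose degree satisfies $e_k\notin\gamma'^\perp$. That is the wrong root. Writing $\exp(\alpha\delta_{e_k})(\chi^u)=\sum_j\binom{\langle p,u\rangle}{j}\alpha^j\chi^{u+je_k}$ and evaluating at $x_\tau$, the only index with $u+je_k\in\rho^\perp$ is $j=\langle p,u\rangle$, and for $e_k\notin\gamma'^\perp$ and $u\in\gamma'^\perp\setminus\rho^\perp$ one has $u+\langle p,u\rangle e_k\notin\gamma'^\perp$, so every term vanishes: this root subgroup in fact \emph{fixes} $x_\tau$ and your computation cannot land in $O_{\gamma'}$. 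The root that does the job is the one lying \emph{in} $\gamma'^\perp$ --- exactly the condition for $(O_{\gamma'},O_\tau)$ to be $H_e$-connected in the paper's formulation. The fix is a one-line swap, but as written the key step of (c) fails. Separately, the uniqueness half of (c) is too vague: ``a $(G_\chi\times G_\chi)$-orbit is determined by the collection of toric orbits it meets'' must be replaced by the actual determination of these orbits as $O_\gamma$ or $O_\gamma\cup O_{\cone(\gamma,\rho)}$ (which requires checking that the root not in $\gamma^\perp$ acts trivially on both strata), after which the case analysis of Theorem~\ref{idemp_theor} gives at most one component per orbit.
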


Before a proof we recall some results of~\cite{AKZ2012}. Let $X = X_\sigma$ be an affine toric variety with an acting torus~$\TT$, $p$ be the primitive vector on a ray of~$\sigma$, and $e$ be a Demazure root corresponding to~$p$. Denote by $H_e$ the one-parameter subgroup in $\Aut(X)$ normalized by the torus~$\TT$ and corresponding to~$e$, i.e. to a homogeneous locally nilpotent derivation with degree~$e$, see subsection~\ref{LND_Dem_subsec}. 

In \cite[Proposition~2.1]{AKZ2012} it is proved that any orbit of the group $H_e$ in $X$ is either a point or intersects exactly two $\TT$-orbits $O_1$, $O_2$, where $\dim O_1 = \dim O_2 + 1$ and $O_2 \subseteq \overline{O_1}$. Moreover, in the latter case
\begin{equation}
\label{Heorbit_eq}
\begin{aligned}
O_1 \cap H_ex = R_px,\\
O_2 \cap H_ex = \{pt\},
\end{aligned}
\end{equation}
where $x$ is any point in $O_1$ and $R_p \cong \KK^\times$ is the one-parameter subgroup in $\Aut(X)$ given by the primitive vector $p \in N$. Such a pair of $\TT$-orbits $(O_1, O_2)$ is called \emph{$H_e$-connected}. 

Recall that there is a one-to-one correspondence between faces $\tau$ of the cone $\sigma$ and $\TT$-orbits $O_\tau$ on $X$, see subsection~\ref{Torb_subsec}. In~\cite[Lemma 2.2]{AKZ2012}, a combinatorial condition on faces of $\sigma$ corresponding to $H_e$-connected orbits is given. Namely, a pair of $\TT$-orbits $(O_{\tau_1}, O_{\tau_2})$ is $H_e$-connected if and only if $\langle \tau_2, e\rangle \le 0$, the cone $\tau_1$ is a facet of $\tau_2$, and $\tau_1 = \tau_2 \cap e^\perp$. 

\begin{proof}[Proof of Proposition~\ref{idemp_geom_prop}]
Let $\gamma$ be a face of~$\sigma$ and $e$ be a Demazure root corresponding to the ray $\rho$ of $\sigma$. First we are going to note that 
\begin{equation}
\label{idempHe_lemeq}
\text{for a face $\gamma'$ the pair $(O_\gamma, O_{\gamma'})$ is $H_e$-connected} \quad \Leftrightarrow \quad 
\left\{\begin{aligned}
&\text{$\rho$ is not a ray of~$\gamma$},\\
&\gamma' = \cone(\gamma, \rho),\\
&e \in \gamma^\perp.
\end{aligned}\right.
\end{equation}

Indeed, if the pair $(O_\gamma, O_{\gamma'})$ is $H_e$-connected, then $\gamma$ is a facet of $\gamma'$ by~\cite[Lemma 2.2]{AKZ2012}, so $\gamma' = \cone(\gamma, \rho_1, \ldots, \rho_k)$ for some rays $\rho_i$ of~$\sigma$, where $\rho_i \nsubseteq \gamma$. By~\cite[Lemma 2.2]{AKZ2012}, $\gamma = \gamma' \cap e^\perp$ and $\langle \gamma', e\rangle \le 0$, so $\langle \rho_i,e\rangle < 0$. Then according to the definition of a Demazure root $\gamma' = \cone(\gamma, \rho)$. Also, $\rho$ is not a ray of $\gamma$ since $\gamma$ is a facet of $\gamma'$, and $\gamma = \gamma' \cap e^\perp$ implies $e \in \gamma^\perp$. 

Conversely, let the right-hand side of~\eqref{idempHe_lemeq} hold. Then it follows from the definition of a Demazure root that $\gamma'$ is a face of~$\sigma$, see Remark~\ref{def_Dem_cone_rem}. Since $\langle\gamma,e\rangle = 0$ and $\langle\rho,e\rangle \le 0$, it is also easy to see that conditions of~\cite[Lemma 2.2]{AKZ2012} hold, so the pair $(O_\gamma, O_{\gamma'})$ is $H_e$-connected. 

Let $\gamma$ be a face from case~(d) of Theorem~\ref{idemp_theor}. Without loss of generality, one can assume that $e_1 \in \gamma^\perp$ and $e_2 \notin \gamma^\perp$. Consider the set of idempotents \[E_\gamma = O_\gamma \cap \,\{\chi^u = 1 \;\; \forall u \in \cone(\gamma, \rho)^\perp \cap S_\sigma\}.\] It is invariant with respect to~$R_p$ since the action of~$R_p$ on~$\KK[X]$ is defined by the formula $t \cdot \chi^u = t^{\langle p,u\rangle}\chi^u$ and $\langle p,u\rangle = 0$ for any $u \in \cone(\gamma, \rho)^\perp$. So $E_\gamma$ is a union of some $R_p$-orbits. 

According to~\eqref{idempHe_lemeq}, the pair $(O_\gamma, O_{\gamma'})$ is $H_{e_1}$-connected. Since $H_{e_1}$ is unipotent and $X$ is affine, any $H_{e_1}$-orbit is closed in $X$, see~\cite[Section~1.3]{PV1994}. So taking the closure of any $R_p$-orbit in $O_\gamma$ adds one point, which belongs to $O_{\gamma'}$, and gives an $H_{e_1}$-orbit by~\cite[Proposition~2.1]{AKZ2012}, see equation~\eqref{Heorbit_eq}. We apply this to $R_p$-orbits in $E_\gamma$. The set of idempotents in $X$ is closed, so the added points in $O_{\gamma'}$ are idempotents in $X$ as well. However, $\rho$ is a ray of the face $\gamma'=\cone(\gamma, \rho)$, so according to case (a) of Theorem~\ref{idemp_theor} there is exactly one idempotent $x_{\gamma'}$ in $O_{\gamma'}$. It follows that $E_\gamma$ consists of one $R_p$-orbit and gives the first statement of Proposition~\ref{idemp_geom_prop}. 

The group of invertible elements in $X$ equals $G_\chi = \GG_a \leftthreetimes \TT$, where left and right multiplications by~$\GG_a$ on~$X$ correspond to $e_1$ and $e_2$. So the orbits of the natural action of the group $G_\chi \times G_\chi$ on~$X$ are the unions of $H_{e_1}$- and $H_{e_2}$-connected $\TT$-orbits. So if a face $\gamma$ corresponds to case (b) of Theorem~\ref{idemp_theor}, then $O_\gamma$ is a $(G_\chi\times G_\chi)$-orbit and there is no idempotents in $O_\gamma$. If a face $\gamma$ corresponds to case (c) of Theorem~\ref{idemp_theor}, then $O_\gamma \cup O_{\cone(\gamma, \rho)}$ is a $(G_\chi\times G_\chi)$-orbit and there is one idempotent $x_{\cone(\gamma, \rho)}$ in $O_\gamma$. For a face $\gamma$ in case (d), $O_\gamma \cup O_{\cone(\gamma, \rho)}$ is a $(G_\chi\times G_\chi)$-orbit and the set of idempotents in it equals $\overline{E_{\gamma}}$, which is isomorphic to $H_e \cong \AA^1$. If $\gamma'$ is a face from case~(a) such that there is no $\gamma$ from cases (b) or (d) with $\gamma'=\cone(\gamma, \rho)$, then $O_\gamma$ is a $(G_\chi\times G_\chi)$-orbit with one idempotent $x_{\gamma'}$ in it. This completes the proof of Proposition~\ref{idemp_geom_prop}. 
\end{proof}

Recall that an element $\zero$ in a monoid $X$ is called the \emph{zero} if $\zero*x = x*\zero = \zero$ for any $x \in X$. Clearly, the zero element is unique if it exists. Note that $\zero \in E(X)$ and $\{\zero\}$ is one of $(G_\chi\times G_\chi)$-orbits, so $\zero$ is an isolated point in $E(X)$ by Proposition~\ref{idemp_geom_prop}(c) if it exists. 

\begin{proposition}
\label{zero_prop}
In the notation of Theorem~\ref{classif_theor}, the monoid $X$ has zero if and only if $\sigma^\perp = 0$ and $-e_1, -e_2 \notin \sigma^\vee$. In such a case, the zero element equals~$x_\sigma$. 
\end{proposition}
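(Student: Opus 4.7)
The plan is to prove both directions through a single common core: a zero element, if it exists, is forced to be a torus-fixed point of~$X_\sigma$, and that immediately pins down $\sigma^\perp = 0$ and $\zero = x_\sigma$; the actual absorption conditions for $x_\sigma$ then reduce by a routine expansion of~\eqref{comult_X_binom_eq} to a question of which lattice points in $S_\sigma$ lie on the rays spanned by $-e_1$ and $-e_2$.

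First, I would observe that if $\zero \in X$ exists, then $\zero*g = \zero = g*\zero$ for every $g \in G_\chi$, so $\zero$ is fixed by the action of $G_\chi \times G_\chi$ by left and right multiplication, and in particular by the torus $\TT = \Im\theta$ constructed in Proposition~\ref{toric_theor}. By the orbit description recalled in subsection~\ref{Torb_subsec}, a $\TT$-fixed point on $X_\sigma$ exists iff $\sigma$ is of full dimension, i.e.\ $\sigma^\perp = 0$, and in that case the unique fixed point is~$x_\sigma$. This forces $\sigma^\perp = 0$ and $\zero = x_\sigma$.

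Next, assuming $\sigma^\perp = 0$, I would check precisely when $x_\sigma$ is actually the zero. Since $\chi^u(x_\sigma) = 0$ for every $u \in S_\sigma \setminus \{0\}$, the identity $\chi^u(x*x_\sigma) = \chi^u(x_\sigma)$, expanded via~\eqref{comult_X_binom_eq}, must reduce to the vanishing of every term $\binom{\langle p,u\rangle}{i}\chi^{u+ie_2}(x)\chi^{u+je_1}(x_\sigma)$ for all $x \in X_\sigma$. The factor $\chi^{u+je_1}(x_\sigma)$ is nonzero exactly when $u+je_1 = 0$, i.e.\ $u = -je_1$ with $j \ge 1$, and this can happen for some $u \in S_\sigma$ iff $-e_1 \in \sigma^\vee$. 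Thus $x*x_\sigma = x_\sigma$ for every $x$ iff $-e_1 \notin \sigma^\vee$, and the mirror argument applied to $x_\sigma*x$ (where the relevant factor is now $\chi^{u+ie_2}(x_\sigma)$) gives $-e_2 \notin \sigma^\vee$.

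The only step requiring care is the ``bad'' direction: if, say, $-e_1 \in \sigma^\vee$, I need to produce an explicit $u$ and $x$ for which the identity fails. I would take $u = -e_1 \in S_\sigma$, note $\langle p,u\rangle = 1$, and compute directly from~\eqref{comult_X_binom_eq} that only the $(i,j) = (0,1)$ summand survives, giving $\chi^u(x*x_\sigma) = \chi^{-e_1}(x)$. This is a nonzero regular function on $X_\sigma$, whereas $\chi^u(x_\sigma) = 0$, so $x*x_\sigma \ne x_\sigma$ for a generic $x$ and $x_\sigma$ is not a zero. Symmetrically for $-e_2$. Beyond this, the argument is bookkeeping with the sign conditions built into the definition of a Demazure root, and no genuine obstacle is anticipated.
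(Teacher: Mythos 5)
Your proof is correct, and its first half takes a genuinely different route from the paper's. The paper argues entirely through the comultiplication: it expands $\chi^u(x*\zero)-\chi^u(\zero)$ via~\eqref{comult_X_binom_eq}, separates the $j=0$ term, and invokes linear independence of the functions $\chi^{u+\langle p,u\rangle e_2}-1$ and $\chi^{u+ie_2}$, $0\le i<\langle p,u\rangle$, to conclude that $\chi^{u+je_1}(\zero)=0$ for all $0\le j\le\langle p,u\rangle$; the case $j=0$ then yields $\zero=x_\sigma$ and $\sigma^\perp=0$, and the cases $j\ge 1$ yield the root conditions, all in one computation. You instead first observe that a zero is fixed under left and right multiplication by $G_\chi$, hence is a fixed point of the torus $\TT$ of Proposition~\ref{toric_theor}, so the orbit--face correspondence of subsection~\ref{Torb_subsec} forces $\sigma$ to be full-dimensional and $\zero=x_\sigma$; only then do you run the character computation, which becomes simpler because each factor $\chi^{u+je_1}(x_\sigma)$ is explicitly $0$ or $1$ rather than an unknown to be solved for. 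Your geometric shortcut gives a cleaner, more conceptual identification of the candidate zero (and would adapt to situations where the absorbing point is not easy to guess), at the price of leaning on the toric orbit structure; the paper's single expansion is more self-contained but must carry the linear-independence argument. The remaining bookkeeping --- that $u+je_1=0$ is solvable with $u\in S_\sigma\setminus\{0\}$ and $1\le j\le\langle p,u\rangle$ if and only if $-e_1\in\sigma^\vee$, together with the explicit counterexample $u=-e_1$ in the bad case and the mirror statement for $e_2$ --- coincides with the paper's and is handled correctly.
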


\begin{proof}
An element $\zero$ is a zero element if and only if $\chi^u(x*\zero) = \chi^u(\zero*x) = \chi^u(\zero)$ for any $u \in S_\sigma$ and $x \in X$. It holds for $u=0$ since $\chi^0 = 1$; let $u \ne 0$. Consider the sum
\[\chi^u(x*\zero)-\chi^u(\zero) = \left(\chi^{u+\langle p,u\rangle e_2}(x)-1\right)\chi^u(\zero) + \sum\limits_{\substack{i+j=\langle p, u\rangle\\j \ne 0}} \begin{pmatrix}\langle p, u\rangle\\i\end{pmatrix} \chi^{u+ie_2}(x) \chi^{u+je_1}(\zero).
\]
It equals zero for any $x\in X$ if and only if 
\begin{equation}
\label{zero_eq1}
\chi^{u+je_1}(\zero) = 0 \quad\text{ for any }\; 0 \le j \le \langle p,u\rangle.
\end{equation}
Indeed, for $u \ne 0$ the functions $\chi^{u+\langle p,u\rangle e_2}-1, \, \chi^{u+ie_2}$, $0 \le i < \langle p,u\rangle$, are linearly independent in $\KK[X] = \bigoplus_{v \in S_\sigma} \KK\chi^v$. In particular, it follows that
\[\chi^u(\zero) = \begin{cases}0 \; \text{ if } u \ne 0,\\ 1 \; \text{ if } u = 0,\end{cases}\]
i.e. $\zero = x_\sigma$ and $\sigma^\perp = 0$, see subsection~\ref{Torb_subsec}. Conversely, the point $\zero = x_\sigma$ satisfies equations~\eqref{zero_eq1} if and only if $u + ie_1 \ne 0$ for any $0 \le i \le \langle p,u\rangle$ and any $u \in S_\sigma \setminus \{0\}$, which is equivalent to $-e_1 \notin \sigma^\vee$. Similarly, $\chi^u(\zero*x)-\chi^u(\zero) = 0$ if and only if $\zero = x_\sigma$ and $-e_2 \notin \sigma^\vee$. 
\end{proof}

\begin{corollary}
\label{zeroAn_cor}
In the notation of Theorem~\ref{affspace_theor}, the monoid on $\AA^n$ admits no zero element if and only if $a_1 = \ldots = a_{n-1} = 0$ or $b_1 = \ldots = b_{n-1} = 0$. Otherwise, the zero element is $(0, \ldots, 0) \in \AA^n$.
\end{corollary}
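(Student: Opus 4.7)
The plan is to deduce Corollary~\ref{zeroAn_cor} directly from Proposition~\ref{zero_prop} by specializing to the setting of Theorem~\ref{affspace_theor}. Recall from the proof of that theorem that the cone $\sigma$ of $\AA^n$ is the positive octant in $N_\QQ$ with dual $\sigma^\vee$ equal to the positive octant in $M_\QQ$, the distinguished ray is generated by $p = (0,\ldots,0,1)$, and the two Demazure roots chosen are
\[e_1 = (b_1, \ldots, b_{n-1}, -1), \qquad e_2 = (a_1, \ldots, a_{n-1}, -1),\]
with $a_i, b_i \in \ZZ_{\ge 0}$.

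Next I would check the two conditions of Proposition~\ref{zero_prop} separately. Since $\sigma$ is full-dimensional (its rays span $N_\QQ$), we have $\sigma^\perp = 0$ automatically, so the first condition is always satisfied. The second condition reduces to a direct coordinate check: $-e_2 = (-a_1, \ldots, -a_{n-1}, 1)$ lies in the positive octant $\sigma^\vee$ if and only if every $a_i \le 0$, which (since $a_i \ge 0$) means $a_1 = \cdots = a_{n-1} = 0$; analogously $-e_1 \in \sigma^\vee$ iff $b_1 = \cdots = b_{n-1} = 0$. Thus the condition ``$-e_1, -e_2 \notin \sigma^\vee$'' of Proposition~\ref{zero_prop} fails precisely when all $a_i$ vanish or all $b_i$ vanish, giving the stated criterion.

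Finally, when a zero exists, Proposition~\ref{zero_prop} identifies it with the distinguished point $x_\sigma$. Under the identification $\KK[\AA^n] = \KK[\chi^{u_1}, \ldots, \chi^{u_n}]$ where $\chi^{u_i}$ is the $i$-th coordinate function, the point $x_\sigma$ is defined by $\chi^{u_i}(x_\sigma) = 0$ for all $i$ (since $u_i \notin \sigma^\perp = 0$), i.e.\ $x_\sigma = (0, \ldots, 0) \in \AA^n$. This completes the argument. There is no genuine obstacle here; the only thing to be careful about is correctly reading off the sign conventions for Demazure roots to convert ``$-e_k \in \sigma^\vee$'' into a statement about the exponents $a_i, b_i$.
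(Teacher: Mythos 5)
Your proposal is correct and follows the same route as the paper: specialize Proposition~\ref{zero_prop} using the data from the proof of Theorem~\ref{affspace_theor} ($\sigma$, $\sigma^\vee$ the positive octants, $e_1=(b_1,\ldots,b_{n-1},-1)$, $e_2=(a_1,\ldots,a_{n-1},-1)$), note $\sigma^\perp=0$ is automatic, and read off that $-e_k\in\sigma^\vee$ exactly when the corresponding exponents all vanish. Your write-up is in fact slightly more detailed than the paper's, which leaves these coordinate checks implicit.
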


For example, there is no zero element in the monoid from Example~\ref{idempA2_example}, while the point $(0,0,0,0)\in\AA^4$ is the zero element in Example~\ref{idempA4_example}. 

\begin{proof}
According to the proof of Theorem~\ref{affspace_theor}, two Demazure roots are $e_1 = (b_1, \ldots, b_{n-1}, -1)$, $e_2 = (a_1, \ldots, a_{n-1}, -1)$, $a_1, \ldots, a_{n-1}, b_1, \ldots, b_{n-1} \in \ZZ_{\ge0}$, and $\sigma$ and $\sigma^\vee$ are the positive octants in $N_\QQ$ and $M_\QQ$, respectively. Then Proposition~\ref{zero_prop} implies the required condition. 
\end{proof}

\section{The center}

Consider the \emph{center} of a monoid $X$:
\[Z(X) = \{x \in X \mid x*y = y*x \;\;\forall y \in X\}.\]
We find equations that define $Z(X)$ and study the dimensions of its irreducible components. 

\begin{proposition}
\label{centerX_prop}
In the notation of Theorem~\ref{classif_theor}, we have 
\[Z(X) = \begin{cases}
  \overline{O_\rho} \cap \{\chi^{u+e_1} = \chi^{u+e_2} \;\;\;\forall u \in S_\sigma: \langle p,u\rangle = 1\} \quad \text{if}\; e_1 \ne e_2;\\
  X \quad \text{if}\; e_1 = e_2.
  \end{cases}\]
\end{proposition}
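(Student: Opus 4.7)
The plan is to analyze the difference $\chi^u(x*y) - \chi^u(y*x)$ as a polynomial in $y$ using the binomial expansion~\eqref{comult_X_binom_eq2} and to read off conditions on $x$ by requiring this polynomial to vanish identically for every $u \in S_\sigma$. The case $e_1 = e_2$ is immediate: the right-hand side of~\eqref{comult_X_eq} is symmetric under swapping the tensor factors, so $X$ is commutative and $Z(X) = X$.

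Assume $e_1 \ne e_2$. Since $\langle p, e_1\rangle = \langle p, e_2\rangle = -1$, any relation $ke_1 = k'e_2$ with nonnegative integers $k, k'$ not both zero would force $e_1 = e_2$; hence, for each $u$ with $n = \langle p, u\rangle \ge 1$, the $2n+1$ lattice elements $u$, $u + ke_1$, $u + ke_2$ ($1 \le k \le n$) are pairwise distinct. Subtracting~\eqref{comult_X_binom_eq2} applied to $(x,y)$ and $(y,x)$ then produces a $\KK$-linear combination of $2n+1$ distinct characters in $y$, whose coefficients depend only on $x$. Since characters form a $\KK$-basis of $\KK[X_\sigma]$, vanishing for every $y$ is equivalent to vanishing of each coefficient, i.e.\ to the equations $\chi^{u + je_k}(x) = 0$ for $k = 1,2$ and $0 \le j \le n-1$, together with $\chi^{u + ne_1}(x) = \chi^{u + ne_2}(x)$. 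Letting $u$ range, the first block of equations collapses to $\chi^v(x) = 0$ for every $v \in S_\sigma$ with $\langle p, v\rangle \ge 1$, which by subsection~\ref{Torb_subsec} is exactly the condition $x \in \overline{O_\rho}$.

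The main obstacle is that the proposition keeps only the $n = 1$ relations, while the analysis above imposes $\chi^{u+ne_1}(x) = \chi^{u+ne_2}(x)$ for every $n \ge 1$ and every $u \in S_\sigma$ with $\langle p, u\rangle = n$. I will bridge this gap by proving, by strong induction on $n$, that
\[
\chi^{u + ne_1} - \chi^{u + ne_2} \in J := \bigl(\chi^{w+e_1} - \chi^{w+e_2} \mid w \in S_\sigma,\ \langle p, w\rangle = 1\bigr) \subseteq \KK[\overline{O_\rho}],
\]
based on the telescoping identity
\begin{multline*}
\chi^{u+ne_1} - \chi^{u+ne_2} = \bigl(\chi^{(u+e_1)+(n-1)e_1} - \chi^{(u+e_1)+(n-1)e_2}\bigr) \\
+ \bigl(\chi^{(u+e_2)+(n-1)e_1} - \chi^{(u+e_2)+(n-1)e_2}\bigr) \\
- \bigl(\chi^{(u+e_1+e_2)+(n-2)e_1} - \chi^{(u+e_1+e_2)+(n-2)e_2}\bigr);
\end{multline*}
the three shifted base points $u + e_1$, $u + e_2$, $u + e_1 + e_2$ all lie in $S_\sigma$ by the Demazure-root defining inequalities and have strictly smaller $p$-pairings $n - 1$, $n - 1$, $n - 2$ (the last parenthesized expression being $0$ when $n = 2$). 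Combining everything shows that, for $e_1 \ne e_2$, centrality of $x$ is equivalent to $x \in \overline{O_\rho}$ together with $\chi^{u+e_1}(x) = \chi^{u+e_2}(x)$ for all $u \in S_\sigma$ with $\langle p, u\rangle = 1$, proving the proposition.
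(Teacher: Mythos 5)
Your proof is correct, but it reaches the proposition by a genuinely different route than the paper. The paper replaces the condition $x*y=y*x$ for all $y\in X$ by $x=y*x*y^{-1}$ for all $y$ in the dense subgroup $G_\chi$, derives an explicit formula for $\chi^u(y^{-1})$, expands the triple product, and extracts from the $y$-coefficients the conditions $x\in\overline{O_\rho}$ together with the divided-difference equations $\bigl(\chi^u(\chi^{e_1}-\chi^{e_2})^m\bigr)(x)=0$ for $1\le m\le\langle p,u\rangle$; these are reduced to $m=1$ by Pascal's rule and finally pruned to $\langle p,u\rangle=1$ using $x\in\overline{O_\rho}$. You instead expand $\chi^u(x*y)-\chi^u(y*x)$ directly for all $y\in X$, which avoids the inverse formula and the density argument entirely and yields a cleaner intermediate system: $x\in\overline{O_\rho}$ plus only the endpoint relations $\chi^{u+ne_1}(x)=\chi^{u+ne_2}(x)$, with no mixed terms $\chi^{u+ie_2+le_1}$. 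The price is that for $n\ge 2$ these relations do not vanish automatically on $\overline{O_\rho}$ (their exponents lie in $\rho^\perp$), so you need your three-term telescoping identity; I checked that it is an exact identity (the two middle characters cancel in pairs), that the shifted base points $u+e_1$, $u+e_2$, $u+e_1+e_2$ do lie in $S_\sigma$ by the Demazure-root inequalities, and that the third bracket is zero when $n=2$, so the strong induction closes. In fact your identity exhibits $\chi^{u+ne_1}-\chi^{u+ne_2}$ as a $\ZZ$-linear combination of the generators $\chi^{w+e_1}-\chi^{w+e_2}$ with $\langle p,w\rangle=1$, so the ideal structure of $J$ is not even needed. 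Both arguments rest on the same two pillars — linear independence of distinct characters in the $y$-variable (your distinctness check via pairing with $p$ is the right one) and the fact that adding a Demazure root keeps exponents in $S_\sigma$ — but your version is somewhat more elementary, whereas the paper's conjugation computation produces the inverse formula~\eqref{chiu_inverse_eq} as a byproduct of independent interest.
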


\begin{remark}
\label{centereq_rem}
One can leave only equations $\chi^{u_i+e_1} = \chi^{u_i+e_2}$, where 
\[\{u \in S_\sigma: \langle p,u\rangle = 1\} \subseteq \bigcup_i (u_i + S_\sigma).\]
Indeed, for any $v \in S_\sigma$ the equation $\chi^{u+e_1}=\chi^{u+e_2}$ implies $\chi^{v+u+e_1}=\chi^{v+u+e_2}$. 
\end{remark}

\begin{proof}
If $e_1 = e_2$, then $X$ is commutative; from now on $e_1 \ne e_2$. Since the set of invertible elements $G_\chi$ is dense in~$X$, 
\[Z(X) = \{x \in X \mid x*y = y*x \;\;\forall y \in G_\chi\} = \{x \in X \mid x = y*x*y^{-1} \;\;\forall y \in G_\chi\}.\]

First let us check that for $y \in G_\chi$ the inverse element is given by
\begin{equation}
\label{chiu_inverse_eq}
\chi^u(y^{-1}) = (-1)^{\langle p,u\rangle}\chi^{-u-\langle p,u\rangle(e_1+e_2)}(y), \quad u \in S_\sigma.
\end{equation}
It us easy to see that the map that takes $u$ to the right-hand side of~\eqref{chiu_inverse_eq} is a homomorphism of semigroups $S_\sigma \to \KK^\times$, so formula~\eqref{chiu_inverse_eq} defines a point $z$ in $X$. Let us multiply $y$ by $z$. According to equation~\eqref{comult_X_binom_eq2},
\[\chi^u(y*z) = 
\sum\limits_{i+j=\langle p, u\rangle} \begin{pmatrix}\langle p, u\rangle\\i\end{pmatrix} \chi^{u+ie_2}(y)\chi^{u+je_1}(z).\]
For $u' = u+je_1$ we have $\langle p, u'\rangle = \langle p,u\rangle - j = i$ and $-u'-\langle p,u'\rangle(e_1+e_2) = -u-ie_2-\langle p,u\rangle e_1$, so
\begin{multline*}
\chi^u(y*z) = 
\sum\limits_{i+j=\langle p, u\rangle} \begin{pmatrix}\langle p, u\rangle\\i\end{pmatrix} (-1)^i\chi^{u+ie_2}(y)\chi^{-u-ie_2-\langle p,u\rangle e_1}(y) = 
\\
= \chi^{-\langle p,u\rangle e_1}(y) \sum\limits_{i+j=\langle p, u\rangle} \begin{pmatrix}\langle p, u\rangle\\i\end{pmatrix} (-1)^i = 
\begin{cases}
1, \;\; \text{if} \;\; \langle p,u\rangle = 0\\
0, \;\; \text{if} \;\; \langle p,u\rangle \ne 0
\end{cases}
= \chi^u(x_\rho),
\end{multline*}
where $x_\rho$ is a unity in $X$, see subsections~\ref{Torb_subsec} and~\ref{invproof_subsect}. Thus $y * z = x_\rho$, in the same way $z * y = x_\rho$, whence $z = y^{-1}$. 

Now we can calculate $\chi^u(y*x*y^{-1})$ for $u \in S_\sigma$: 
\begin{multline*}
\chi^u(y*x*y^{-1}) = 
\sum\limits_{i+j=\langle p, u\rangle} \begin{pmatrix}\langle p, u\rangle\\i\end{pmatrix}\chi^{u+ie_2}(y*x)\chi^{u+je_1}(y^{-1}) = 
\\
= \sum\limits_{i+j=\langle p, u\rangle} \begin{pmatrix}\langle p, u\rangle\\i\end{pmatrix} (-1)^i\chi^{u+ie_2}(y*x)\chi^{-u-ie_2-\langle p,u\rangle e_1}(y) = 
\\
= \sum\limits_{i+k+l=\langle p, u\rangle} \begin{pmatrix}\langle p, u\rangle\\i\end{pmatrix}\begin{pmatrix}\langle p, u\rangle - i\\k\end{pmatrix}
(-1)^i\chi^{u+ie_2+ke_2}(y)\chi^{u+ie_2+le_1}(x)\chi^{-u-ie_2-\langle p,u\rangle e_1}(y) = 
\\
= \sum\limits_{i+k+l=\langle p, u\rangle} 
\frac{\langle p,u\rangle!}{i!k!l!} (-1)^i\chi^{u+ie_2+le_1}(x)\chi^{ke_2-\langle p,u\rangle e_1}(y) = 
\\
= \sum\limits_{k=0}^{\langle p, u\rangle} \begin{pmatrix}\langle p, u\rangle\\k\end{pmatrix}
\left(\sum\limits_{i+l=\langle p, u\rangle - k} \begin{pmatrix}\langle p, u\rangle - k\\i\end{pmatrix}(-1)^i\chi^{u+ie_2+le_1}(x)\right)\chi^{ke_2-\langle p,u\rangle e_1}(y) = 
\\
= \sum\limits_{k=0}^{\langle p, u\rangle} \begin{pmatrix}\langle p, u\rangle\\k\end{pmatrix}
\left(\chi^u(\chi^{e_1} - \chi^{e_2})^{\langle p, u\rangle - k}\right)(x) \cdot \chi^{ke_2-\langle p,u\rangle e_1}(y).
\end{multline*}
Note that the summand for $k = \langle p,u\rangle$ equals $\chi^u(x)\chi^{\langle p,u\rangle(e_2-e_1)}(y)$. A point $x$ belongs to $Z(X)$ if and only if $\chi^u(y*x*y^{-1}) - \chi^u(x) = 0$ for any $y \in G_\chi$, i.e.
\begin{multline*}
\chi^u(x)\left(\chi^{\langle p,u\rangle(e_2-e_1)}(y) - 1\right) +
\sum\limits_{k=0}^{\langle p, u\rangle - 1} \begin{pmatrix}\langle p, u\rangle\\k\end{pmatrix}
\left(\chi^u(\chi^{e_1} - \chi^{e_2})^{\langle p, u\rangle - k}\right)(x) \cdot \chi^{ke_2-\langle p,u\rangle e_1}(y) = 0
\end{multline*}
for any $y \in G_\chi$. By Theorem~\ref{classif_theor}, the set of invertible elements in~$X$ equals the open subset $X_\rho = G_\chi$. 
If $\langle p,u\rangle \ne 0$, then the functions $\chi^{\langle p,u\rangle(e_2-e_1)}(y) - 1, \chi^{ke_2-\langle p,u\rangle e_1}(y)$, where $0 \le k \le \langle p,u\rangle - 1$, are linearly independent in $\KK[X_\rho]= \bigoplus_{v \in S_\rho} \KK\chi^v$. If $\langle p,u\rangle \ne 0$, then the first function is zero and other are linearly independent. 
So the equations on a point~$x$ in $Z(X)$ are as follows:
\begin{equation}
\label{center_eq}
\left\{\begin{aligned}
\chi^u(x) &= 0 \quad \text{if} \; \langle p,u\rangle \ne 0; \\
\chi^u(\chi^{e_1} - \chi^{e_2})^m &= 0 \quad \text{for} \; 1 \le m \le \langle p,u\rangle.
\end{aligned}
\right.
\end{equation}

The first line of~\eqref{center_eq} is equivalent to $x \in \overline{O_\rho}$, see subsection~\ref{Torb_subsec}. Let us show that the equation for $m=1$ implies all other in the second line of~\eqref{center_eq}. We use induction on~$m$; let $m \ge 2$. For shortness we assume $\binom{m}{i} = 0$ if $i < 0$ or $i > m$. Then
\begin{multline}
\chi^u(\chi^{e_1} - \chi^{e_2})^m = 
\sum\limits_{i+l=m} \begin{pmatrix}m\\i\end{pmatrix}(-1)^i\chi^{u+ie_2+le_1} = 
\\
= \sum\limits_{i=0}^m \left(\begin{pmatrix}m-1\\i\end{pmatrix} + \begin{pmatrix}m-1\\i-1\end{pmatrix}\right)(-1)^i\chi^{u+ie_2+(m-i)e_1} = 
\\
= \sum\limits_{j=0}^{m-1} \begin{pmatrix}m-1\\j\end{pmatrix}\left((-1)^j\chi^{u+je_2+(m-j)e_1} + (-1)^{j+1}\chi^{u+(j+1)e_2+(m-j-1)e_1}\right),
\end{multline}
where any summand equals $0$ by induction hypothesis for $u+je_2+(m-j-1)e_1$. 

Moreover, for any $u \in S_\sigma$ with $\langle p, u\rangle > 1$ and $x \in \overline{O_\rho}$, we have $\chi^{u+e_1}(x) = 0 = \chi^{u+e_2}(x)$ since $\chi^v(x) = 0$ if $\langle p, v\rangle \ne 0$. Thus, we need only equations $\chi^{u+e_1}-\chi^{u+e_2}=0$ for ${\langle p,u\rangle=1}$. 
\end{proof}

\begin{corollary}
\label{dimZ_cor}
In the notation of Theorem~\ref{classif_theor}, we have $\dim Z(X) = \dim X - 2$ if $X$ is noncommutative. 
\end{corollary}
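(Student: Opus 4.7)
The plan is to use the explicit description of $Z(X)$ from Proposition~\ref{centerX_prop} and pass to the open dense torus orbit $O_\rho \subseteq \overline{O_\rho}$, where all the defining equations collapse to a single nontrivial one.

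First, by Proposition~\ref{centerX_prop}, I would write $Z(X) = \overline{O_\rho} \cap V$, where $V$ is the closed subvariety of $X$ defined by the equations $\chi^{u+e_1} = \chi^{u+e_2}$ for $u \in S_\sigma$ with $\langle p, u\rangle = 1$. Since $\rho$ is a one-dimensional face of~$\sigma$, we have $\dim \overline{O_\rho} = n - 1$, where $n = \dim X$. I would also verify that the index set $\{u \in S_\sigma : \langle p, u\rangle = 1\}$ is nonempty, by combining the Demazure root $-e_1$ (which satisfies $\langle p, -e_1\rangle = 1$ but may fall outside $\sigma^\vee$) with a sufficiently deep lattice point in the relative interior of the facet $\rho^\perp \cap \sigma^\vee$, so that the sum lies in $S_\sigma$.

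Next, I would restrict attention to the open torus $O_\rho \subseteq \overline{O_\rho}$. It is an $(n-1)$-dimensional torus with character lattice $M(\rho) = \rho^\perp \cap M$, and for each admissible $u$ both $u+e_1$ and $u+e_2$ lie in $M(\rho)$, because $\langle p, u+e_k\rangle = 0$. Hence the characters $\chi^{u+e_1}, \chi^{u+e_2}$ are invertible on $O_\rho$, and each equation $\chi^{u+e_1} = \chi^{u+e_2}$ reduces to the single equation $\chi^{e_1-e_2} = 1$, independently of~$u$. Since $e_1 \ne e_2$, the character $\chi^{e_1-e_2}$ is nontrivial on $O_\rho$, so its $1$-level set is a proper closed codimension-one subgroup of $O_\rho$, of dimension $n-2$. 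Thus $Z(X) \cap O_\rho$ has dimension exactly $n-2$.

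Finally, I would combine both bounds. The lower bound $\dim Z(X) \ge n-2$ follows because $Z(X) \supseteq \overline{Z(X)\cap O_\rho}$, whose dimension is $n-2$. For the upper bound, the previous step shows $Z(X)\cap O_\rho$ is a proper subset of the open dense orbit $O_\rho$, so $Z(X)$ cannot contain the irreducible variety $\overline{O_\rho}$; being a proper closed subset of this $(n-1)$-dimensional irreducible variety, it has dimension at most $n-2$. I do not anticipate a serious obstacle here: the main point is simply that once restricted to $O_\rho$ the entire family of equations collapses to one, and the transition from $O_\rho$ to $\overline{O_\rho}$ gives the global dimension estimate automatically because $O_\rho$ is open and dense.
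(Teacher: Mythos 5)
Your proof is correct and follows essentially the same route as the paper: both deduce $Z(X)\subseteq\overline{O_\rho}$ from Proposition~\ref{centerX_prop}, identify $Z(X)\cap O_\rho$ as the codimension-one subtorus $\{\chi^{e_1-e_2}=1\}$ of dimension $\dim X-2$ (the paper does this via Lemma~\ref{centerG_lem}, since this set is exactly $Z(G_\chi)=\{0\}\times\Ker\chi$ with $\chi=\chi^{e_2-e_1}$), and then bound all remaining components inside the proper closed subset $\overline{O_\rho}\setminus O_\rho$. The only cosmetic difference is that you recover the single equation $\chi^{e_1-e_2}=1$ directly from the defining equations of Proposition~\ref{centerX_prop} rather than from the group-theoretic description of $Z(G_\chi)$.
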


\begin{proof}
By Theorem~\ref{classif_theor}, the set of invertible elements in~$X$ equals the open subset $X_\rho = G_\chi$. The intersection $Z(X) \cap X_\rho$ is the center $Z(G_\chi)$ of the group of invertible elements, which is isomorphic to $\Ker \chi$ for the character~$\chi$ of the torus of dimension $\dim X-1$ by Lemma~\ref{centerG_lem}. So there exists the irreducible component $\overline{Z(G_\chi)}$ of $Z(X)$ of dimension $\dim X - 2$. 

By Proposition~\ref{centerX_prop}, we have $Z(X) \subseteq \overline{O_\rho}$. Since $X_\rho = O_\rho \cup O_0$, any other irreducible components of $Z(X)$ is a subset of $\overline{O_\rho} \setminus O_\rho$ of dimension $\dim O_\rho - 1 = \dim X - 2$, see subsection~\ref{Torb_subsec}. This completes the proof. 
\end{proof}

\begin{corollary}
\label{centerAn_cor}
In the notation of Theorem~\ref{affspace_theor}, the center of the monoid on~$\AA^n$ equals
\[Z(\AA^n) = \{x_n = 0, \;\; x_1^{a_1} \ldots x_{n-1}^{a_{n-1}} = x_1^{b_1} \ldots x_{n-1}^{b_{n-1}}\}\]
if the monoid is noncommutative. In particular, all irreducible components of $Z(\AA^n)$ are of dimension~$n-2$. 
\end{corollary}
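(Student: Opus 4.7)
The plan is to specialize Proposition~\ref{centerX_prop} to the toric data describing the monoid structures on $\AA^n$ from Theorem~\ref{affspace_theor} and then verify the dimension claim by elementary means. Recall from the proof of Theorem~\ref{affspace_theor} that $\sigma \subseteq N_\QQ$ and $\sigma^\vee \subseteq M_\QQ$ are the positive octants, $p = (0, \ldots, 0, 1) \in N$ is the primitive vector of the chosen ray~$\rho$, and the two Demazure roots are $e_1 = (b_1, \ldots, b_{n-1}, -1)$ and $e_2 = (a_1, \ldots, a_{n-1}, -1)$.

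First I would identify $\overline{O_\rho}$ in concrete coordinates. Since $\rho^\perp \cap \sigma^\vee = \cone(u_1, \ldots, u_{n-1})$, the orbit description from subsection~\ref{Torb_subsec} gives $\overline{O_\rho} = \{x_n = 0\}$. Next I would invoke Remark~\ref{centereq_rem} to reduce the family of equations supplied by Proposition~\ref{centerX_prop} to a single one: every $u \in S_\sigma$ with $\langle p, u\rangle = 1$ has the form $u_n + (c_1, \ldots, c_{n-1}, 0)$ with $(c_1, \ldots, c_{n-1}, 0) \in S_\sigma$, so $\{u \in S_\sigma : \langle p, u\rangle = 1\} \subseteq u_n + S_\sigma$ and $u = u_n$ alone suffices. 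Direct computation then gives $u_n + e_1 = (b_1, \ldots, b_{n-1}, 0)$ and $u_n + e_2 = (a_1, \ldots, a_{n-1}, 0)$, which yields the equation $x_1^{b_1} \cdots x_{n-1}^{b_{n-1}} = x_1^{a_1} \cdots x_{n-1}^{a_{n-1}}$ and hence the displayed description of $Z(\AA^n)$.

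For the dimension claim, I would observe that $x_n$ does not appear in the binomial equation, so under the identification $\{x_n = 0\} \cong \AA^{n-1}$ the center $Z(\AA^n)$ is the closed subscheme of $\AA^{n-1}$ cut out by $V = \{x_1^{a_1} \cdots x_{n-1}^{a_{n-1}} = x_1^{b_1} \cdots x_{n-1}^{b_{n-1}}\}$. Noncommutativity of the monoid forces $e_1 \ne e_2$, hence $(a_1, \ldots, a_{n-1}) \ne (b_1, \ldots, b_{n-1})$, and the defining binomial is a nonzero element of $\KK[x_1, \ldots, x_{n-1}]$. Krull's Hauptidealsatz then implies that $V$ has pure codimension one in $\AA^{n-1}$, so every irreducible component of $Z(\AA^n)$ has dimension~$n-2$.

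I do not anticipate a genuine obstacle: the whole argument is a direct substitution into Proposition~\ref{centerX_prop}. The only steps requiring attention are confirming that the single equation for $u = u_n$ already generates all equations coming from $\{u \in S_\sigma : \langle p, u\rangle = 1\}$ via Remark~\ref{centereq_rem}, and checking that the resulting binomial is nonzero exactly when the monoid is noncommutative so that pure-codimension-one applies.
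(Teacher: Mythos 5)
Your proposal is correct and follows essentially the same route as the paper: specialize Proposition~\ref{centerX_prop} via Remark~\ref{centereq_rem} using the inclusion $\{u \in S_\sigma : \langle p,u\rangle = 1\} \subseteq u_n + S_\sigma$, which reduces everything to the single binomial equation $\chi^{u_n+e_1}=\chi^{u_n+e_2}$. Your explicit justification of the dimension claim (noncommutativity gives a nonzero, nonconstant binomial, then Krull's Hauptidealsatz gives pure codimension one in $\{x_n=0\}\cong\AA^{n-1}$) is a welcome addition that the paper leaves implicit.
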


\begin{proof}
We use the notation of the proof of Theorem~\ref{affspace_theor}. Notice that for the basis vector $u_n = (0,\ldots,0,1)\in M$ and the primitive vector $p=(0,\ldots,0,1) \in N$ on the ray~$\rho$, we have the inclusion $\{u \in S_\sigma: \langle p,u\rangle = 1\} \subseteq u_n + S_\sigma$. So, according to Proposition~\ref{centerX_prop} and Remark~\ref{centereq_rem}, the center $Z(\AA^n)$ in $\overline{O_\rho} = \{x_n = 0\}$ is given by the equation $\chi^{u_n+e_1} = \chi^{u_n+e_2}$, where $e_1 = (b_1, \ldots, b_{n-1}, -1)$, $e_2 = (a_1, \ldots, a_{n-1}, -1)$. This gives the required formula. 
\end{proof}

\begin{example}
\label{dim3center_example}
Let us provide an example of a monoid such that irreducible components of the center have different dimensions. 

Consider the monoid structure on $X=\{vw=zt\}\subseteq \AA^4$ from Example~\ref{mult_dim3_example} for Demazure roots $e_1 = (-1,0,1)$ and $e_2 = (-1,1,2)$. Let us find the center using Proposition~\ref{centerX_prop}. By definition, 
\[S_\sigma = \sigma^\vee \cap M = \{(a,b,c) \in M \mid a, \, b, \, a+c, \, b+c \ge 0\},\]
see Figure~\ref{dim3pict}. Substituting $a = 0$ and $a = 1$, we obtain two sections $\{u \in S_\sigma: \langle p,u\rangle = 0\}$ and $\{u \in S_\sigma: \langle p,u\rangle = 1\}$, see Figure~\ref{dim3center_pict}. One can see that the latter one is a subset of $(u_1+S_\sigma) \cup (u_2+S_\sigma)$ for $u_1 = (1,0,0)$ and $u_2 = (1,1,-1)$. Then, according to Remark~\ref{centereq_rem}, the center is defined by 
\[Z(X) = \overline{O_\rho} \cap \{\chi^{u_i+e_1} = \chi^{u_i+e_2}, \; i = 1,2\}.\]

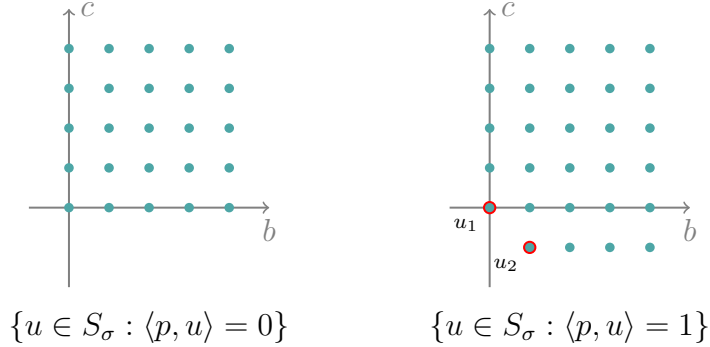
\begin{figure}[h]
\begin{center}
\tikzset{every picture/.style={line width=0.75pt}} %set default line width to 0.75pt        

\begin{tikzpicture}[x=0.75pt,y=0.75pt,yscale=-1,xscale=1]
%uncomment if require: \path (0,300); %set diagram left start at 0, and has height of 300
    \tikzstyle{conefill} = [fill=blue!20, draw = black!40]
    \tikzstyle{conefill_gamma} = [fill=red!20, draw = black!70]

\coordinate (e1) at (20,0);
\coordinate (e2) at (0,-20);

%%%%%%%%%%%%%%%%% pict in <p,u>=0 %%%%%%%%%%%%

\coordinate (O) at (20,230);
\coordinate (Oxmax) at ($(O)+5*(e1)$);
\coordinate (Oymax) at ($(O)+5*(e2)$);
\coordinate (Oxmin) at ($(O)-(e1)$);
\coordinate (Oymin) at ($(O)-2*(e2)$);
\coordinate (T) at ($(O)-3*(e2)+2*(e1)$);

\draw [->,color=black!50] (Oxmin) -- (Oxmax) node[below] {$b$};
\draw [->,color=black!50] (Oymin) -- (Oymax) node[right] {$c$};

\foreach \x in {0,...,4}
    \foreach \y in {0,...,4}
    {
    \node[draw=teal!70,circle,inner sep=1pt,fill=teal!70] at ($(O)+\x*(e1)+\y*(e2)$) {};
    }

\draw[black] (T) node {$\{u\in S_\sigma: \langle p,u\rangle=0\}$};

%%%%%%%%%%%%%%%%% pict in <p,u>=0 %%%%%%%%%%%%

\coordinate (O) at (230,230);
\coordinate (Oxmax) at ($(O)+5*(e1)$);
\coordinate (Oymax) at ($(O)+5*(e2)$);
\coordinate (Oxmin) at ($(O)-(e1)$);
\coordinate (Oymin) at ($(O)-2*(e2)$);
\coordinate (T) at ($(O)-3*(e2)+2*(e1)$);

\draw [->,color=black!50] (Oxmin) -- (Oxmax) node[below] {$b$};
\draw [->,color=black!50] (Oymin) -- (Oymax) node[right] {$c$};

\foreach \x in {0,...,4}
    \foreach \y in {0,...,4}
    {
    \node[draw=teal!70,circle,inner sep=1pt,fill=teal!70] at ($(O)+\x*(e1)+\y*(e2)$) {};
    }
\foreach \x in {1,...,4}
{
    \node[draw=teal!70,circle,inner sep=1pt,fill=teal!70] at ($(O)+\x*(e1)-(e2)$) {};
}
\node[below left] at ($(O)$) {\tiny $u_1$};
\node[below left] at ($(O)-(e2)+(e1)$) {\tiny $u_2$};
\node[draw=red,circle,inner sep=1.5pt,fill=teal!70] at ($(O)$) {};
\node[draw=red,circle,inner sep=1.5pt,fill=teal!70] at ($(O)-(e2)+(e1)$) {};

\draw[black] (T) node {$\{u\in S_\sigma: \langle p,u\rangle=1\}$};

\end{tikzpicture}
\end{center}
\caption{The semigroup $S_\sigma$ in Example~\ref{dim3center_example} in $\{a=0\}$ and $\{a=1\}$.}
\label{dim3center_pict}
\end{figure}

Recall that $X = \{vw=zt\}$ is a quadratic cone of dimension~$3$, where $v=\chi^{(1,0,0)}$, $w=\chi^{(0,1,0)}$, $z=\chi^{(0,0,1)}$, and $t=\chi^{(1,1,-1)}$. Then
\[x = (v,w,z,t) \in Z(X) \;\Leftrightarrow\;
\left\{\begin{aligned}
\chi^u(x) = 0 &\;\text{ for any }\; u \in S_\sigma, \langle p,u\rangle > 0,\\
\chi^{(0,0,1)}(x) &= \chi^{(0,1,2)}(x),\\
\chi^{(0,1,0)}(x) &= \chi^{(0,2,1)}(x),
\end{aligned}\right. \;\Leftrightarrow\;
\left\{\begin{aligned}
v &= t = 0,\\
z &= wz^2,\\
w & = w^2z,
\end{aligned} 
\right.\]
whence $Z(X)$ consists of the hyperbola $\{v=t=0, \, wz=1\}$ of dimension~$1$ and the point $(0,0,0,0)$ of dimension~$0$. 
\end{example}

\section{Interplay between the center and idempotents} 

In this section, we study the connection of the center with the set of idempotents. 

Consider the action of the group of invertible elements $G_\chi$ of $X$ on $X$ by conjugation: $G_\chi \times X \to X$, $(g, x) \mapsto g * x * g^{-1}$. We call the orbits of this action \emph{conjugacy classes} of~$X$. 

\begin{proposition}
\label{idempcenter_prop}
In the notation of Theorem~\ref{classif_theor}, let $X = X_\sigma$ be an affine monoid of corank one. Then any irreducible component of $E(X)$ is a conjugacy class of $X$. 
In particular, irreducible components of $E(X)$ that are isomorphic to the affine line do not intersect $Z(X)$, and isolated points in $E(X)$ belong to $Z(X)$. 
\end{proposition}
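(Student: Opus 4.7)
The plan is to show each irreducible component of $E(X)$ is a single conjugacy class; the ``in particular'' claims then follow mechanically, because $x \in Z(X)$ iff its conjugacy class is $\{x\}$ (as $G_\chi$ is dense in $X$), so a one-dimensional conjugacy class avoids $Z(X)$ while a zero-dimensional one lies in $Z(X)$. Note immediately that conjugation preserves the relation $x*x=x$, so $G_\chi$ acts on $E(X)$ by conjugation; and since $G_\chi$ is connected it fixes every irreducible component of $E(X)$ setwise, so every conjugacy class sits inside a single component.

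For an affine-line component $\overline{E_\gamma}$ (case~(d) of Theorem~\ref{idemp_theor}; WLOG $e_1 \in \gamma^\perp$ and $e_2 \notin \gamma^\perp$), the key is to decompose conjugation by $\alpha \in \GG_a \subseteq G_\chi$ as $L_\alpha \circ R_{-\alpha}$, where $L_\alpha$ and $R_\alpha$ are the left and right multiplications by $\alpha$. By the derivations-and-Demazure-roots step in the proof of Theorem~\ref{classif_theor}, these are exactly the one-parameter unipotent subgroups $H_{e_1}$ and $H_{e_2}$ of $\Aut(X_\sigma)$ normalized by $\TT$. Applying the criterion of~\cite[Lemma 2.2]{AKZ2012} recalled in the proof of Proposition~\ref{idemp_geom_prop}, no $H_{e_2}$-connected pair of $\TT$-orbits contains $O_\gamma$ (the required condition $e_2 \in \gamma^\perp$ fails), so by~\cite[Proposition~2.1]{AKZ2012} every $H_{e_2}$-orbit meeting $O_\gamma$ is a singleton. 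Hence $O_\gamma$ lies in the fixed-point set of $H_{e_2}$, which is closed in $X$ and therefore contains $\overline{E_\gamma}$. On $\overline{E_\gamma}$ conjugation by $\GG_a$ thus reduces to the transitive action of $L_\alpha = H_{e_1}(\alpha)$, because $\overline{E_\gamma}$ is itself an $H_{e_1}$-orbit by the proof of Proposition~\ref{idemp_geom_prop}(a). So $\overline{E_\gamma}$ is a single conjugacy class.

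For an isolated point $x$ of $E(X)$, Proposition~\ref{idemp_geom_prop} forces $x = x_{\gamma'}$ for a face $\gamma'$ containing $\rho$ as a ray, whose facet $\gamma$ opposite to $\rho$ must fall into case~(b) or~(c) of Theorem~\ref{idemp_theor} (otherwise $x_{\gamma'}$ would be the added point of an affine-line component and not isolated). In case~(b) neither $e_1$ nor $e_2$ lies in $\gamma^\perp$, in case~(c) both do; either way $u + e_1 \in (\gamma')^\perp \Leftrightarrow u + e_2 \in (\gamma')^\perp$ for every $u \in S_\sigma$ with $\langle p, u\rangle = 1$, whence the equality $\chi^{u+e_1}(x_{\gamma'}) = \chi^{u+e_2}(x_{\gamma'})$ follows from the distinguished-point formula. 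Together with the obvious $x_{\gamma'} \in \overline{O_\rho}$, Proposition~\ref{centerX_prop} places $x_{\gamma'}$ in $Z(X)$, so its conjugacy class is $\{x_{\gamma'}\}$.

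The step I expect to be most delicate is the upgrade from ``no $H_{e_2}$-connected pair involves $O_\gamma$'' to ``$H_{e_2}$ fixes $\overline{E_\gamma}$ pointwise''. This hinges on the orbit dichotomy of~\cite[Proposition~2.1]{AKZ2012} together with the fact that fixed-point loci of unipotent group actions on affine varieties are closed, which is what allows the pointwise fixing to propagate from $O_\gamma$ to the added point $x_{\cone(\gamma,\rho)}$ in $\overline{E_\gamma}$. Once that is in place, the remaining verifications are routine combinatorial bookkeeping with supporting hyperplanes of $\sigma$ and the distinguished-point formula.
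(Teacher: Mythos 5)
Your proof is correct, and for the one-dimensional components it takes a genuinely different route from the paper. The paper handles a line component $E'$ indirectly: it uses the explicit equations for $Z(X)$ from Proposition~\ref{centerX_prop} to check that every point of $E'$ has a nontrivial conjugacy class (picking $u \in \cone(\gamma,\rho)^\perp$ with $\langle p,u\rangle=1$ and observing $\chi^{u+e_1}(x)=1 \ne 0=\chi^{u+e_2}(x)$), and then argues that $\AA^1$ cannot be partitioned into more than one disjoint locally closed positive-dimensional orbit. You instead exhibit the transitivity of conjugation directly, via the decomposition of conjugation by $\alpha\in\GG_a$ into $L_\alpha\circ R_{-\alpha}$: the criterion of \cite[Lemma~2.2]{AKZ2012} (note that for $O_\gamma$ appearing as the \emph{smaller} orbit of an $H_{e_2}$-connected pair, the condition $\langle\tau_2,e_2\rangle\le 0$ together with the Demazure-root inequalities again forces $e_2\in\gamma^\perp$, so your single parenthetical reason does cover both positions) shows $H_{e_2}$ fixes $O_\gamma$, hence the closed set $X^{H_{e_2}}$ contains $\overline{E_\gamma}$, and conjugation restricts to the simply transitive $H_{e_1}$-action on the orbit $\overline{E_\gamma}$. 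Your argument is more geometric and does not need Proposition~\ref{centerX_prop} at all for the main claim; the paper's is more computational but reuses machinery it has already set up. One remark: your third paragraph is redundant --- the mechanical argument of your first paragraph already places every isolated idempotent in $Z(X)$, since its conjugacy class is trapped in the point component --- and it is also the one shaky spot: a face $\gamma'$ containing $\rho$ need not possess a facet $\gamma$ with $\gamma'=\cone(\gamma,\rho)$ (e.g.\ for non-simplicial $\gamma'$), so the case-(b)/(c) dichotomy you invoke there is not always available. Since that paragraph can simply be deleted, this does not affect the validity of the proof.
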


\begin{proof}
If $x \in X$ is an idempotent, then $g*x*g^{-1}$ is an idempotent as well. So $E(X)$ is a union of conjugacy classes. Notice also that any conjugacy class is irreducible since the group $G_\chi$ is connected. 

By Proposition~\ref{idemp_geom_prop}(b), irreducible components of $E(X)$ do not intersect and any irreducible component of $E(X)$ is either an isolated point or isomorphic to the affine line. So any isolated point in $E(X)$ belongs to $Z(X)$. By definition, $x \in Z(X)$ if and only if its conjugacy class is trivial, so it remains to prove that any irreducible component $E'$ of $E(X)$ such that $E' \cong \AA^1$ is one conjugacy class. 

Consider a point $x \in E'$. Let us show that $x \notin Z(X)$. According to the proof of Proposition~\ref{idemp_geom_prop}, $x \in \overline{E_\gamma}$ for the face $\gamma$ from item (d) of Theorem~\ref{idemp_theor}. Since $x \in \overline{O_\gamma}$ we have $\chi^u(x) = 0$ for $u \notin \gamma^\perp$, and Theorem~\ref{idemp_theor} implies $\chi^u(x) = 1$ for $u \in \cone(\gamma, \rho)^\perp$, where $u \in S_\sigma$. 
We can assume that $e_1 \in \gamma^\perp$ and $e_2 \notin \gamma^\perp$. Consider $u \in \cone(\gamma, \rho)^\perp = \gamma^\perp \cap \rho^\perp \cap \sigma^\vee$ such that $\langle p,u\rangle = 1$. Then $\chi^{u+e_1}(x) = 1$ and $\chi^{u+e_2}(x) = 0$ by the above. So $x \notin Z(X)$ according to Proposition~\ref{centerX_prop}. 

Thus, points in $E' \cong \AA^1$ have nontrivial conjugacy classes, which do not intersect, are locally closed as orbits, and are subsets of~$E'$. It is possible only if $E'$ is one conjugacy class. 
\end{proof}

\begin{example}
\label{centerA2_example}
Let $X = \AA^2$ and the multiplication be $(x_1, x_2) * (y_1, y_2) = (x_1y_1, \, x_1^ay_2 + x_2)$, where $a > 0$, see Example~\ref{idempA2_example} and Figure~\ref{A2pict}. 
By Corollary~\ref{centerAn_cor}, we have \[Z(X) = \{x_2 = 0, \, x_1^a = 1\}.\] So the center $Z(X)$ consists of $a$ points $(\xi_i, 0)$, where $\{\xi_1, \ldots, \xi_a\}=\sqrt[a]{1}$. 

This agrees with Proposition~\ref{idempcenter_prop}: the isolated idempotent $(1,0)$ belongs to $Z(X)$, and the line of idempotents $(0,x_2)$, $x_2\in \KK$, does not intersect $Z(X)$. 
\end{example}

\begin{example}
\label{A4center_example}
Let $X = \AA^4$ and the multiplication be given by formula~\eqref{mult_A4ex_eq}, see Example~\ref{idempA4_example}. 
According to Corollary~\ref{centerAn_cor}, we have
\[Z(X) = \{x_4 = 0, \; x_2^bx_3^c = x_3^a\}.\]

Let us find the irreducible components of $Z(X)$, the closure $\overline{Z(G_\chi)}$ of the center of the group of invertible elements, and the set of idempotents $E(X)$. It is clear that $\overline{Z(G_\chi)}$ is a subvariety of $Z(X)$ and the unity is the unique idempotent in~$Z(G_\chi)$. We show that in our example $\overline{Z(G_\chi)} \ne Z(X)$, and idempotents can belong to different irreducible components depending on parameters. 

Denote $d = \gcd(b,a-c)$ and $\sqrt[d]{1} = \{\xi_1, \ldots, \xi_d\}$. Then $Z(X)$ has $d+1$ two-dimensional irreducible components; their equations depend on $a,b,c$, see Table~\ref{A4center_table}. 
\begin{table}[h]
\[\begin{array}{c|c|c|c}
& \text{case } c < a & \text{case } c = a & \text{case } c > a
\\\hline
&
\{x_3 = x_4 = 0\} \,\cup 
& \{x_3 = x_4 = 0\} \,\cup 
& \{x_3 = x_4 = 0\} \,\cup 
\\
Z(X) &
\cup\, \{x_2^\frac{b}{d}=\xi_1x_3^\frac{a-c}{d}\!, x_4 = 0\} \,\cup
& \cup\, \{x_2=\xi_1, x_4 = 0\} \,\cup 
& \cup\, \{x_2^\frac{b}{d}x_3^\frac{c-a}{d}\!=\xi_1, x_4 = 0\} \,\cup
\\
 &
\ldots & \ldots & \ldots
\\
& 
\cup\, \{x_2^\frac{b}{d}=\xi_dx_3^\frac{a-c}{d}\!, x_4 = 0\}
& \cup\, \{x_2=\xi_d, x_4 = 0\}
& \cup\, \{x_2^\frac{b}{d}x_3^\frac{c-a}{d}\!=\xi_d, x_4 = 0\}
\end{array}\]
\label{A4center_table}
\caption{Irreducible components of $Z(X)$ in Example~\ref{A4center_example}.}
\end{table}

The set of invertible elements equals $G_\chi = \{x_1, x_2, x_3 \ne 0\}$ and $Z(G_\chi) = Z(X) \cap G_\chi$. We call the plane $\{x_3 = x_4 = 0\}$ the external component of $Z(X)$ as it does not intersect~$G_\chi$. 

\begin{figure}[h]
\begin{center}
\tikzset{every picture/.style={line width=0.75pt}} %set default line width to 0.75pt        

\begin{tikzpicture}[x=0.75pt,y=0.75pt,yscale=-1,xscale=1]
%uncomment if require: \path (0,300); %set diagram left start at 0, and has height of 300
    \tikzstyle{bluefill} = [fill=blue!20, draw = blue!80,opacity=0.85]
    \tikzstyle{violetfill} = [fill=violet!40, draw = violet!80,opacity=0.85]
    \tikzstyle{tealfill} = [fill=teal!15, draw = teal!80,opacity=0.85]
%    \tikzstyle{conefill_gamma} = [fill=red!20, draw = black!70]

\coordinate (e1) at (-10,37);
\coordinate (e2) at (40,0);
\coordinate (e3) at (0,-40);

%%%%%%%%%%%%%%%%% case c<a %%%%%%%%%%%%

\coordinate (O) at (120,230);
\coordinate (Oxmax) at ($(O)+2*(e1)$);
\coordinate (Oymax) at ($(O)+2*(e2)$);
\coordinate (Ozmax) at ($(O)+2*(e3)$);
\coordinate (Oxmin) at ($(O)-1.5*(e1)$);
\coordinate (Oymin) at ($(O)-2*(e2)$);
\coordinate (Ozmin) at ($(O)-2.5*(e3)$);
\coordinate (A) at ($(O)  +1.5*(e1)$);
\coordinate (D) at ($(O)-0.5*(e1)$);

\coordinate (T) at ($(O)+1.5*(e1)-2*(e3)$);
\node at (T) {$b=4, \, a-c=6$};

\coordinate (B) at ($(A)-1.5*(e2)-1.3*(e3)$);
\coordinate (C) at ($(D)-1.5*(e2)-1.3*(e3)$);
\filldraw [violetfill] 
(O) -- (A)  .. controls +(-5,40) and +(3,-2) .. (B) -- (C) .. controls +(3,-2) and +(-5,40) .. (D) -- cycle;

\draw [->,color=black!50] (Oxmin) -- (Oxmax) node[below] {\scriptsize $x_1$};
\draw [color=black!50] (Ozmin) -- (O);
\draw[very thick] (O) .. controls +(-5,40) and +(3,-2) .. ($(O)-1.5*(e2)-1.3*(e3)$);

\coordinate (B) at ($(A)+1.5*(e2)-1.3*(e3)$);
\coordinate (C) at ($(D)+1.5*(e2)-1.3*(e3)$);
\filldraw [violetfill] 
(O) -- (A)  .. controls +(5,40) and +(-3,-2) .. (B) -- (C) node[right,violet] {\tiny $x_2^2=-x_3^3$} .. controls +(-3,-2) and +(5,40) .. (D) -- cycle;
\draw[very thick] (O) .. controls +(5,40) and +(-3,-2) .. ($(O)+1.5*(e2)-1.3*(e3)$);

\coordinate (B) at ($(A)-1.5*(e2)$);
\coordinate (C) at ($(D)-1.5*(e2)$);
\filldraw [tealfill] (O) -- (A) -- (B) -- (C) -- (D) -- cycle;
\coordinate (B) at ($(A)+1.5*(e2)$);
\coordinate (C) at ($(D)+1.5*(e2)$);
\filldraw [tealfill] (O) -- (A) -- (B) -- (C) node[right,teal] {\tiny $x_3=0$} -- (D) -- cycle;

\draw [->,color=black!50] (Oymin) -- (Oymax) node[below] {\scriptsize $x_2$};
%\draw[very thick] ($(O)-1.5*(e2)$) -- ($(O)+1.5*(e2)$);

\coordinate (B) at ($(A)-1.5*(e2)+1.3*(e3)$);
\coordinate (C) at ($(D)-1.5*(e2)+1.3*(e3)$);
\filldraw [bluefill] 
(O) -- (A)  .. controls +(-5,-40) and +(3,2) .. (B) -- (C) .. controls +(3,2) and +(-5,-40) .. (D) -- cycle;
\draw[very thick] (O) .. controls +(-5,-40) and +(3,2) .. ($(O)-1.5*(e2)+1.3*(e3)$);

\draw [->,color=black!50] (O) -- (Ozmax) node[left] {\scriptsize $x_3$};
\draw[very thick] ($(D)-0.02*(e2)$) -- ($(A)-0.02*(e2)$);

\coordinate (B) at ($(A)+1.5*(e2)+1.3*(e3)$);
\coordinate (C) at ($(D)+1.5*(e2)+1.3*(e3)$);
\filldraw [bluefill] 
(A)  .. controls +(5,-40) and +(-3,2) .. (B) -- (C) node[right,blue] {\tiny $x_2^2=x_3^3$} .. controls +(-3,2) and +(5,-40) .. (D);
\draw[very thick] (O) .. controls +(5,-40) and +(-3,2) .. ($(O)+1.5*(e2)+1.3*(e3)$);

\coordinate (e11) at ($0.9*(e1)$);
\coordinate (e22) at ($(e2)$);
\coordinate (e33) at ($1.12*(e3)$);
\node[draw=black,diamond,inner sep=1pt,fill=red] at (O) {};
\node[draw=black,diamond,inner sep=1pt,fill=red] at ($(O)+(e11)$) {};
\node[draw=teal!50,circle,inner sep=1pt,fill=red!50] at ($(O)+(e22)$) {};
\node[draw=red,circle,inner sep=1pt] at ($(O)+(e33)$) {};
\node[draw=teal,circle,inner sep=1pt,fill=red] at ($(O)+(e11)+(e22)$) {};
\node[draw=black,diamond,inner sep=1pt,fill=red] at ($(O)+(e22)+(e33)$) {};
\node[draw=red,circle,inner sep=1pt] at ($(O)+(e11)+(e33)$) {};
\node[draw=blue,star,inner sep=1pt,fill=red] at ($(O)+(e11)+(e22)+(e33)$) {};

%%%%%%%%%%%%%%%%% case c=a %%%%%%%%%%%%

\coordinate (O) at ($(O)+5*(e2)$);

\coordinate (Oxmax) at ($(O)+2*(e1)$);
\coordinate (Oymax) at ($(O)+2*(e2)$);
\coordinate (Ozmax) at ($(O)+2*(e3)$);
\coordinate (Oxmin) at ($(O)-1.5*(e1)$);
\coordinate (Oymin) at ($(O)-2*(e2)$);
\coordinate (Ozmin) at ($(O)-2.5*(e3)$);

\coordinate (M) at ($(O)-1*(e2)$);
\coordinate (A) at ($(M)+1.5*(e1)$);
\coordinate (D) at ($(M)-0.5*(e1)$);
\coordinate (B) at ($(A)-1.3*(e3)$);
\coordinate (C) at ($(D)-1.3*(e3)$);

\coordinate (T) at ($(O)+1.5*(e1)-2*(e3)$);
\node at (T) {$b=2, \, c = a$};

\filldraw [violetfill] 
(A)  -- (B) node[right,violet] {\tiny $x_2=-1$} -- (C) -- (D) -- cycle;
\draw[very thick] ($(M)-1.3*(e3)$) -- (M);

\draw [->,color=black!50] (Oxmin) -- (Oxmax) node[below] {\scriptsize $x_1$};
\draw [color=black!50] (Ozmin) -- (O);

\coordinate (M) at ($(O)+1*(e2)$);
\coordinate (A) at ($(M)+1.5*(e1)$);
\coordinate (D) at ($(M)-0.5*(e1)$);
\coordinate (B) at ($(A)-1.3*(e3)$);
\coordinate (C) at ($(D)-1.3*(e3)$);
\filldraw [bluefill] 
(A)  -- (B) -- (C) -- (D) -- cycle;
\draw[very thick] ($(M)-1.3*(e3)$) -- (M);

\coordinate (A) at ($(O)+1.5*(e1)$);
\coordinate (D) at ($(O)-0.5*(e1)$);
\coordinate (B) at ($(A)-1.5*(e2)$);
\coordinate (C) at ($(D)-1.5*(e2)$);
\filldraw [tealfill] (O) -- (A) -- (B) -- (C) -- (D) -- cycle;
\coordinate (B) at ($(A)+1.5*(e2)$);
\coordinate (C) at ($(D)+1.5*(e2)$);
\filldraw [tealfill] (O) -- (A) -- (B) node[right,teal] {\tiny $x_3=0$} -- (C) -- (D) -- cycle;

\draw [->,color=black!50] (Oymin) -- (Oymax) node[below] {\scriptsize $x_2$};

\coordinate (M) at ($(O)-1*(e2)$);
\coordinate (A) at ($(M)+1.5*(e1)$);
\coordinate (D) at ($(M)-0.5*(e1)$);
\coordinate (B) at ($(A)+1.3*(e3)$);
\coordinate (C) at ($(D)+1.3*(e3)$);
\filldraw [violetfill] 
(A)  -- (B) -- (C) -- (D) -- cycle;
\draw[very thick] ($(M)+1.3*(e3)$) -- (M);

\draw [->,color=black!50] (O) -- (Ozmax) node[left] {\scriptsize $x_3$};

\coordinate (M) at ($(O)+1*(e2)$);
\coordinate (A) at ($(M)+1.5*(e1)$);
\coordinate (D) at ($(M)-0.5*(e1)$);
\coordinate (B) at ($(A)+1.3*(e3)$);
\coordinate (C) at ($(D)+1.3*(e3)$);
\filldraw [bluefill] 
(A)  -- (B) -- (C) node[right,blue] {\tiny $x_2=1$} -- (D) -- cycle;
\draw[very thick] ($(M)+1.3*(e3)$) -- (M);

\draw[very thick] ($(O)-1*(e2)-0.5*(e1)$) -- ($(O)-1*(e2)+1.5*(e1)$);
\draw[very thick] ($(O)+1*(e2)-0.5*(e1)$) -- ($(O)+1*(e2)+1.5*(e1)$);

%\coordinate (e1) at ($(e1)$);
%\coordinate (e2) at ($(e2)$);
%\coordinate (e3) at ($(e3)$);
\node[draw=teal,circle,inner sep=1pt,fill=red] at (O) {};
\node[draw=teal,circle,inner sep=1pt,fill=red] at ($(O)+(e1)$) {};
\node[draw=black,diamond,inner sep=1pt,fill=red] at ($(O)+(e2)$) {};
\node[draw=red,circle,inner sep=1pt] at ($(O)+(e3)$) {};
\node[draw=black,diamond,inner sep=1pt,fill=red] at ($(O)+(e1)+(e2)$) {};
\node[draw=black,diamond,inner sep=1pt,fill=red] at ($(O)+(e2)+(e3)$) {};
\node[draw=red,circle,inner sep=1pt] at ($(O)+(e1)+(e3)$) {};
\node[draw=blue,star,inner sep=1pt,fill=red] at ($(O)+(e1)+(e2)+(e3)$) {};

%%%%%%%%%%%%%%%%% case c > a %%%%%%%%%%%%

\coordinate (O) at ($(O)+5*(e2)$);
\coordinate (Oxmax) at ($(O)+2*(e1)$);
\coordinate (Oymax) at ($(O)+2*(e2)$);
\coordinate (Ozmax) at ($(O)+2*(e3)$);
\coordinate (Oxmin) at ($(O)-1.5*(e1)$);
\coordinate (Oymin) at ($(O)-2*(e2)$);
\coordinate (Ozmin) at ($(O)-2.5*(e3)$);
\coordinate (A) at ($(O)  +1.5*(e1)$);
\coordinate (D) at ($(O)-0.5*(e1)$);

\coordinate (T) at ($(O)+1.5*(e1)-2*(e3)$);
\node at (T) {$b=4, \, c-a=6$};

\coordinate (Ado) at ($(O)+1.5*(e1)-1.3*(e3)$);
\coordinate (Ddo) at ($(O)-0.5*(e1)-1.3*(e3)$);
\coordinate (Odo) at ($(O)         -1.3*(e3)$);

\coordinate (A1) at ($(Ado)-0.5*(e2)$);
\coordinate (D1) at ($(Ddo)-0.5*(e2)$);
\coordinate (O1) at ($(Odo)-0.5*(e2)$);
\coordinate (B) at ($(A)-1.5*(e2)-0.5*(e3)$);
\coordinate (C) at ($(D)-1.5*(e2)-0.5*(e3)$);
\filldraw [violetfill] 
(A1)  .. controls +(-5,-30) and +(3,2) .. (B) -- (C) .. controls +(3,2) and +(-5,-30) .. (D1) -- cycle;

\draw [->,color=black!50] (Oxmin) -- (Oxmax) node[below] {\scriptsize $x_1$};
\draw [color=black!50] (Ozmin) -- (O);
\draw[very thick] (O1) .. controls +(-5,-30) and +(3,2) .. ($(O)-1.5*(e2)-0.5*(e3)$);

\coordinate (A1) at ($(Ado)+0.5*(e2)$);
\coordinate (D1) at ($(Ddo)+0.5*(e2)$);
\coordinate (O1) at ($(Odo)+0.5*(e2)$);
\coordinate (B) at ($(A)+1.5*(e2)-0.5*(e3)$);
\coordinate (C) at ($(D)+1.5*(e2)-0.5*(e3)$);
\filldraw [violetfill] 
(A1) node[right,violet] {\tiny $x_2^2x_3^3=-1$} .. controls +(5,-30) and +(-3,2) .. (B) -- (C) .. controls +(-3,2) and +(5,-30) .. (D1) -- cycle;
\draw[very thick] (O1) .. controls +(5,-30) and +(-3,2) .. ($(O)+1.5*(e2)-0.5*(e3)$);

\coordinate (B) at ($(A)-1.5*(e2)$);
\coordinate (C) at ($(D)-1.5*(e2)$);
\filldraw [tealfill] (O) -- (A) -- (B) -- (C) -- (D) -- cycle;
\coordinate (B) at ($(A)+1.5*(e2)$);
\coordinate (C) at ($(D)+1.5*(e2)$);
\filldraw [tealfill] (O) -- (A) -- (B) -- (C) node[right,teal] {\tiny $x_3=0$} -- (D) -- cycle;

\draw [->,color=black!50] (Oymin) -- (Oymax) node[below] {\scriptsize $x_2$};
%\draw[very thick] ($(O)-1.5*(e2)$) -- ($(O)+1.5*(e2)$);

\coordinate (Aup) at ($(O)+1.5*(e1)+1.3*(e3)$);
\coordinate (Dup) at ($(O)-0.5*(e1)+1.3*(e3)$);
\coordinate (Oup) at ($(O)         +1.3*(e3)$);

\coordinate (A1) at ($(Aup)-0.5*(e2)$);
\coordinate (D1) at ($(Dup)-0.5*(e2)$);
\coordinate (O1) at ($(Oup)-0.5*(e2)$);
\coordinate (B) at ($(A)-1.5*(e2)+0.5*(e3)$);
\coordinate (C) at ($(D)-1.5*(e2)+0.5*(e3)$);
\filldraw [bluefill] 
(A1)  .. controls +(-5,30) and +(3,-2) .. (B) -- (C) .. controls +(3,-2) and +(-5,30) .. (D1) -- cycle;
\draw[very thick] (O1) .. controls +(-5,30) and +(3,-2) .. ($(O)-1.5*(e2)+0.5*(e3)$);

\draw [->,color=black!50] (O) -- (Ozmax) node[left] {\scriptsize $x_3$};

\coordinate (A1) at ($(Aup)+0.5*(e2)$);
\coordinate (D1) at ($(Dup)+0.5*(e2)$);
\coordinate (O1) at ($(Oup)+0.5*(e2)$);
\coordinate (B) at ($(A)+1.5*(e2)+0.5*(e3)$);
\coordinate (C) at ($(D)+1.5*(e2)+0.5*(e3)$);
\filldraw [bluefill] 
(A1)  .. controls +(5,30) and +(-3,-2) .. (B) -- (C) .. controls +(-3,-2) and +(5,30) .. (D1) node[right,blue] {\tiny $x_2^2x_3^3=1$} -- cycle;
\draw[very thick] (O1) .. controls +(5,30) and +(-3,-2) .. ($(O)+1.5*(e2)+0.5*(e3)$);

\coordinate (e1) at ($0.9*(e1)$);
\coordinate (e2) at ($0.81*(e2)$);
\coordinate (e3) at ($0.74*(e3)$);
\node[draw=teal,circle,inner sep=1pt,fill=red] at (O) {};
\node[draw=teal,circle,inner sep=1pt,fill=red] at ($(O)+(e1)$) {};
\node[draw=teal!50,circle,inner sep=1pt,fill=red!50] at ($(O)+(e2)$) {};
\node[draw=red,circle,inner sep=1pt] at ($(O)+(e3)$) {};
\node[draw=teal,circle,inner sep=1pt,fill=red] at ($(O)+(e1)+(e2)$) {};
\node[draw=black,diamond,inner sep=1pt,fill=red] at ($(O)+(e2)+(e3)$) {};
\node[draw=red,circle,inner sep=1pt] at ($(O)+(e1)+(e3)$) {};
\node[draw=blue,star,inner sep=1pt,fill=red] at ($(O)+(e1)+(e2)+(e3)$) {};

\end{tikzpicture}
\end{center}
\caption{The center and idempotents in $X$ in Example~\ref{A4center_example} in $\{x_4=0\}$.}
\label{A4center_pict}
\end{figure}
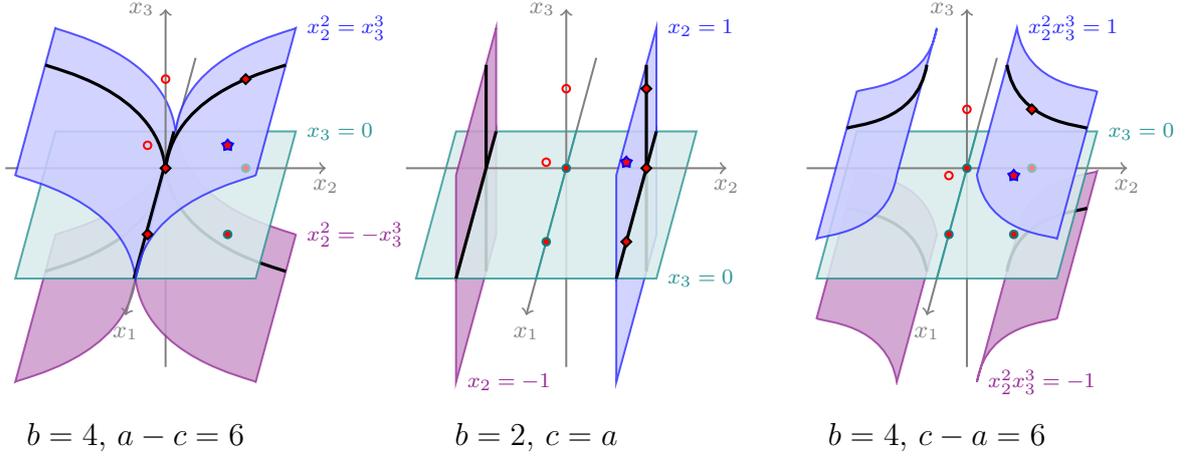
\end{example}

In Figure~\ref{A4center_pict}, one can see the illustration in the subspace $\{x_4 = 0\}$ if $d = 2$; suppose $b=4, \, a-c=\pm 6$ or $b=2, \, a-c=0$. Irreducible components of $Z(X)$ have different colors, the external one is horizontal. Note that $\overline{Z(G_\chi)}$ is the union of non-external irreducible components of $Z(X)$. The external component intersects the non-external ones if $c < a$ or $c = a$. The group center $Z(G_\chi)$ is the union of non-external components without the boundary of $Z(G_\chi)$, which is colored in black. 

Idempotents are figured as small circles, squares and stars. Recall that according to Example~\ref{idempA4_example} the set of idempotents consists of two lines $(0,0,1,x_4)$ and $(1,0,1,x_4)$, $x_4 \in \KK$, and six isolated points 
\[(0, 0, 0, 0), (0, 1, 0, 0), (0, 1, 1, 0), (1, 0, 0, 0), (1, 1, 0, 0), (1, 1, 1, 0).\]
In the subspace $\{x_4 = 0\}$ we see six isolated points that belong to~$Z(X)$ and two points of two lines that do not. This agrees with Proposition~\ref{idempcenter_prop}. The distribution of six isolated points between the irreducible components depends on parameters $a,c$, see Figure~\ref{A4center_pict} and Table~\ref{A4idempcenter_table}. 

\begin{table}[h]
\[\begin{array}{l|c|c|c}
\text{The number of isolated idempotents} & \text{case } c < a & \text{case } c = a & \text{case } c > a
\\\hline
\begin{tikzpicture}\node[draw=blue,star,inner sep=1pt,fill=red,line width=0.75pt] at (0,0) {};\end{tikzpicture} \quad
\text{In } Z(G_\chi) \text{ (the unity)} & 1 & 1 & 1
\\
\begin{tikzpicture}\node[draw=black,diamond,inner sep=1pt,fill=red,line width=0.75pt] at (0,0) {};\end{tikzpicture} \quad
\text{On the boundary of } Z(G_\chi) & 3 & 3 & 1
\\
\begin{tikzpicture}\node[draw=teal,circle,inner sep=1pt,fill=red,line width=0.75pt] at (0,0) {};\end{tikzpicture} \quad
\text{In } Z(X)\setminus \overline{Z(G_\chi)} & 2 & 2 & 4
\end{array}\]
\caption{The interplay of $E(X)$, $Z(X)$, and $Z(G_\chi)$ in Example~\ref{A4center_example}.}
\label{A4idempcenter_table}
\end{table}

%%%%%%%%%%%%%%%%%%%%%%%%%%%%%%%%%%%%%%%%%%%%%%%%%%%%%%%%%%%%%%%%%%%%%%%

%

%\section{Statements and declarations}

%The work is supported by the RSF-DST grant 22-41-02019. The author has no relevant financial or non-financial interests to disclose. No datasets were generated or analyzed during the current study. 

%\medskip

\end{document}